\theoremstyle{plain}
\newtheorem{theorem}{Theorem}[section]
\newtheorem{proposition}[theorem]{Proposition}
\newtheorem{lemma}[theorem]{Lemma}
\newtheorem{corollary}[theorem]{Corollary}
\theoremstyle{definition}
\newtheorem{conjecture}[theorem]{Conjecture}
\newtheorem{definition}[theorem]{Definition}
\newtheorem{example}[theorem]{Example}
\theoremstyle{remark}
\newtheorem{remark}[theorem]{Remark}
\newtheorem{notation}[theorem]{Notation}
\newcommand{\C}{\mathbb{C}}
\newcommand{\Q}{\mathbb{Q}}
\newcommand{\R}{\mathbb{R}}
\newcommand{\Z}{\mathbb{Z}}
\newcommand{\p}{\mathbb{C}P}
\newcommand{\mcal}{\mathcal}
\def\mcal{\mathcal}
\def\frak{\mathfrak}
\newcommand{\ds}{\displaystyle}
\newcommand{\vs}{\vspace}
\newcommand{\hs}{\hspace}
\numberwithin{equation}{section} \numberwithin{table}{section}
\begin{document}                                                                          

\title{Classification of six dimensional monotone symplectic manifolds admitting semifree circle actions II}
\author{Yunhyung Cho}
\address{Department of Mathematics Education, Sungkyunkwan University, Seoul, Republic of Korea. }
\email{yunhyung@skku.edu}

\begin{abstract}
	Let $(M,\omega_M)$ be a six dimensional closed monotone symplectic manifold admitting an effective semifree Hamiltonian $S^1$-action.
	We show that if the maximal and the minimal fixed component are both two dimensional, then $(M,\omega_M)$ is $S^1$-equivariantly symplectomorphic to some 
	K\"{a}hler Fano manifold $(X, \omega_X, J)$ equipped with a certain holomorphic Hamiltonian $S^1$-action. We also give a complete list of all such Fano manifolds together with 
	an explicit description of the corresponding $S^1$-actions. 
\end{abstract}
\maketitle
\setcounter{tocdepth}{1} 
\tableofcontents

\section{Introduction}
\label{secIntroduction}

A {\em Fano variety} is a smooth projective variety whose anti-canonical bundle is ample. It is proved by Koll\'{a}r-Miyaoka-Mori \cite{KMM} that there are finitely many deformation classes 
of Fano varieties in each dimension. The complete classification has been known up to dimension three by Iskovskih and Mori-Mukai \cite{I1, I2, MM}, and up to dimension five 
for toric Fano case by Batyrev and Kreuzer-Nill \cite{Ba, KN}.

A {\em monotone} symplectic manifold is a symplectic analogue of a Fano variety in the sense that $\langle c_1(TM), [\Sigma] \rangle > 0$ for every symplectic surface $\Sigma$. 
In a low dimensional case, the monotonicity of $\omega$ implies that $(M,\omega$ is symplectomorphic to some K\"{a}hler manifold.
Especially in dimension four, it was proved by Ohta-Ono \cite{OO2} that any closed monotone symplectic four
manifold is diffeomorphic to a del Pezzo surface (and hence Fano by the uniqueness of a symplectic structure on a rational surface proved by McDuff \cite{McD3}).
On the other hand, it turned out by Fine-Panov \cite{FP} that a monotone symplectic manifold need not be K\"{a}hler in general.
More precisely, they constructed a twelve dimensional closed monotone symplectic manifold having the fundamental group which is not a K\"{a}hler group. 
We notice that the existence of a closed monotone symplectic non-K\"{a}hler manifold is still unknown in dimension 6, 8, or 10. 

In a series of papers, the author deals with the following conjecture.

\begin{conjecture}\cite[Conjecture 1.1]{LinP}\cite[Conjecture 1.4]{FP2}\label{conjecture_main}
	Let $(M,\omega)$ be a six dimensional closed monotone symplectic manifold equipped with an effective Hamiltonian circle action.  Then $(M,\omega)$ is $S^1$-equivariantly
	 symplectomorphic to some K\"{a}hler manifold $(X,\omega_X, J)$ with a certain holomorphic Hamiltonian $S^1$-action.
\end{conjecture}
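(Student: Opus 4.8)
The plan is to prove the conjecture under the standing hypothesis of this paper, namely that both the maximal and the minimal fixed components $Z_{\max}$ and $Z_{\min}$ of the effective semifree Hamiltonian $S^1$-action are two-dimensional; the remaining configurations (where one extremal component has dimension $0$ or $4$) are treated in the companion papers, and the strategy below is the one appropriate to the $(2,2)$ case. First I would normalize the moment map $\mu\colon M\to\R$ and exploit the fact that, since the action is semifree, $\mu$ is a Morse--Bott function whose critical submanifolds are exactly the fixed components and all of whose nonzero isotropy weights are $\pm 1$. At $Z_{\min}$ (resp.\ $Z_{\max}$) the two normal weights are both $+1$ (resp.\ $-1$), so a neighborhood of each extremal surface is a standard weight-$(1,1)$ disk bundle. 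Between consecutive critical values the reduced space $M_t=\mu^{-1}(t)/S^1$ is a closed symplectic $4$-manifold; just above $\mu_{\min}$ it is a $\p^1$-bundle over $Z_{\min}$ and just below $\mu_{\max}$ it is a $\p^1$-bundle over $Z_{\max}$.

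Next I would run the wall-crossing analysis: as $t$ passes an interior critical value the reduced space $M_t$ changes by a symplectic blow-up or blow-down at the isolated interior fixed points, each of which by semifreeness has weight multiset $\{+1,+1,-1\}$ or $\{+1,-1,-1\}$, while the monotonicity of $\omega_M$ forces the class of the reduced symplectic form to vary linearly in $t$ (Duistermaat--Heckman) and to pass through the monotone class on each slice. Combining the localization formula for $\int_M c_1(TM)^3$ and for the Duistermaat--Heckman measure with the monotone constraint $c_1(TM)=\lambda[\omega_M]$ then yields a finite list of admissible numerical types: the genera of the extremal surfaces, the self-intersection data of the two rulings, and the number and weights of the interior fixed points are all pinned down. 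This is essentially a bookkeeping step once the constraints are assembled.

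The main obstacle is the final \emph{rigidity} step: upgrading each admissible numerical type to an actual $S^1$-equivariant symplectomorphism with a specific K\"{a}hler Fano model. For this I would reconstruct $M$ from its family of reduced spaces, so that the equivariant symplectomorphism type is determined by the isotopy class of $\{M_t\}$ together with the gluing data at the fixed components; then, for each numerical type, I would exhibit an explicit Fano manifold $X$ (a projectivized bundle over a curve, or a blow-up thereof) carrying a holomorphic $S^1$-action with matching reduced spaces. Since the reduced $4$-manifolds are rational or ruled, McDuff's uniqueness of symplectic structures on such surfaces (cf.\ \cite{McD3}) removes all moduli of the reduced forms, and an equivariant Moser argument upgrades the slicewise symplectomorphisms to a single global one. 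The delicate point is controlling the gluing across the walls so that the symplectomorphisms of individual slices patch coherently; this is precisely where monotonicity is used decisively, since by collapsing the moduli of reduced forms it reduces the patching to a contractible choice and thereby produces the desired $S^1$-equivariant symplectomorphism $M\cong X$.
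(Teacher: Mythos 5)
Your outline follows the same broad strategy as the paper (classify the fixed point data numerically via Duistermaat--Heckman and localization, exhibit Fano models for each type, reconstruct $M$ from its family of reduced spaces), but the final rigidity step --- which is the heart of the matter --- is asserted rather than proved, and the justification you give for it is incorrect. You claim that monotonicity ``collapses the moduli of reduced forms'' and thereby ``reduces the patching to a contractible choice.'' Monotonicity does no such thing. What makes the gluing data immaterial is symplectic rigidity of each reduced space in the sense used by Gonzalez: (i) cohomologous symplectic forms are diffeomorphic, (ii) deformation implies isotopy, and (iii) the group of symplectomorphisms acting trivially on homology is \emph{path-connected}. Condition (iii) is the one that allows slicewise symplectomorphisms to be patched coherently across walls, and it is not a formal consequence of monotonicity or of McDuff's uniqueness theorem; it is a collection of hard theorems about the specific surfaces that occur here ($\p^2$, $\p^1\times\p^1$, and $\p^2\,\#\,k\,\overline{\p^2}$ with $k\le 4$) due to Gromov, Abreu--McDuff, Evans, Lalonde--Pinsonnault, Pinsonnault, and Li--Li--Wu; for $k=4$, which genuinely occurs in this classification (type {\bf (III.3)}), connectedness was established only recently, and rigidity fails from $k=5$ onward. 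The paper feeds these results into Gonzalez's theorem, which states precisely that when every reduced space is symplectically rigid, the fixed point data determines $(M,\omega)$ up to $S^1$-equivariant symplectomorphism; without this input your ``equivariant Moser argument'' has no mechanism for controlling the gluing.

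There is a second gap: your Duistermaat--Heckman/localization bookkeeping pins down only homological data (what the paper calls the \emph{topological} fixed point data), whereas the reconstruction theorem requires the actual fixed point data, namely the fixed components as embedded symplectic submanifolds of the reduced spaces. Two symplectic spheres representing the same homology class in a del Pezzo surface are not a priori related by a symplectomorphism; the paper closes this gap using the isotopy theorems of Li--Wu and Zhang (a symplectic sphere of self-intersection $\ge 0$ is isotopic to an algebraic rational curve, and homologous $-1$-spheres are symplectically isotopic) together with an algebraic isotopy lemma for surfaces with $H^1(X,\mathcal{O}_X)=0$. Your proposal never addresses this passage from homology classes to embeddings. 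Finally, your wall-crossing picture is incomplete: in the $(2,2)$ case the interior fixed set does not consist only of isolated points with weights $\{+1,+1,-1\}$ or $\{+1,-1,-1\}$; at the balanced level $0$ there are two-dimensional fixed spheres (of codimension four in $M$), across which the reduced space keeps its diffeomorphism type while the Euler class of the level sets jumps by the Poincar\'{e} dual of the fixed surface --- this is exactly the data that populates most rows of the paper's classification table, so an analysis that omits it cannot produce the list of $21$ types.
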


In the previous work \cite{Cho}, the author proved that Conjecture \ref{conjecture_main} holds under the assumptions that the action is {\em semifree}\footnote{An $S^1$-action is called {\em semifree}
if it is free outside the fixed point set.} 
and at least one of extremal fixed components is an {\em isolated point}. 
Indeed, there are 18 types of such manifolds and their algebro-geometric descriptions (in the sense of Mori-Mukai \cite{MM}) as well as their fixed point 
data are illustrated in \cite[Section 6,7,8]{Cho} and \cite[Table 9.1]{Cho}, respectively. For the complete classification of semifree $S^1$-actions, 
it remains to deal with the case where every extremal fixed component is non-isolated. 

In this paper, we prove the following.
\begin{theorem}\label{theorem_main}
		Let $(M,\omega)$ be a six-dimensional closed monotone symplectic manifold equipped with a semifree Hamiltonian 
		circle action. Suppose that the maximal and the minimal fixed component are both two-dimensional. 
		Then $(M,\omega)$ is $S^1$-equivariantly symplectomorphic to some K\"{a}hler manifold with a certain holomorphic Hamiltonian circle action. 
		In fact, there are 21 types of such manifolds up to $S^1$-equivariant symplectomorphism.
\end{theorem}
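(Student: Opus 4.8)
The plan is to analyze the action through its moment map $\mu \colon M \to \R$ and the induced family of symplectic reduced spaces, using monotonicity to rigidify the combinatorics. First I would normalize $\mu$ via the hypothesis $c_1(TM) = [\omega]$: for a monotone Hamiltonian $S^1$-manifold one may choose $\mu$ so that its value at each fixed component $F$ is determined, up to a global constant, by the weights of the isotropy representation there, since the equivariant extension of $[\omega]$ agrees with equivariant $c_1$. Because the action is semifree, every normal weight is $\pm 1$, so near the minimal surface $Z_{\min}$ all normal weights equal $+1$ and near the maximal surface $Z_{\max}$ all equal $-1$; the normal bundle of each extremal component is thus a rank-two complex bundle over a surface, and every interior fixed component is either an isolated point (with weights summing to $\pm 1$, the extremal sums $\pm 3$ being excluded by hypothesis) or a surface with weights $(+1,-1)$. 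The first task is to enumerate the admissible interior fixed data and moment-map values using Morse--Bott perfectness of $\mu$ together with the Atiyah--Bott--Berline--Vergne localization constraints on the Chern numbers.

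Next I would track the reduced spaces $M_t = \mu^{-1}(t)/S^1$, which are closed symplectic $4$-manifolds. Just above $\mu_{\min}$ the space $M_t$ is the $\pp^1$-bundle $\pp(N_{\min})$ over $Z_{\min}$, and just below $\mu_{\max}$ it is the $\pp^1$-bundle $\pp(N_{\max})$ over $Z_{\max}$; as $t$ crosses an interior critical value, the semifree wall-crossing modifies $M_t$ by a symplectic blow-up or blow-down along the corresponding fixed locus (a point at an isolated interior fixed point, a curve at an interior fixed surface). Because $M$ is monotone, each $M_t$ is a monotone, hence del Pezzo, symplectic $4$-manifold by Ohta--Ono \cite{OO2} and McDuff \cite{McD3}; in particular the two boundary $\pp^1$-bundles must be rational, which forces $Z_{\min}$ and $Z_{\max}$ to be two-spheres. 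Running this birational calculus between the two ruled boundary surfaces, with the del Pezzo bound on the Picard number limiting the number of interior critical levels, yields a finite list of admissible sequences of fixed-point data.

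For each admissible sequence I would reconstruct $(M,\omega)$ from its reduced-space data and identify it with an explicit K\"{a}hler Fano threefold carrying a holomorphic Hamiltonian $S^1$-action, reading off the algebro-geometric description from the Mori--Mukai tables \cite{MM} exactly as in the isolated-extremum case of \cite{Cho}. Promoting the combinatorial match to an $S^1$-equivariant symplectomorphism then follows from equivariant rigidity: a monotone symplectic form is pinned down by its cohomology class together with the equivariant reduced data, by a fiberwise Moser argument along the $\mu$-direction of the type used in \cite{Cho}. Finally I would verify that the enumeration is exhaustive and produces exactly $21$ pairwise-distinct types, checking that no two entries coincide under equivariant symplectomorphism by comparing their fixed-point data.

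I expect the main obstacle to be the joint completeness-and-realizability step: one must simultaneously guarantee that the list of fixed-point data is exhaustive, that every entry is realized by an honest K\"{a}hler Fano manifold with the prescribed action, and that the reconstruction is rigid enough to yield an equivariant symplectomorphism rather than merely an agreement of discrete invariants. The delicate bookkeeping occurs near interior fixed surfaces, where the reduced space changes by blow-up of a positive-dimensional center and the del Pezzo monotonicity constraint must be applied most carefully to rule out spurious configurations.
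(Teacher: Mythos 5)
Your overall architecture --- balanced moment map, ABBV localization constraints, Duistermaat--Heckman wall-crossing of reduced spaces, the del Pezzo constraint from Ohta--Ono, enumeration of admissible fixed-point data, and realization by Fano threefolds --- is the same as the paper's. However, there is a genuine gap in your final step, where you claim that a ``fiberwise Moser argument along the $\mu$-direction'' promotes agreement of fixed-point data to an $S^1$-equivariant symplectomorphism. This is precisely the step that cannot be done softly. A Hamiltonian $S^1$-manifold is assembled from slices glued along boundary reduced spaces, and the result genuinely depends on the choice of gluing data: the paper is explicit that $M(\frak{S},\Phi)$ and $M(\frak{S},\Phi')$ need not be equivariantly symplectomorphic for different gluing data $\Phi,\Phi'$. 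What removes this dependence is Gonzalez's theorem \cite{G}, whose hypothesis is that every reduced space is \emph{symplectically rigid}: cohomologous symplectic forms are diffeomorphic, deformation implies isotopy, and the group of symplectomorphisms acting trivially on homology is path-connected. For the reduced spaces occurring here ($\p^1\times\p^1$ and $X_k$, $k\le 4$) these properties are deep theorems (Gromov, Abreu--McDuff, Evans, Lalonde--Pinsonnault, Pinsonnault, and Li--Li--Wu \cite{LLW} for $X_4$, which really does occur, in case {\bf (III.3)}); rigidity fails for $X_k$ with $k\ge 5$, so the bound on the number of blow-ups produced by the classification is exactly what makes the method viable. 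No Moser-type argument substitutes for this input.

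A second, related omission: your enumeration produces only the \emph{homology classes} of the interior fixed surfaces in the reduced spaces (the paper's ``topological fixed point data''), whereas the uniqueness theorem consumes the actual embedded fixed surfaces up to symplectomorphism of the reduced space (the ``fixed point data''). Two symplectic spheres in the same homology class of a del Pezzo surface could a priori be inequivalent embeddings, yielding distinct $S^1$-manifolds with identical discrete invariants. The paper closes this gap by showing that each topological fixed point datum determines a unique fixed point datum, using the facts that a symplectic sphere of nonnegative self-intersection is symplectically isotopic to an algebraic rational curve and that homologous $(-1)$-spheres are symplectically isotopic \cite{LW}\cite{Z}, together with an isotopy lemma for smooth curves in a projective surface with $H^1(X,\mcal{O}_X)=0$ \cite[Lemma 9.6]{Cho}. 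Your proposal gestures at this difficulty in the closing paragraph but names neither the issue nor the tools; with these two ingredients added, it coincides with the paper's proof.
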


\subsection{Summary of the classification}
\label{ssecSummaryOfClassification}

	Figure \ref{figure_summary} (except for {\bf (II-2.2)} and {\bf (III-3)}) 
	illustrates all possible moment map images of a six-dimensional closed monotone symplectic manifold with a Hamiltonian torus action which induces
	a semifree circle action with two dimensional extremal fixed components. 
	
	\begin{figure}[H]
		\scalebox{0.6}{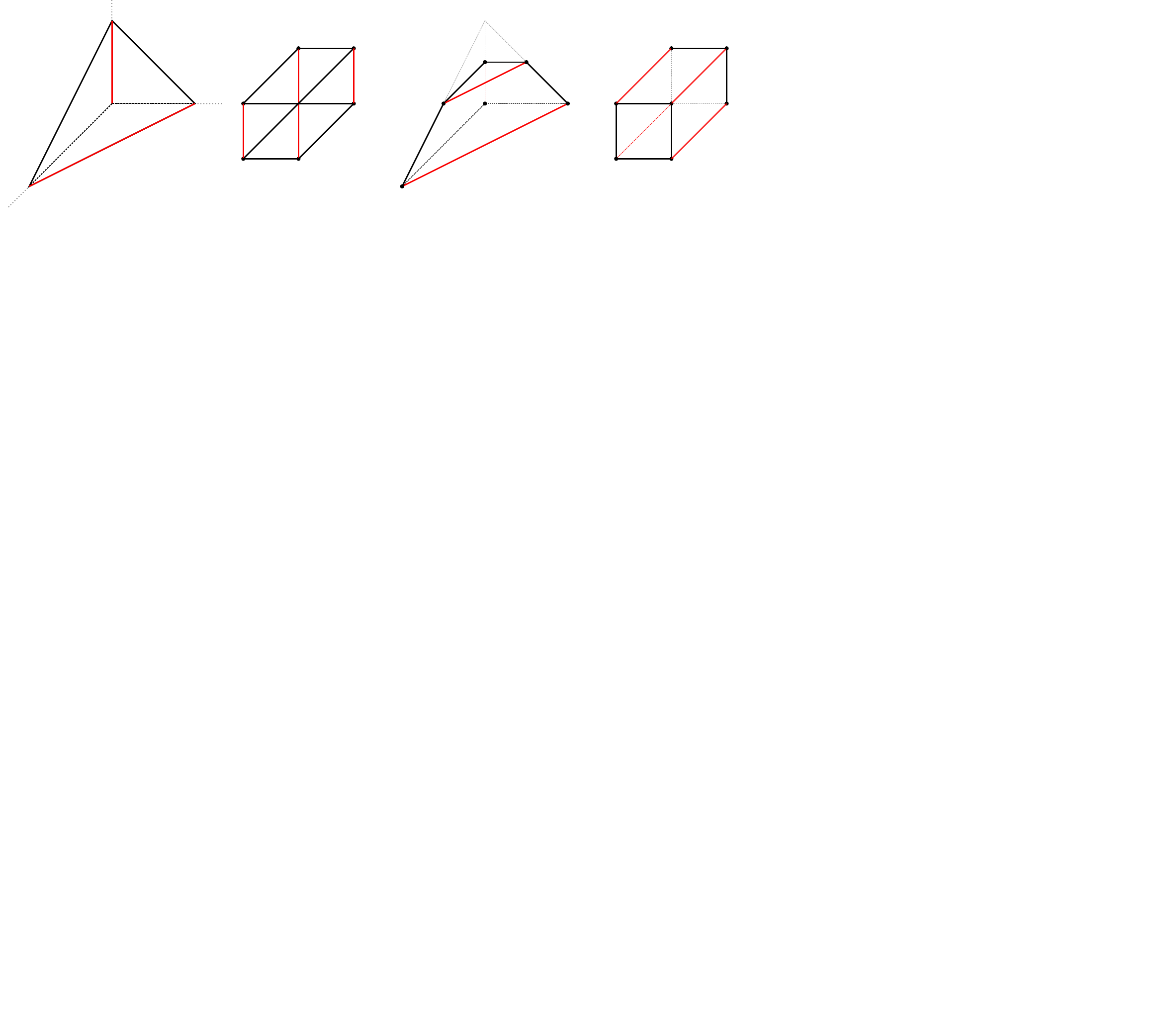}
		\caption{\label{figure_summary} Semifree $S^1$-Fano 3-folds with two dimensional extremal fixed components}
	\end{figure}	

	\noindent
	Note that 
	\begin{itemize}
		\item red edges are images of fixed spheres,
		\item red dots are images of isolated fixed points.	
	\end{itemize}
	Two exceptional cases {\bf (II-2.2)} and {\bf (III-3)} are {\em conceptual images} each of which depicts 
	a blowing up of a complexity-one and complexity zero (toric) variety along some 
	$S^1$-invariant sphere, respectively. 
	Later, one can see that 
	\begin{enumerate}
		\item {\bf (I-1), (II-1.2), (II-1.3), (II-2.1), (III.1), (III.2), (IV-1-1.1), (IV-1-1.2), (IV-1-2), (IV-2-2.1), (IV-2-3), (IV-2-4), (IV-2-5), (IV-2-6)} are toric, 
		\item {\bf (II-1.1), (IV-1-1.3), (IV-2-1.1), (IV-2-1.2), (IV-2-2.2)} are of complexity one, and 
		\item {\bf (II-2.2), (III-3)} are of complexity two
	\end{enumerate}
	where the {\em complexity} of a variety $X$ is by definition half the minimal codimension of a (possibly improper) toric subvariety of $X$.
	More detailed description of the manifolds and the actions thereon can be found in Section \ref{secCaseIMathrmCritMathringHEmptyset}, 
	\ref{secCaseIIMathrmCritMathringH},
	\ref{secCaseIIIMathrmCritMathringH11}, \ref{secCaseIVMathrmCritMathringH11}.
	See also Table \ref{table_list}.

\subsection{Outline of Proof of Theorem \ref{theorem_main}}
\label{ssecOutlineOfProofOfTheoremRefTheoremmain}

The strategy of the proof of Theorem \ref{theorem_main} is essentially the same as one used in \cite{Cho}. 
The main difference from \cite{Cho} is that the normal bundle of an extremal fixed component could be {\em arbitrary}, while the normal bundle of an isolated extremum
is always trivial and isomorphic to $\C^3$. By careful analysis of the geometry of reduced spaces, we may overcome the difficulty and obtain a full list of 
(topological) fixed point data as given in 
Table \ref{table_list}, which leads to {\em symplectical rigidity}\footnote{See Section \ref{secFixedPointData} for the definition.} 
of reduced spaces. This fact enables us to utilize the following theorem. 

\begin{theorem}\cite[Theorem 1.5]{G}\label{theorem_Gonzalez}
		Let $(M,\omega)$ be a six-dimensional closed semifree Hamiltonian $S^1$-manifold. 
		Suppose that every reduced space is symplectically rigid.
		Then $(M,\omega)$ is determined by its fixed point data up to $S^1$-equivariant symplectomorphism.
\end{theorem}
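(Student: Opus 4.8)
The plan is to reconstruct $(M,\omega)$, together with its $S^1$-action, level-by-level along the moment map $\mu\colon M\to\R$, and to show that at each stage the reconstruction is forced by the fixed point data once the reduced spaces are symplectically rigid. Let $c_0<c_1<\cdots<c_N$ be the critical values of $\mu$ (the images of the fixed components), so that $\R\setminus\{c_0,\dots,c_N\}$ consists of open intervals of regular values. Over each such interval $I$ the $S^1$-action is free, and by the Duistermaat--Heckman theorem the reduced spaces $M_t:=\mu^{-1}(t)/S^1$ carry reduced forms $\omega_t$ which are all diffeomorphic for $t\in I$, with $[\omega_t]$ varying linearly in $t$. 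Consequently $\mu^{-1}(I)$, as a Hamiltonian $S^1$-space, is equivariantly symplectomorphic to a standard mapping-cylinder model built from a single reduced space $(M_{t_0},\omega_{t_0})$, the curvature of the associated principal $S^1$-bundle, and the Duistermaat--Heckman linear term; all of these are recorded by the fixed point data. Thus the regular pieces are individually determined up to $S^1$-equivariant symplectomorphism.

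Next I would analyze an invariant neighborhood of each critical level. Near a fixed component $F$ with $\mu(F)=c_k$, the Marle--Guillemin--Sternberg equivariant normal form describes such a neighborhood in terms of the symplectic normal bundle $\nu_F=E^+\oplus E^-$, whose weights are all $\pm1$ by semifreeness. Crossing $c_k$ therefore alters the reduced space by an explicit symplectic cut-and-paste (a blow-up/blow-down, or more generally the Guillemin--Sternberg birational transformation) governed by $\pp(E^+)$ and $\pp(E^-)$, and both the local model and the surgery depend only on $F$ and $\nu_F$, i.e.\ on the fixed point data. Starting from the minimal component $F_{\min}=\mu^{-1}(c_0)$, whose neighborhood is the disk bundle of its (all positive weight) normal bundle, and inducting on $k$, the diffeomorphism type and the class $[\omega_t]$ of every reduced space are then pinned down by the fixed point data alone.

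The core of the argument is to upgrade these level-wise identifications into a single global $S^1$-equivariant symplectomorphism between two manifolds $(M,\omega)$ and $(M',\omega')$ sharing the same fixed point data. This is exactly where symplectic rigidity of the reduced spaces enters: it guarantees not merely that $(M_t,\omega_t)\cong(M'_t,\omega'_t)$ abstractly at each regular level, but that one may choose these symplectomorphisms in a continuous family in $t$, compatibly with the bundle and Duistermaat--Heckman data, so that over each regular interval an $S^1$-equivariant symplectomorphism glues from a single choice via a fibrewise Moser argument. I would then propagate the identification across each critical level by combining the rigid matching of the reduced spaces on the two sides of $c_k$ with the uniqueness of the Marle--Guillemin--Sternberg normal form near $F$, lifting the reduced-level map to an invariant neighborhood of $F$ and matching it to the already-constructed maps on the adjacent regular pieces.

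The main obstacle is precisely this gluing across the critical levels. On a regular interval an equivariant Moser argument is routine, but at a critical value the reduced space degenerates and undergoes a topology change, so the maps constructed below and above $c_k$ must be reconciled through the surgery rather than simply composed. The delicate point is to show that the rigidity hypothesis forces the space of admissible gluings to be connected---so that different coherent choices yield equivariantly isotopic, hence equivariantly symplectomorphic, total spaces---and that the resulting isotopies can be taken $S^1$-invariant and extended across $c_k$ without destroying the normal-form matching. Once this local-to-global extension is secured, assembling the pieces from $F_{\min}$ up to the maximal component $F_{\max}$ by induction on the critical values produces the desired $S^1$-equivariant symplectomorphism $M\cong M'$, establishing that the fixed point data determines $(M,\omega)$.
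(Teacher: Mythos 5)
Your outline reproduces the architecture of the argument this paper actually relies on: the paper does not reprove the theorem but cites Gonzalez \cite{G}, and Section \ref{secFixedPointData} sets up exactly your scheme --- decomposition of $M$ into regular and critical slices along the moment map, the gluing criterion of Lemma \ref{lemma_gluing} (two slices glue along a common level precisely when the reduced forms and the Euler classes of the boundary $S^1$-bundles match), and symplectic rigidity of the reduced spaces as the device that kills the dependence of $M(\frak{S},\Phi)$ on the gluing data $\Phi$. Your identification of the regular pieces via Duistermaat--Heckman and of the critical crossings via the equivariant normal form, with rigidity entering only at the gluing stage, is where the cited proof places these ingredients as well.

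Two points, however, fall short of a proof. First, you name the crux --- that rigidity makes the space of admissible gluings connected and lets the level-wise identifications be chosen continuously in $t$ --- but you do not carry it out, and this is the actual content of \cite{G}: each clause of Definition \ref{definition_rigid} does a specific job there. ``Uniqueness'' plus a Moser argument identifies cohomologous reduced forms on a slice boundary; ``deformation implies isotopy'' upgrades the Duistermaat--Heckman family of reduced forms over a regular interval to an isotopy; and path-connectedness of the group of homologically trivial symplectomorphisms is precisely what makes any two gluing maps satisfying Lemma \ref{lemma_gluing} isotopic, hence the glued manifolds $S^1$-equivariantly symplectomorphic. Asserting that rigidity ``forces the space of admissible gluings to be connected'' is the conclusion of these steps, not a substitute for them. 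Second, your analysis of a critical level tacitly assumes all fixed components at that level share a common Morse--Bott index, so that the crossing is the clean blow-up/blow-down of Proposition \ref{proposition_GS}; Gonzalez's theorem, as restated in Theorem \ref{theorem_Gonzalez_5}, carries exactly this ``simple'' hypothesis, which the bare statement you were given suppresses. It is automatic in the paper's monotone application by Lemma \ref{lemma_possible_critical_values} (see Remark \ref{remark_Gonzalez_5}), but for a general six-dimensional semifree Hamiltonian $S^1$-manifold your induction across the critical values would need either this hypothesis or a separate argument for levels containing both index-two and co-index-two components.
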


Here, by the fixed point data of $(M,\omega)$ we mean a collection of a
{\em symplectic reduction\footnote{A reduced space at a critical level is not a smooth manifold nor an orbifold in general. However, if $\dim M = 6$ and the action is semifree, then a symplectic reduction at any (critical) level is a smooth manifold with the induced symplectic form. See 
Proposition \ref{proposition_GS}.} at each critical level} together with an {\em information of 
critical submanifolds} (or equivalently fixed components) as embedded symplectic submanifolds of reduced spaces. (See Definition \ref{definition_fixed_point_data} or \cite[Definition 1.2]{G}.)

We divide the proof of Theorem \ref{theorem_main} into three steps : 
\begin{itemize}
	\item ({\bf Step 1}) Classify all {\em topological fixed point data}\footnote{A {\em topological fixed point data}, or TFD for short,
	is a topological analogue of a fixed point data in the sense that it records ``homology classes'', not embeddings themselves, of fixed components in reduced spaces.}. 
	In this process, we obtain a complete list of all topological fixed point data as described in Table \ref{table_list}. Then it follows that every reduced space is diffeomorphic to one of the following 
	manifolds : $\p^1 \times \p^1$ or $X_k$ : $k$-times blow-up of $\p^2$ for $1 \leq k \leq 4$ where those spaces are known to be symplectically rigid (see Section \ref{secMainTheorem}).
	\item ({\bf Step 2}) Show that each topological fixed point data determines a unique fixed point data. 
	\item ({\bf Step 3}) For each topological fixed point data given in Table \ref{table_list}, there exists a corresponding smooth Fano variety
	with a holomorphic semifree Hamiltonian $S^1$-action.
\end{itemize}
Then the proof of Theorem \ref{theorem_main} immediately follows by Gonzalez's theorem \ref{theorem_Gonzalez}.

This paper is organized as follows. In Section \ref{secBackground}, we build up our notations and introduce theorems about Hamiltonian $S^1$-actions that will be crucially used in
the rest of the paper.
In Section \ref{secFixedPointData}, we give a rigorous definition of (topological) fixed point data and explain the idea of the Gonzalez's Theorem \cite[Theorem 1.5]{G}. 
Then, from Section \ref{secCaseIMathrmCritMathringHEmptyset} to Section \ref{secCaseIVMathrmCritMathringH11}, we classify all topological fixed point data as well as provide examples of Fano varieties with 
specific holomorphic $\C^*$-actions for each fixed point data in Table \ref{table_list}. Finally in Section \ref{secMainTheorem}, we complete the proof of Theorem \ref{theorem_main}.

\subsection*{Acknowledgements} 
The author would like to thank Dmitri Panov for bringing the paper \cite{Z} to my attention.
The author would also like to thank Jinhyung Park for helpful comments. 
This work is supported by the National Research Foundation of Korea(NRF) grant funded by the Korea government(MSIP; Ministry of Science, ICT \& Future Planning) (NRF-2017R1C1B5018168).

\section{Background}
\label{secBackground}

	In this section, we establish our notations and review some basic properties of a semifree Hamiltonian circle action.
	We also refer to \cite[Section 2,3,4]{Cho} (and the references therein) in which basic materials (including the ABBV localization and the Duistermaat-Heckman theorem)
	are provided in more detail. 
	
	Let $S^1 \subset \C^*$ be the unit circle group with the Lie algebra $\frak{t}$ and its dual Lie algebra $\frak{t}^*$. 
	An $S^1$-action on a symplectic manifold $(M,\omega)$ is called {\em Hamiltonian} if there exists a smooth function $H : M \rightarrow [a,b] \subset \R$ (called a {\em moment map}) such that 
	\[
		\omega(\underline{X}, \cdot) = dH(\cdot)
	\]
	for every $X \in \frak{t}$ where $\underline{X}$ denotes the fundamental vector field on $M$ generated by $X$. Note that $H$ is a perfect Morse-Bott function and has 
	many good properties (e.g., local normal form), see \cite[Chapter 4]{Au} or \cite[Section 2]{Cho}. 
	
	\begin{notation}
		We use the following notations.
	\begin{itemize}
		\item $M^{S^1}$ : fixed point set (which coincides with the critical point set of $H$).
		\item $\mathrm{Crit}~H$ : set of critical values of $H$. 
		\item $\mathrm{Crit}~ \mathring{H}$ : set of non-extremal critical values of $H$.
		\item $Z_{\min} := H^{-1}(a)$, $Z_{\max} := H^{-1}(b)$ : minimal and maximal fixed component.
		\item $M_t := H^{-1}(t) / S^1$ : reduced space at level $t \in [a,b]$
		\item $\omega_t$ : reduced symplectic form on $M_t$.
		\item $P_c^{\pm}$  : principal $S^1$-bundle $\pi_{c \pm \epsilon} : H^{-1}(c \pm \epsilon) \rightarrow M_{c \pm \epsilon}$ where $\epsilon > 0$ is sufficiently small.
		\item $e(P_c^{\pm}) \in H^2(M_{c \pm \epsilon}; \Q)$ : the Euler class of $P_c^{\pm}$.
		\item $Z_c$ : fixed point set lying on the level set $H^{-1}(c)$. That is, $Z_c = M^{S^1} \cap H^{-1}(c)$.
		\item $\R[\lambda]$ : cohomology ring of $H^*(BS^1;\R)$, where $-\lambda$ is the Euler class of the universal Hopf bundle $ES^1 \rightarrow BS^1.$
	\end{itemize}
	\end{notation}

	From now on, we assume that the $S^1$-action on $(M,\omega)$ is semifree. 

\subsection{Topology of reduced spaces}
\label{ssecTopologyOfReducedSpaces}

	In this section, we briefly review how the topology of a reduced space changes when a level set of $H$ passes through a critical level.
	Note that the `{\em semifree}' condition implies that a reduced space $M_t$ is a smooth manifold for every regular value $t$ of $H$. 
	
	\begin{proposition}\cite{McD2}\cite{GS}\label{proposition_GS}
		Let $(M,\omega)$ be a closed semifree Hamiltonian $S^1$-manifold with a moment map $H : M \rightarrow \R$ and $c \in \R$ be a critical value of $H$. 
		If $Z_c := H^{-1}(c) \cap M^{S^1}$ consists of index-two (co-index two, resp.) fixed components, then $M_c = H^{-1}(c) / S^1$ is smooth and is diffeomorphic to $M_{c-\epsilon}$
		($M_{c+\epsilon}$, resp.). Also, $M_{c+\epsilon}$ is the blow-up (blow-down, resp.) of $M_c$ along $Z_c$.	
	\end{proposition}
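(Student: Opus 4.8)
The plan is to reduce everything to a single model computation near $Z_c$ and then globalize by a gradient-flow argument; the co-index-two case follows from the index-two case by replacing $H$ with $-H$, so I focus on the index-two case. First I would invoke the equivariant symplectic neighborhood theorem (the Marle--Guillemin--Sternberg normal form, see \cite[Chapter 4]{Au}) to identify an $S^1$-invariant neighborhood of a component $F \subseteq Z_c$ with a neighborhood of the zero section in the normal bundle $\nu = \nu_- \oplus \nu_+$, where $\nu_\pm$ is the subbundle on which $S^1$ acts with weight $\pm 1$ (the weights are $\pm 1$ because the action is semifree). The index-two hypothesis says precisely that $\nu_-$ is a complex line bundle, while $\nu_+$ has some rank $k \ge 0$. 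In these coordinates the moment map reads $H = c - \tfrac12|v|^2 + \tfrac12|w|^2$ for $v \in \nu_-$ and $w \in \nu_+$, up to the choice of a compatible metric.

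Next I would read off the three reduced spaces from this model. On $H^{-1}(c-\epsilon) = \{|v|^2 = |w|^2 + 2\epsilon\}$ one has $v \ne 0$, so $S^1$ acts freely and $M_{c-\epsilon}$ is smooth; gauging $v$ to be real and positive identifies a neighborhood of the image of $F$ with a disk bundle of rank $k$ over $F$ (the invariant $v \otimes w$ exhibits it as the disk bundle of $\nu_-\otimes\nu_+$, which is the only place the bundle twisting enters). On $H^{-1}(c+\epsilon) = \{|w|^2 = |v|^2 + 2\epsilon\}$ one has $w \ne 0$, the action is again free, and the locus $v=0$ descends to the projectivization $\pp(\nu_+)$, a $\p^{k-1}$-bundle over $F$; the invariants $v\,w_j$ together with the ratios $[w_1:\dots:w_k]$ identify $M_{c+\epsilon}$ near $F$ with the blow-up of the rank-$k$ disk bundle along its zero section, the exceptional divisor being $\pp(\nu_+)$.

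The remaining, and most delicate, point is the critical level itself. Here $H^{-1}(c) = \{|v| = |w|\}$ contains the fixed locus $F$, so the quotient is not a priori smooth. I would establish smoothness by exhibiting an explicit chart: the expression $[v,w] \mapsto (v/|v|)\,w$ is a well-defined $S^1$-invariant map on the complement of the zero section (there $v \ne 0$ and $w \ne 0$), it descends to a homeomorphism of $M_c$ near $F$ onto the total space of the rank-$k$ disk bundle, and it is a diffeomorphism off the zero section with respect to the genuine free-quotient structure there. Declaring this map a chart equips $M_c$ with a smooth structure compatible with the regular part, proving that $M_c$ is smooth and, simultaneously, identifying it with $M_{c-\epsilon}$ near $Z_c$. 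Away from $Z_c$ the downward gradient flow of $H$ is $S^1$-equivariant and carries $H^{-1}(c)\setminus Z_c$ diffeomorphically onto $H^{-1}(c-\epsilon)$, hence descends to a diffeomorphism $M_c \setminus Z_c \cong M_{c-\epsilon}\setminus Z_c$; matching it with the local model across $Z_c$ yields the global diffeomorphism $M_c \cong M_{c-\epsilon}$. Contracting the exceptional divisor $\pp(\nu_+)$ then realizes $M_{c+\epsilon}$ as the blow-up of $M_c$ along $Z_c$, which is the last assertion.

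\textbf{Main obstacle.} The step I expect to be hardest is precisely the smoothness of $M_c$ at the critical level, where the $S^1$-orbit through $Z_c$ degenerates and the naive quotient structure breaks down; the index-two condition is exactly what makes the collapsing directions a single complex line, so that the invariant-theoretic chart above is a manifold chart rather than producing a genuine singularity (a higher-rank negative bundle would yield a weighted-projective-type singularity instead). A secondary bookkeeping difficulty is tracking the twist by $\nu_-$ when passing between the local model and the global reduced spaces, though this does not affect the diffeomorphism type. All of this is classical and goes back to McDuff \cite{McD2} and Guillemin--Sternberg \cite{GS}; my proposal is simply to make the normal-form computation and the flow-patching explicit.
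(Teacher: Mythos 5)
The paper never proves this proposition: it is imported from McDuff \cite{McD2} and Guillemin--Sternberg \cite{GS}, and the surrounding text only remarks that the index hypotheses are automatic for semifree actions in dimension six. So there is no in-paper argument to compare against; what you have written is a reconstruction of the proof in the cited references, and it follows their route faithfully: equivariant normal form $\nu=\nu_-\oplus\nu_+$ with weights $-1$ and $+1$, the observation that index two makes $\nu_-$ a line bundle so that the invariant $v\otimes w$ furnishes a genuine manifold chart for the critical quotient (rather than the singular cone one gets when both $\nu_\pm$ have complex rank at least two), the identification of $M_{c-\epsilon}$ with the disk bundle of $\nu_-\otimes\nu_+$ and of $M_{c+\epsilon}$ with its blow-up along the zero section with exceptional divisor $\pp(\nu_+)$, and globalization by the gradient flow. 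These local computations are correct, as is the reduction of the co-index-two case to the index-two case via $H\mapsto -H$.

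Two steps need repair, both fixable. First, the downward flow does not carry $H^{-1}(c)\setminus Z_c$ onto all of $H^{-1}(c-\epsilon)$: its image is the complement of the descending sphere bundle $S(\nu_-)\subset H^{-1}(c-\epsilon)$ (the locus $w=0$ in the model), since those points limit upward into $Z_c$ instead of crossing the level. The conclusion you then state, $M_c\setminus Z_c\cong M_{c-\epsilon}\setminus Z_c$, is the correct one --- precisely because $S(\nu_-)/S^1\cong Z_c$, which is one more place where rank one of $\nu_-$ is used --- but as written your sentence contradicts it: a diffeomorphism onto the whole level set would make the noncompact $M_c\setminus Z_c$ diffeomorphic to the compact $M_{c-\epsilon}$. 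Second, ``matching with the local model'' is not automatic, because the two identifications genuinely disagree near $Z_c$: in your invariant charts the flow map is $u\mapsto |u|\,u$ (flowing $(v_0,w_0)$ with $|v_0|=|w_0|$ down to level $c-\epsilon$ sends $u=(v_0/|v_0|)\otimes w_0$ to $v_0\otimes w_0=|u|\,u$), a homeomorphism that is not a diffeomorphism across the zero section. Hence the global diffeomorphism $M_c\cong M_{c-\epsilon}$ must be assembled by interpolating between the chart identification (near $Z_c$) and the flow identification (far from $Z_c$) on an annular region of the disk bundle, e.g.\ $u\mapsto\rho(|u|)\,u$ with $\rho\equiv 1$ near $0$ and $r\rho(r)$ strictly increasing; the same patching is needed for the blow-up statement at level $c+\epsilon$. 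With these two corrections, your outline is a complete proof of the cited statement.
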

	
	If $M$ is of dimension six, then the condition of Proposition \ref{proposition_GS} is automatically satisfied so that a reduced space is smooth for every (possibly critical) value of $H$.
	In fact, Guillemin-Sternberg \cite{GS} states Proposition \ref{proposition_GS} in full generality (i.e., without index assumptions), namely reduced spaces are in {\em birational equivalence}.
	See also the paragraph below \cite[Proposition 4.1]{Cho} for the brief survey on this topic.
	They also describe how the reduced symplectic form $\omega_{c+\epsilon}$ can be obtained from $\omega_{c-\epsilon}$. 
	Recall that the Duistermaat-Heckman's theorem \cite{DH} says that 
	\begin{equation}\label{equation_DH}
		[\omega_r] - [\omega_s] = (s-r)e, \quad r,s \in I
	\end{equation}
	where $I$ is an interval consisting of regular values of $H$ and $e \in H^2(M_r; \Z)$ denotes the Euler class of the principal $S^1$-bundle 
	$\pi_r : H^{-1}(r) \rightarrow M_r$. 
	
	\begin{lemma}\cite[Theorem 13.2]{GS}\label{lemma_Euler_class}
		Suppose that $Z_c = M^{S^1} \cap H^{-1}(c)$ consists of fixed components $Z_1, \cdots, Z_k$ each of which is of index two.   
		Let $e^{\pm}$ be the Euler classes of principal $S^1$-bundles $\pi_{c \pm \epsilon} : H^{-1}(c \pm \epsilon) \rightarrow M_{c \pm \epsilon}$.
		Then 
		\[
			e^+ = \phi^*(e^-) + E \in H^2(M_{c+\epsilon}; \Z)
		\] where $\phi : M_{c+\epsilon} \rightarrow M_{c-\epsilon}$ is the blow-down map and $E$ denotes the Poincar\'{e} dual of the exceptional divisor of $\phi$.
	\end{lemma}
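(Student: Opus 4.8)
The plan is to reduce the identity to a local statement near the critical fibre and then pin down a single integer by a degree computation on the exceptional fibres. Throughout I identify $M_c \cong M_{c-\epsilon}$ and regard $\phi : M_{c+\epsilon} \to M_{c-\epsilon}$ as the blow-down supplied by Proposition \ref{proposition_GS}, and I write $\mathcal{E}_1, \dots, \mathcal{E}_k$ for the components of its exceptional divisor, one lying over each $Z_i$, so that the class $E$ in the statement is $\sum_i \mathrm{PD}(\mathcal{E}_i)$.

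First I would show that $e^+ - \phi^* e^-$ is supported on the exceptional divisor. Choosing an invariant metric compatible with $\omega$, the negative gradient flow of $H$ gives an $S^1$-equivariant diffeomorphism between $H^{-1}(c+\epsilon)$ minus the stable manifold of $Z_c$ and $H^{-1}(c-\epsilon)$ minus the unstable manifold of $Z_c$. Passing to quotients, this is exactly the restriction of $\phi$ to $M_{c+\epsilon}\setminus \bigcup_i \mathcal{E}_i \xrightarrow{\sim} M_{c-\epsilon}\setminus \bigcup_i Z_i$, and it is covered by an isomorphism of principal $S^1$-bundles $P_c^+ \cong \phi^* P_c^-$ there. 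Hence $e^+ - \phi^* e^-$ restricts to zero on $M_{c+\epsilon}\setminus\bigcup_i \mathcal{E}_i$. Since each $\mathcal{E}_i$ is a smooth real-codimension-two submanifold, the Thom isomorphism together with the long exact sequence of the pair $(M_{c+\epsilon}, M_{c+\epsilon}\setminus\bigcup_i\mathcal{E}_i)$ shows that any such class is an integral combination $\sum_i m_i\,\mathrm{PD}(\mathcal{E}_i)$.

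It then remains to prove $m_i = 1$ for every $i$, and this I would do in the local model. By the equivariant normal-form theorem a neighbourhood of $Z_i$ is equivariantly symplectomorphic to a neighbourhood of the zero section of $\nu_i = \nu_i^- \oplus \nu_i^+$, where the index-two hypothesis and semifreeness make $\nu_i^-$ a complex line bundle on which $S^1$ acts with weight $-1$, $\nu_i^+$ carries weight $+1$, and $H = c + \tfrac{1}{2}\big(\|\cdot\|_{\nu^+}^2 - \|\cdot\|_{\nu^-}^2\big)$. Reducing this model at $c\pm\epsilon$ reproduces the blow-up of Proposition \ref{proposition_GS}, with $\mathcal{E}_i = \mathbb{P}(\nu_i^+)$ appearing as the image of $\{v^-=0\}$; over $\mathcal{E}_i$ the bundle $P_c^+$ restricts to the unit circle bundle of $\nu_i^+$, i.e.\ fibrewise the Hopf bundle, while $\phi^* e^-$ vanishes on the exceptional fibres. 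Evaluating $e^+$ on an exceptional fibre and comparing with $\mathrm{PD}(\mathcal{E}_i)$ paired against the same fibre then forces $m_i = 1$, which gives $e^+ = \phi^* e^- + E$.

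The delicate point is precisely this last comparison of signs: one must orient the Hopf fibres by the weight-$(+1)$ action on $\nu_i^+$, orient $\mathcal{E}_i$ and $M_{c+\epsilon}$ compatibly with the complex/symplectic structure, and check that the Euler number of $P_c^+$ over an exceptional fibre and the corresponding self-intersection of $\mathcal{E}_i$ carry matching signs, so that the coefficient is $+1$ rather than $-1$. Keeping these conventions consistent with the sign in the Duistermaat--Heckman formula \eqref{equation_DH} is the only real obstacle, as the remaining computations are routine features of the local model.
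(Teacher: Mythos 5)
The paper itself gives no proof of Lemma \ref{lemma_Euler_class} (it is quoted from \cite[Theorem 13.2]{GS}), so your argument must stand on its own. Your first two steps are correct and standard: the invariant gradient flow identifies $P_c^+$ with $\phi^*P_c^-$ over the complement of the exceptional locus, and the long exact sequence of the pair together with the Thom isomorphism of the (complex, hence oriented) normal bundles forces $e^+ - \phi^*(e^-) = \sum_i m_i\,\mathrm{PD}(\mathcal{E}_i)$ with $m_i \in \Z$. The genuine gap is in your third step, and it is not the sign issue you flag. Your determination of $m_i$ by pairing against exceptional fibres only makes sense when $\phi$ is a nontrivial blow-down, i.e.\ when $Z_i$ has coindex at least four, so that $\mathcal{E}_i = \mathbb{P}(\nu_i^+)$ contains fibre curves $F$ with $\phi_*[F]=0$ (then indeed $\langle e^+,[F]\rangle = -1$, $\langle \mathrm{PD}(\mathcal{E}_i),[F]\rangle = -1$, $\langle\phi^*e^-,[F]\rangle = 0$, giving $m_i=1$). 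But the lemma is stated for index-two components of arbitrary coindex, and the case this paper uses most heavily is a fixed surface at level $0$ in a six-manifold, which has coindex two: there $\mathbb{P}(\nu_i^+)\cong Z_i$, the map $\phi$ is a diffeomorphism, and the ``exceptional divisor'' is $Z_i$ itself (this is exactly the situation of Corollary \ref{corollary_Euler_class_zero_level}, which the paper derives from this lemma and uses throughout Sections \ref{secCaseIIMathrmCritMathringH}--\ref{secCaseIVMathrmCritMathringH11}). In that case there are no exceptional fibres, no cycle on which $\phi^*e^-$ vanishes for free, and your pairing argument cannot determine $m_i$ at all.

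To close the gap you need an honest local comparison of the two bundles near a coindex-two component. In the fibrewise model $\nu_i = \C_{(-1)}\oplus\C_{(+1)}$ with $H = c + \tfrac{1}{2}\left(|v^+|^2 - |v^-|^2\right)$, both reduced spaces are identified fibrewise with $\C$ via the invariant coordinate $w = v^-v^+$ (globally, with a neighborhood of the zero section of $\nu_i^-\otimes\nu_i^+$), and the flow identification of $P_c^+$ with $P_c^-$ over $\{w\neq 0\}$ has clutching function $w \mapsto \bar{w}/|w|$; its winding number, read with the paper's orientation conventions, is the local multiplicity and equals $+1$. This fibrewise winding-number (equivalently, relative Euler class) computation is precisely the content of \cite[Lemma 5]{McD1}, cited immediately after Corollary \ref{corollary_Euler_class_zero_level}, or of the Duistermaat--Heckman jump formula. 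Adding this computation — or simply invoking McDuff's lemma for the coindex-two components while keeping your fibre argument for the others — would make your proof complete.
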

		
	It is worth mentioning that if $Z_c$ in Lemma \ref{lemma_Euler_class} is of codimension four in $M$, i.e., $Z_c$ is of co-dimension two in $M_{c-\epsilon}$, then the 
	blow-up of $M_{c-\epsilon}$ is itself and the exceptional divisor becomes $Z_c$ so that we obtain the following.
	
	\begin{corollary}\label{corollary_Euler_class_zero_level}
		Under the same assumption with Lemma \ref{lemma_Euler_class}, if $Z_c$ is of co-dimension four in $M$, then the topology of a reduced does change, i.e., 
		$M_{c-\epsilon} \cong M_{c+\epsilon}$. Moreover, we have 
		\[
			e^+ = e^- + \mathrm{PD}(Z_c) \in H^2(M_{c+\epsilon}; \Z).
		\]
	\end{corollary}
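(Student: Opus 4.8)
The plan is to read the corollary off directly from Proposition \ref{proposition_GS} and Lemma \ref{lemma_Euler_class}; the only new ingredient is a dimension count that forces the blow-up appearing there to be trivial, so the argument is essentially a specialization of the lemma.

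First I would pin down the dimensions. Writing $\dim M = 2n$, the hypothesis that $Z_c$ has codimension four in $M$ gives $\dim Z_c = 2n-4$, while each reduced space $M_{c\pm\epsilon}$ has dimension $2n-2$. By Proposition \ref{proposition_GS} we have $M_c \cong M_{c-\epsilon}$, and $Z_c$ sits inside $M_c$ as an embedded symplectic submanifold; under this embedding its real codimension is $(2n-2)-(2n-4)=2$, so $Z_c$ is a symplectic (equivalently, complex-codimension-one) divisor in $M_{c-\epsilon}$. Next I would invoke the wall-crossing description: $M_{c+\epsilon}$ is the symplectic blow-up of $M_c$ along $Z_c$, with blow-down map $\phi\colon M_{c+\epsilon}\to M_{c-\epsilon}$. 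The key observation is that blowing up a symplectic manifold along a codimension-two submanifold is trivial, since the exceptional locus is the projectivization $\mathbb{P}(\nu)$ of the normal line bundle $\nu$ of $Z_c$, and a rank-one normal bundle has single-point fibers, whence $\mathbb{P}(\nu)\cong Z_c$ and $\phi$ is a diffeomorphism. This already yields $M_{c-\epsilon}\cong M_{c+\epsilon}$, and under this identification the exceptional divisor $E$ of $\phi$ is exactly $Z_c$, so $E=\mathrm{PD}(Z_c)$ in $H^2(M_{c+\epsilon};\Z)$.

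Finally I would substitute into Lemma \ref{lemma_Euler_class}. Since $\phi$ is a diffeomorphism, $\phi^*(e^-)=e^-$ under the above identification, so the formula $e^+=\phi^*(e^-)+E$ of the lemma specializes to $e^+=e^-+\mathrm{PD}(Z_c)$, as claimed. The one genuinely delicate point is justifying that the exceptional divisor is \emph{canonically} $Z_c$ rather than some proper modification of it; here I would appeal to the local normal form for the symplectic cut/blow-up near an index-two fixed component that is of complex codimension one in the reduced space, which makes both the identification $\mathbb{P}(\nu)\cong Z_c$ and the triviality of $\phi$ transparent. Everything else is bookkeeping on cohomology classes.
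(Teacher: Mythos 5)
Your proof is correct and is essentially the paper's own argument: the paper likewise observes that codimension four in $M$ means codimension two in $M_{c-\epsilon}$, so the blow-up of $M_{c-\epsilon}$ along $Z_c$ is $M_{c-\epsilon}$ itself with exceptional divisor $Z_c$, and then reads off $e^+ = e^- + \mathrm{PD}(Z_c)$ from Lemma \ref{lemma_Euler_class}. Your extra remarks (the projectivization $\mathbb{P}(\nu)\cong Z_c$ of the rank-one normal bundle, and the identification $E=\mathrm{PD}(Z_c)$) simply make explicit what the paper states in one sentence before the corollary.
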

	See also \cite[Lemma 5]{McD1} for the case of $\dim M = 6$.
			
\subsection{Equivariant cohomology}
\label{ssecEquivariantCohomology}	

	The {\em equivariant cohomology} of $M$ is defined by
	\[
		H^*_{S^1}(M) := H^*(ES^1 \times_{S^1} M)
	\]
	It admits a natural $H^*(BS^1)$-module structure induced by the projection map $\pi$ :
	\begin{equation}\label{equation_Mbundle}
		\begin{array}{ccc}
			M \times_{S^1} ES^1 & \stackrel{f} \hookleftarrow & M \\[0.3em]
			\pi \downarrow          &                             &   \\[0.3em]
			BS^1                   &                             &
		\end{array}
	\end{equation}
	where $f$ is an inclusion of $M$ as a fiber. Then $H^*(BS^1)$-module structure on $H^*_{S^1}(M)$ is given by the map $\pi^*$ such that 
	\[
		y \cdot \alpha = \pi^*(y)\cup \alpha 
	\] for $y \in H^*(BS^1)$ and $\alpha \in H^*_{S^1}(M)$. 
	One remarkable fact on the equivariant cohomology of a Hamiltonian $S^1$-manifold is that it is {\em equivariantly formal}.

	\begin{theorem}\label{theorem_equivariant_formality}\cite{Ki}
		Let $(M,\omega)$ be a closed symplectic manifold equipped with a Hamiltonian circle action. Then $M$ is equivariatly formal, that is,
		$H^*_{S^1}(M)$ is a free $H^*(BS^1)$-module so that $$H^*_{S^1}(M) \cong H^*(M) \otimes H^*(BS^1).$$
		Equivalently, the map $f^*$ is surjective with kernel $x \cdot H^*_{S^1}(M)$ where $\cdot$ means the scalar multiplication of $H^*(BS^1)$-module structure on $H^*_{S^1}(M)$.
	\end{theorem}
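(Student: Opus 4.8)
\section*{Proof proposal}

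The plan is to deduce equivariant formality from the fact that the moment map $H$ is an $S^1$-invariant Morse--Bott function whose critical set is exactly $M^{S^1}$, and then to run equivariant Morse theory on the filtration of $M$ by sublevel sets $M^{\le t} := H^{-1}\big((-\infty,t]\big)$. The crucial input is that the negative normal bundle of each fixed component has even real rank. Indeed, near a fixed component $F$ the local normal form presents the normal bundle as a sum of weight subbundles for the linearized $S^1$-action, each compatible with $\omega$ and hence carrying a complex structure; the negative normal bundle $\nu^-_F$ (whose rank is the Morse--Bott index $\mu_F$) is the sum of the negative-weight pieces and is therefore a complex, hence even-rank and canonically oriented, vector bundle. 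First I would record this even-index property together with the observation that $H^*_{S^1}(F)\cong H^*(F)\otimes H^*(BS^1)$ is a free $H^*(BS^1)$-module for every fixed component $F$, since $S^1$ acts trivially on $F$.

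Next I would pass through the critical levels one at a time. For a critical value $c$ with fixed components $F_1,\dots,F_k$ on $H^{-1}(c)$, the pair $(M^{\le c+\epsilon}, M^{\le c-\epsilon})$ $S^1$-equivariantly deformation retracts onto the Thom spaces of the bundles $\nu^-_{F_i}$, so the equivariant Thom isomorphism gives $H^*_{S^1}(M^{\le c+\epsilon}, M^{\le c-\epsilon})\cong\bigoplus_i H^{*-\mu_i}_{S^1}(F_i)$. The long exact sequence of the pair then reads
\[
\cdots \to \bigoplus_i H^{*-\mu_i}_{S^1}(F_i) \to H^*_{S^1}(M^{\le c+\epsilon}) \to H^*_{S^1}(M^{\le c-\epsilon}) \xrightarrow{\ \delta\ } \bigoplus_i H^{*+1-\mu_i}_{S^1}(F_i) \to \cdots.
\]
The key step --- and the main obstacle --- is to show that the connecting homomorphism $\delta$ vanishes, i.e.\ that $H$ is \emph{equivariantly perfect}, so that the sequence breaks into short exact sequences. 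I would establish this by the Atiyah--Bott argument: the composite of the Thom map with restriction back to $F_i$ is multiplication by the equivariant Euler class $e_{S^1}(\nu^-_i)\in H^*_{S^1}(F_i)$, and this class is not a zero divisor. Viewing $e_{S^1}(\nu^-_i)$ in $H^*(F_i)\otimes\R[\lambda]$, its top-degree coefficient in $\lambda$ is $\lambda^{\mu_i/2}$ times the product of the (nonzero, negative) $S^1$-weights, a nonzero element of $H^0(F_i)$; since $H^*_{S^1}(F_i)$ is a free, hence torsion-free, $\R[\lambda]$-module, multiplication by $e_{S^1}(\nu^-_i)$ is injective, which forces $\delta=0$. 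The even rank (orientability) of $\nu^-_i$ is precisely what lets the Thom isomorphism with untwisted coefficients apply.

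Granting perfection, induction on the critical levels exhibits $H^*_{S^1}(M)$ as an iterated extension of the free modules $H^*_{S^1}(F_i)$; since extensions of free modules over the principal ideal domain $H^*(BS^1)=\R[\lambda]$ are again free, $H^*_{S^1}(M)$ is a free $H^*(BS^1)$-module, and a rank comparison against the $E_2$-page $H^*(BS^1)\otimes H^*(M)$ of the Borel fibration $M\hookrightarrow M\times_{S^1}ES^1\to BS^1$ yields the isomorphism $H^*_{S^1}(M)\cong H^*(M)\otimes H^*(BS^1)$. Finally, freeness is equivalent to degeneration of this Serre spectral sequence at $E_2$, which is in turn equivalent to surjectivity of the fiber-restriction $f^*$ from \eqref{equation_Mbundle}; the kernel is then the image of multiplication by the base generator, namely $\lambda\cdot H^*_{S^1}(M)$, giving the last assertion. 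The only genuinely delicate point is the non-zero-divisor property of the equivariant Euler class, and I would reduce it, as above, to the fixed-point picture where the weights on $\nu^-_i$ are explicit nonzero integers.
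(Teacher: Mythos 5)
The paper offers no proof of this theorem---it is quoted from Kirwan \cite{Ki}---and your argument is a correct reconstruction of exactly the cited Kirwan/Atiyah--Bott proof: complex (hence even-rank, oriented) negative normal bundles, equivariant perfection of the moment map via the non-zero-divisor property of the equivariant Euler classes $e_{S^1}(\nu^-_i)$, and induction over critical levels to obtain freeness of $H^*_{S^1}(M)$ over $\R[\lambda]$. The only joint worth tightening is your final ``rank comparison'': freeness yields the surjectivity of $f^*$ and the kernel description most directly through the Gysin sequence of the $S^1$-bundle $M\times ES^1 \to M\times_{S^1}ES^1$, namely $\cdots \to H^{n-2}_{S^1}(M)\xrightarrow{\cdot\lambda} H^{n}_{S^1}(M)\xrightarrow{f^*} H^{n}(M)\to H^{n-1}_{S^1}(M)\to\cdots$, which splits into short exact sequences as soon as multiplication by $\lambda$ is injective (i.e.\ once torsion-freeness is known), and lifting a homogeneous basis of $H^*(M)$ then gives the decomposition $H^*_{S^1}(M)\cong H^*(M)\otimes H^*(BS^1)$ without any appeal to degeneration of the spectral sequence.
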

	
\subsection{Localization theorem}
\label{ssecLocalizationTheorem} 

	Thanks to the equivariant formality, for any homogeneous element $\alpha \in H^k_{S^1}(M)$, we may express $\alpha$ as
	\begin{equation}\label{equation_expression}
		 \alpha = \alpha_k \otimes 1 + \alpha_{k-2} \otimes \lambda + \alpha_{k-4} \otimes \lambda^2 + \cdots 
	\end{equation}
	where $\alpha_i \in H^i(M)$ for each $i = k, k-2, \cdots$. We then obtain $f^*(\alpha) = \alpha_k$ where $f$ is given in \eqref{equation_Mbundle}.

	\begin{definition}
		An \textit{integration along the fiber $M$} is an $H^*(BS^1)$-module homomorphism $\int_M : H^*_{S^1}(M) \rightarrow H^*(BS^1)$ defined by
		\[
			\int_M \alpha = \langle \alpha_k, [M] \rangle \cdot 1 + \langle \alpha_{k-2}, [M] \rangle \cdot x + \cdots 
		\]
		for every $ \alpha = \alpha_k \otimes 1 + \alpha_{k-2} \otimes \lambda + \alpha_{k-4} \otimes \lambda^2 + \cdots \in H^k_{S^1}(M).$ 
		Here, $[M] \in H_{2n}(M; \Z)$ denotes the fundamental homology class of $M$.
	\end{definition}

	Now, let $M^{S^1}$ be the fixed point set of the $S^1$-action on $M$ and let $F \subset M^{S^1}$ be a fixed component. Then the inclusion $i_F : F \hookrightarrow M$ 
	induces a ring homomorphism $$i_F^* : H^*_{S^1}(M) \rightarrow H^*_{S^1}(F) \cong H^*(F) \otimes H^*(BS^1).$$
	For any $\alpha \in H^*_{S^1}(M)$, we call the image $i_F^*(\alpha)$ \textit{the restriction of $\alpha$ to $F$} and denote by 
	\[
		\alpha|_F := i_F^*(\alpha).
	\] 
	Then we may compute $\int_M \alpha$ concretely by using the following theorem due to Atiyah-Bott \cite{AB} and Berline-Vergne \cite{BV}.

	\begin{theorem}[ABBV localization]\label{theorem_localization}
		For any $ \alpha \in H^*_{S^1}(M)$, we have
		\[
			\int_M \alpha = \sum_{F \subset M^{S^1}} \int_F \frac{\alpha|_F}{e^{S^1}(F)}
		\]
		where $e^{S^1}(F)$ is the equivariant Euler class of the normal bundle $\nu_F$ of $F$ in $M$. That is, $e^{S^1}(F)$ is the Euler class of the bundle 
		\[
			\nu_F \times_{S^1} ES^1 \rightarrow F \times BS^1.
		\] induced from the projection $\nu_F \times ES^1 \rightarrow F \times ES^1$.
	\end{theorem}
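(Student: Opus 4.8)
The plan is to run the algebraic localization argument of Atiyah--Bott, working over the field of fractions $\R(\lambda)$ of $\R[\lambda] = H^*(BS^1;\R)$. Write $\pi_M \colon M \to \mathrm{pt}$ for the equivariant collapse map, so that integration along the fiber is the equivariant Gysin pushforward, $\int_M = (\pi_M)_*$, and likewise $\int_F = (\pi_F)_*$ for $\pi_F \colon F \to \mathrm{pt}$. Since $\pi_M \circ i_F = \pi_F$, functoriality of the pushforward gives $(\pi_M)_* \circ (i_F)_* = (\pi_F)_*$; this is the bookkeeping that will assemble the right-hand sum at the end. The whole problem thus reduces to re-expressing a class $\alpha \in H^*_{S^1}(M)$ in terms of the images of the Gysin maps $(i_F)_*$.

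The first and main step is the \emph{localization isomorphism}: after inverting $\lambda$, the restriction $i^* \colon H^*_{S^1}(M)\otimes_{\R[\lambda]}\R(\lambda) \to H^*_{S^1}(M^{S^1})\otimes_{\R[\lambda]}\R(\lambda)$ is an isomorphism. The mechanism is that localized equivariant cohomology is insensitive to free orbits. Concretely, I would cover $M$ by an invariant tubular neighborhood $U$ of $M^{S^1}$ (which equivariantly retracts onto $M^{S^1}$) together with the open complement $M\setminus M^{S^1}$; their overlap is homotopy equivalent to the normal sphere bundle, on which $S^1$ acts \emph{freely} by semifreeness. For any $Y$ carrying a free $S^1$-action, $H^*_{S^1}(Y)\cong H^*(Y/S^1)$ is a $\lambda$-torsion $\R[\lambda]$-module and so vanishes after $\otimes_{\R[\lambda]}\R(\lambda)$. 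Localizing the Mayer--Vietoris sequence of this cover kills the contributions of $M\setminus M^{S^1}$ and of the overlap, leaving the isomorphism $H^*_{S^1}(M)\otimes\R(\lambda)\xrightarrow{\ \sim\ }H^*_{S^1}(U)\otimes\R(\lambda)\cong H^*_{S^1}(M^{S^1})\otimes\R(\lambda)$, which is $i^*$.

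Next I would record that each normal Euler class is a unit and invoke the self-intersection identity. Since $F$ is a component of the fixed set, $S^1$ acts on the fibers of $\nu_F$ with no zero weights, so $e^{S^1}(F) = i_F^*(i_F)_*(1)$ has nonzero top coefficient in $\lambda$ (the product of the weights) and hence is invertible in $H^*_{S^1}(F)\otimes\R(\lambda)$, the positive-degree part of $H^*(F)$ being nilpotent. The equivariant self-intersection formula $i_F^*\big((i_F)_*x\big) = x\cup e^{S^1}(F)$ then lets me invert the normal contribution componentwise, while $i_{F'}^*\big((i_F)_*\beta\big) = 0$ for $F'\neq F$ because the Gysin image is supported near $F$ and $F'$ is disjoint from $F$.

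Finally I would assemble these. By the localization isomorphism, over $\R(\lambda)$ I may write $\alpha = \sum_F (i_F)_*(\beta_F)$ for unique $\beta_F \in H^*_{S^1}(F)\otimes\R(\lambda)$; restricting to $F$ and using the two identities of the previous step gives $i_F^*\alpha = e^{S^1}(F)\cdot\beta_F$, so $\beta_F = \alpha|_F / e^{S^1}(F)$. Applying $(\pi_M)_*$ and the functoriality from the first paragraph yields
\[
    \int_M \alpha \;=\; \sum_{F\subset M^{S^1}} (\pi_F)_*\!\left(\frac{\alpha|_F}{e^{S^1}(F)}\right) \;=\; \sum_{F\subset M^{S^1}} \int_F \frac{\alpha|_F}{e^{S^1}(F)}.
\]
The hardest part is the localization isomorphism of the second paragraph; everything after it is formal. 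One point worth flagging explicitly is that, although each summand on the right lives a priori only in $\R(\lambda)$, the left-hand side is manifestly polynomial in $\lambda$, so the denominators must cancel in the total sum --- this is automatic from the derivation but deserves a remark.
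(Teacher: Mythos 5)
Your proposal is correct, and it is essentially the same proof underlying the statement here: the paper quotes Theorem \ref{theorem_localization} without proof, citing Atiyah--Bott \cite{AB} and Berline--Vergne \cite{BV}, and your argument (localize at the fraction field of $H^*(BS^1;\R)$, kill the free locus via Mayer--Vietoris so that $i^*$ becomes an isomorphism, invert $e^{S^1}(F)$ using nilpotence of positive-degree classes and the self-intersection formula $i_F^*(i_F)_*x = x\cup e^{S^1}(F)$, then push forward using $(\pi_M)_*\circ (i_F)_* = (\pi_F)_*$) is precisely the classical Atiyah--Bott proof, with the semifree hypothesis even simplifying the torsion argument on the complement of $M^{S^1}$. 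Your closing observation that the rational-function denominators must cancel because $\int_M\alpha$ is a priori polynomial in $\lambda$ is likewise the standard remark, so nothing is missing.
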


\subsection{Monotone symplectic manifolds}
\label{ssecMonotoneSymplecticManifolds}

	Now, we assume that $\omega$ is monotone and normalized, i.e., $c_1(TM) = [\omega]$. 
	\begin{definition}\label{definition_balanced}
		We call a moment map $H : M \rightarrow \R$ {\em balanced} if it satisfies
			\[
				H(Z) = -\Sigma(Z), \quad \quad \Sigma(Z) = ~\text{sum of weights of the $S^1$-action at $Z$}
			\]
		for every fixed component $Z \subset M^{S^1}$. 
	\end{definition}
	Note that there exists a unique balanced moment map. See \cite[Proposition 4.4]{Cho}. The following lemma is immediate from Definition \ref{definition_balanced}.
	
	\begin{lemma}\label{lemma_possible_critical_values}\cite[Lemma 5.9]{Cho} 
	Let $(M,\omega)$ be a six-dimensional closed monotone semifree $S^1$-manifold with the balanced moment map $H$.
	Then all possible critical values of $H$ are $\pm 3, \pm 2, \pm 1$, and $0$. Moreover, any connected component $Z$ of $M^{S^1}$ satisfies one of the followings : 
	\begin{table}[H]
		\begin{tabular}{|c|c|c|c|}
		\hline
		    $H(Z)$ & $\dim Z$ & $\mathrm{ind}(Z)$ & $\mathrm{Remark}$ \\ \hline 
		    $3$ &  $0$ & $6$ & $Z = Z_{\max} = \mathrm{point}$ \\ \hline
		    $2$ &  $2$ & $4$ & $Z = Z_{\max} \cong S^2$ \\ \hline
		    $1$ &  $4$ & $2$ & $Z = Z_{\max}$ \\ \hline
		    $1$ &  $0$ & $4$ & $Z = \mathrm{pt}$ \\ \hline
		    $0$ &  $2$ & $2$ & \\ \hline
		    $-1$ &  $0$ & $2$ & $Z = \mathrm{pt}$ \\ \hline
		    $-1$ &  $4$ & $0$ & $Z = Z_{\min}$ \\ \hline
		    $-2$ &  $2$ & $0$ & $Z = Z_{\min} \cong S^2$ \\ \hline
		    $-3$ &  $0$ & $0$ & $Z = Z_{\min} = \mathrm{point}$ \\ \hline
		\end{tabular}
		\vs{0.2cm}
		\caption{\label{table_fixed} List of possible fixed components}
		\end{table}

	\end{lemma}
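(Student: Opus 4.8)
The plan is to read off the whole of Table \ref{table_fixed} from the local isotropy data at each fixed component, using only the semifree hypothesis together with the balanced normalization of $H$.

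First I would record the two structural constraints forced by the hypotheses. Being the fixed locus of a symplectic circle action, each component $Z \subset M^{S^1}$ is a symplectic submanifold and hence even-dimensional; since the action is nontrivial we have $Z \neq M$, so $\dim Z \in \{0,2,4\}$. Writing $\dim Z = 2d$, the normal bundle $\nu_Z$ carries a fibrewise $S^1$-representation of complex rank $3 - d$, and I claim each of its weights equals $\pm 1$: a normal weight $w$ produces points arbitrarily close to $Z$ whose stabilizer is $\mathbb{Z}/|w|$, and semifreeness forces these stabilizers to be trivial, i.e. $|w| = 1$. Consequently the isotropy data at $Z$ is encoded by a single pair $(p,m)$ with $p + m = 3 - d$, where $p$ and $m$ count the normal weights equal to $+1$ and $-1$ respectively.

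Next I would express the invariants of $Z$ through $(p,m)$. The negative normal bundle has real rank $2m$, so $\mathrm{ind}(Z) = 2m$, while the sum of weights is $\Sigma(Z) = p - m$; hence by Definition \ref{definition_balanced} the balanced moment map satisfies $H(Z) = -\Sigma(Z) = m - p$. Enumerating the admissible pairs then reproduces the nine rows of Table \ref{table_fixed} verbatim: for $d = 0$ the pairs $(3,0),(2,1),(1,2),(0,3)$ give $(\mathrm{ind},H) = (0,-3),(2,-1),(4,1),(6,3)$; for $d = 1$ the pairs $(2,0),(1,1),(0,2)$ give $(0,-2),(2,0),(4,2)$; and for $d = 2$ the pairs $(1,0),(0,1)$ give $(0,-1),(2,1)$. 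In particular every critical value of $H$ lies in $\{\pm 3,\pm 2,\pm 1,0\}$, which is the first assertion.

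It remains only to justify the annotations marking the extremal components, and this is the single step that is not pure bookkeeping. A component with $m = 0$ has $\mathrm{ind}(Z) = 0$ and is therefore a local minimum of $H$, while a component with $p = 0$ has vanishing positive normal bundle and is a local maximum. To upgrade these local statements to global uniqueness I would use that $H$ is a perfect Morse-Bott function (a consequence of equivariant formality, Theorem \ref{theorem_equivariant_formality}): matching the coefficient of $t^0$ in $\sum_Z t^{\mathrm{ind}(Z)} P_t(Z)$ against $P_t(M)$ gives $\sum_{\mathrm{ind}(Z) = 0} b_0(Z) = b_0(M) = 1$, so there is exactly one index-$0$ component, which must be $Z_{\min}$; applying the same argument to $-H$ shows there is a unique local maximum, namely $Z_{\max}$. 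This singles out precisely the rows labelled $Z_{\min}$ and $Z_{\max}$ in Table \ref{table_fixed}. I expect this final identification to be the only genuine obstacle: the weight enumeration is entirely forced, whereas the assertion that the index-$0$ and coindex-$0$ rows must be the global extrema rests on the connectedness/perfection input rather than on the pointwise isotropy data alone.
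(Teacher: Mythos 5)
Your weight-counting core is correct and is essentially the intended argument: the paper does not reprove this lemma (it cites \cite[Lemma 5.9]{Cho} and calls it immediate from Definition \ref{definition_balanced}), and that proof is exactly the bookkeeping you describe. Semifreeness forces all normal weights to be $\pm 1$, the pair $(p,m)$ with $p+m=3-d$ gives $\mathrm{ind}(Z)=2m$ and $H(Z)=m-p$, and your enumeration reproduces the nine rows; the identification of the index-$0$ and coindex-$0$ rows with $Z_{\min}$ and $Z_{\max}$ via perfection of the Morse--Bott function (Theorem \ref{theorem_equivariant_formality}), or alternatively via Atiyah's connectedness of level sets, is also fine.

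However, there is a genuine gap: you never address the assertions $Z_{\max}\cong S^2$ and $Z_{\min}\cong S^2$ in the rows $H(Z)=\pm 2$, and these cannot be extracted from the isotropy data, the balanced condition, or perfection --- monotonicity must enter in an essential way. Indeed, $\Sigma_g\times \p^1\times\p^1$ with the diagonal rotation of $\p^1\times\p^1$ is a semifree Hamiltonian $S^1$-manifold whose extremal fixed components are two-dimensional of arbitrary genus $g$; it is only the hypothesis $c_1(TM)=[\omega]$ that excludes $g\geq 1$. The missing step is the chain the paper itself uses at the beginning of Section \ref{secReducedSpacesNearTheExtremum}: by Li's theorem \cite[Theorem 0.1]{Li1}, $\pi_1(Z_{\min})\cong\pi_1(Z_{\max})\cong\pi_1(M_0)$; the reduced space $M_0$ at level $0$ is a smooth symplectic $4$-manifold (Proposition \ref{proposition_GS}) which inherits monotonicity from $\omega$ (Proposition \ref{proposition_monotonicity_preserved_under_reduction}), hence is diffeomorphic to a del Pezzo surface by Ohta--Ono \cite{OO2} (Corollary \ref{corollary_monotone_reduced_space}) and is in particular simply connected; therefore a two-dimensional extremal component has trivial fundamental group and must be a sphere. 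Without this (or an equivalent use of monotonicity), the table's $S^2$ annotations remain unproved.
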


	Another important fact on the balanced moment map is that the monotonicity property of the reduced symplectic form $\omega_0$ at level zero is inherited from $\omega$.
	
	\begin{proposition}\label{proposition_monotonicity_preserved_under_reduction}\cite[Proposition 4.8, Remark 4.9]{Cho}
		Let $(M,\omega)$ be a semifree Hamiltonian $S^1$-manifold with $c_1(TM) = [\omega]$ and $H$ be the balanced moment map.
		If the symplectic reduction is defined at level zero, then $(M_0, \omega_0)$ is a monotone symplectic manifold with $[\omega_0] = c_1(TM_0)$
	\end{proposition}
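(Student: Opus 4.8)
The plan is to prove the single identity $c_1(TM_0) = [\omega_0]$ in $H^2(M_0;\R)$ by lifting the comparison between the symplectic class and the first Chern class to $S^1$-equivariant cohomology, where the defining relation of the balanced moment map becomes transparent. Write $Z := H^{-1}(0)$, let $\iota : Z \hookrightarrow M$ be the inclusion, and—assuming first that $0$ is a regular value, so that the $S^1$-action on $Z$ is free—let $\pi : Z \to M_0$ be the quotient principal bundle, so that $\pi^*\omega_0 = \iota^*\omega$. Over $Z$ the complex line subbundle $D := \langle \underline{X}, J\underline{X}\rangle$ spanned by the fundamental vector field and its $J$-image is trivialized by the nowhere-vanishing (since $dH \ne 0$ on $Z$) invariant section $\underline{X}$, while its $J$-orthogonal complement is $S^1$-invariant and descends to $TM_0$. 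Thus $\iota^*TM \cong_{\C} \pi^*TM_0 \oplus \underline{\C}$ as $S^1$-equivariant complex bundles, the last summand being equivariantly trivial.

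On the equivariant side, equivariant formality (Theorem \ref{theorem_equivariant_formality}) lets me work with two canonical degree-two classes: the equivariant symplectic class $\tilde\omega := [\omega + H\lambda] \in H^2_{S^1}(M)$ (equivariantly closed because $\omega(\underline{X},\cdot) = dH$) and the equivariant first Chern class $c_1^{S^1}(TM)$. I would prove $c_1^{S^1}(TM) = \tilde\omega$. Restricting to any fixed component $Z'$ gives $\tilde\omega|_{Z'} = [\omega]|_{Z'} + H(Z')\lambda$ and $c_1^{S^1}(TM)|_{Z'} = c_1(TM)|_{Z'} + \Sigma(Z')\lambda$, where $\Sigma(Z')$ is the sum of the normal weights. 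The $\lambda^0$-coefficients agree because $c_1(TM) = [\omega]$, and the $\lambda^1$-coefficients agree precisely by the balanced relation $H(Z') = -\Sigma(Z')$ of Definition \ref{definition_balanced} (with the sign convention fixing $-\lambda$ as the Euler class of the universal bundle). Since for an equivariantly formal space the restriction $H^*_{S^1}(M) \to \bigoplus_{Z'} H^*_{S^1}(Z')$ to the fixed set is injective, the two classes coincide.

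Finally, I apply the Kirwan map $\kappa : H^*_{S^1}(M) \xrightarrow{\iota^*} H^*_{S^1}(Z) \cong H^*(M_0)$. Because $H \equiv 0$ on $Z$, the $\lambda$-term of $\tilde\omega$ is killed and $\kappa(\tilde\omega) = [\omega_0]$; because the extra summand in $\iota^*TM \cong \pi^*TM_0 \oplus \underline{\C}$ is equivariantly trivial, $\kappa(c_1^{S^1}(TM)) = c_1(TM_0)$. Comparing the two images of the single class $c_1^{S^1}(TM) = \tilde\omega$ yields $c_1(TM_0) = [\omega_0]$, i.e.\ $(M_0,\omega_0)$ is monotone. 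It is exactly here that the balanced normalization does the work: any other choice $H + c$ would reduce at a different level, shifting $[\omega_0]$ by $c\,e$ through the Duistermaat--Heckman relation \eqref{equation_DH} while leaving $c_1(TM_0)$ unchanged, so the equality can hold at only one level, namely the balanced level $H = 0$.

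The main obstacle is the case in which $0$ is a \emph{critical} value, i.e.\ $Z$ contains a two-dimensional fixed component (the $H(Z)=0$ row of Table \ref{table_fixed}). Then $S^1$ no longer acts freely on $Z$, the splitting $\iota^*TM \cong \pi^*TM_0 \oplus \underline{\C}$ breaks down along the fixed surface, and the naive Kirwan computation fails. Here I would instead use that the reduced space remains smooth (Proposition \ref{proposition_GS}) and compare $M_0$ with a nearby regular reduction through the Guillemin--Sternberg blow-up/blow-down description, tracking the first Chern class under the exceptional-divisor correction and the symplectic class under \eqref{equation_DH}; the two corrections are arranged to cancel, so that monotonicity at the regular level propagates to $M_0$. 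Making this cancellation precise—equivalently, checking that the equivariant identity $c_1^{S^1}(TM) = \tilde\omega$ still descends correctly through a singular level—is the delicate point of the argument.
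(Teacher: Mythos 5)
Your regular-level argument is correct, and it is essentially the argument behind the cited proof in \cite{Cho}: extend $\omega$ to an equivariantly closed class, compare it with $c_1^{S^1}(TM)$ by restricting to the fixed point set (this is exactly where the balanced condition $H(Z)=-\Sigma(Z)$ and the hypothesis $c_1(TM)=[\omega]$ enter), invoke injectivity of restriction to $M^{S^1}$ (which follows from equivariant formality), and push the resulting identity through the Kirwan map at level zero, using the equivariant splitting $TM|_{H^{-1}(0)}\cong \pi^*TM_0\oplus \underline{\C}$. One small point: with the paper's conventions ($-\lambda$ is the Euler class of the universal Hopf bundle, and, e.g., $c_1^{S^1}(TM)|_{Z_{\min}}=(2+b_{\min})u+2\lambda$ while $H(Z_{\min})=-2$), the correct equivariantly closed extension is $[\omega - H\lambda]$ rather than $[\omega+H\lambda]$; this is the sign ambiguity you flagged and it does not affect the structure of the argument.

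The step you leave open --- the case where $0$ is a critical value --- is genuinely needed (in most cases of this paper, Cases II and IV, level zero carries fixed surfaces), but it is considerably less delicate than your sketch suggests, and in particular there is no ``exceptional-divisor correction'' to cancel. By Lemma \ref{lemma_possible_critical_values}, any fixed component at level zero is two-dimensional, hence of codimension four in $M$ and codimension two in the reduced space; Corollary \ref{corollary_Euler_class_zero_level} then says that the reduced space does not change at all across level zero (the blow-up along a codimension-two center is trivial), so no exceptional divisor appears. Your regular-level computation, applied at a small regular value $t>0$, gives the identity $c_1(TM_t)=[\omega_t]+t\,e(P_t)$ (one can sanity-check the sign against case {\bf (I-1)}: there $[\omega_t]=(2-t)x+(2+t)y$, $e(P_t)=x-y$, and indeed $[\omega_t]+t\,e(P_t)=2x+2y=c_1(TM_t)$). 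Under the identification $M_t\cong M_0$, the Duistermaat--Heckman relation \eqref{equation_DH} extends continuously to the critical endpoint, $[\omega_t]=[\omega_0]-t\,e(P_0^+)$ (cf.\ \cite[Lemma 5]{McD1}), and $c_1(TM_t)=c_1(TM_0)$ because the reduced forms vary through a continuous family of symplectic forms on the fixed smooth manifold $M_0$. Combining, $c_1(TM_0)=c_1(TM_t)=[\omega_t]+t\,e(P_0^+)=[\omega_0]$, which is precisely the cancellation you asserted; this is the content of \cite[Remark 4.9]{Cho}. (Blow-ups with genuine exceptional divisors would only arise for fixed components of codimension at least six at an interior level, which cannot occur in the six-dimensional setting where this proposition is used.)
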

		
	By Proposition \ref{proposition_monotonicity_preserved_under_reduction} and Ohta-Ono's classification \cite{OO2} of closed monotone symplectic four manifolds, 
	we obtain the following.
	
	\begin{corollary}\label{corollary_monotone_reduced_space}
		Let $(M,\omega)$ be a six-dimensional closed monotone semifree $S^1$-manifold with the balanced moment map $H$. Then $M_0$ is diffeomorphic to a del Pezzo surface, i.e., 
		\[
			M_0 \cong \p^2, \p^1 \times \p^1, ~\text{or}~X_k \quad (k \leq 8)
		\]
		where $X_k$ denotes the $k$-points blow-up of $\p^2$.
	\end{corollary}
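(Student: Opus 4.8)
The plan is to identify $M_0$ as a \emph{closed} monotone symplectic four-manifold and then quote the classification of such manifolds due to Ohta--Ono, so that the bulk of the work is simply assembling the two propositions already established together with \cite{OO2}. Before doing so, however, I would first confirm that the symplectic reduction at level zero is genuinely defined, i.e. that $0$ lies in the interior of the image of the balanced moment map $H$. Since the action is semifree, at the minimum $Z_{\min}$ all the normal weights of the isotropy representation equal $+1$ (there are no negative weights because $\mathrm{ind}(Z_{\min}) = 0$), so $\Sigma(Z_{\min}) > 0$ and hence $H(Z_{\min}) = -\Sigma(Z_{\min}) < 0$; symmetrically $H(Z_{\max}) > 0$. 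This is also read off directly from Table \ref{table_fixed}, where every maximal component sits at a positive level and every minimal component at a negative level. Consequently $\min H < 0 < \max H$, so $H^{-1}(0) \neq \emptyset$ and $0$ is never an extremal value.

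Next, because $\dim M = 6$ and the action is semifree, Proposition \ref{proposition_GS} together with the remark following it guarantees that $M_0 = H^{-1}(0)/S^1$ is a smooth manifold \emph{regardless} of whether $0$ is a regular or a critical value of $H$; it is closed and of real dimension four, and it is connected since the level set of the moment map of a Hamiltonian $S^1$-action on a compact connected symplectic manifold is connected. I would then invoke Proposition \ref{proposition_monotonicity_preserved_under_reduction}: as $(M,\omega)$ is monotone with $c_1(TM) = [\omega]$ and $H$ is balanced, the reduced form satisfies $[\omega_0] = c_1(TM_0)$, so $(M_0,\omega_0)$ is a closed monotone symplectic four-manifold. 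Finally, Ohta--Ono's classification \cite{OO2} asserts that every such manifold is diffeomorphic to a del Pezzo surface, and up to diffeomorphism the del Pezzo surfaces are precisely $\p^2$, $\p^1 \times \p^1$, and the $k$-point blow-ups $X_k$ of $\p^2$ with $k \leq 8$, which is exactly the claimed list.

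The only non-formal inputs are the two cited propositions and Ohta--Ono's theorem, so there is no hard computation to grind through. The step requiring the most care is the first one: I must verify that level zero is never extremal (so that the reduction is genuinely a four-dimensional space rather than a point or a surface), and I must rely on the fact that smoothness of the reduced space at a possibly \emph{critical} level is a phenomenon special to the six-dimensional semifree setting, as encoded in Proposition \ref{proposition_GS}; in higher dimensions or without the semifree hypothesis this step would fail and the argument would not go through as stated.
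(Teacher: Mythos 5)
Your proposal is correct and follows essentially the same route as the paper, which obtains the corollary in one line by combining Proposition \ref{proposition_monotonicity_preserved_under_reduction} with Ohta--Ono's classification \cite{OO2}. The extra verifications you supply --- that $0$ lies strictly between $\min H$ and $\max H$ (via the sign of $\Sigma(Z_{\min})$, $\Sigma(Z_{\max})$ for a balanced moment map) and that $M_0$ is smooth even at a critical level by Proposition \ref{proposition_GS} --- are exactly the hypotheses the paper leaves implicit, so they strengthen rather than alter the argument.
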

		
\section{Fixed point data}
\label{secFixedPointData}

	In \cite{Li2, Li3}, Li introduced the notion of a {\em fixed point data} (or FD shortly) for some particular semifree Hamiltonian $S^1$-manifold and Gonzalez \cite{G} defined it in more general context.
	Also, the author \cite{Cho} defined a {\em topological fixed point data} (or TFD for short). 
	In this section, we briefly overview the notions {\em FD} and {TFD} of a closed semifree Hamiltonian $S^1$-manifold 
	and explain how the fixed poitn data determines a manifold up to $S^1$-equivariant symplectomorphism.
	We also refer to \cite[Section 5]{Cho} for more detail. 
	
\subsection{Slices and Gluing}
\label{ssecSlicesAndGluing}
	Any closed Hamiltonian $S^1$-manifold can be decomposed into {\em slices} and, conversely, a family of slices with certain compatible conditions 
	determines a closed Hamiltonian $S^1$-manifold . More precisely, let $(M,\omega)$ be a Hamiltonian $S^1$-manifold with a moment map $H : M \rightarrow I \subset \R$. 
	Assume that the critical values of $H$ are given by 
	\[
		\min H = c_1 < \cdots < c_k = \max H.
	\]	
	Then, $M$ can be decomposed into a union of Hamiltonian $S^1$-manifolds $\{ (N_j, \omega_j) \}_{1 \leq j \leq 2k-1}$ with boundaries :
	\[
		N^{2j-1} = H^{-1}(\underbrace{[c_j - \epsilon, c_j + \epsilon]}_{ =: I_{2j-1}}), \quad N^{2j} = H^{-1}(\underbrace{[c_j + \epsilon, c_{j+1} - \epsilon]}_{=: I_{2j}})
	\]
	where $\epsilon > 0$ is chosen to be sufficiently small so that $I_{2j-1}$ contains exactly one critical value $c_j$ of $H$ for each $j$.
	 We call those $N^{2j-1}$'s and $N^{2j}$'s {\em critical and regular slices}, respectively.
	 
	 \begin{definition}\label{definition_regular_slice}\cite{G} \cite[Definition 5.1, 5.2]{Cho} 
	 	\begin{enumerate}
			\item A {\em regular slice} $(N,\sigma,K, I)$ is a free Hamiltonian $S^1$-manifold $(N, \sigma)$ with boundary and 
				$K : N \rightarrow I$ is a surjective proper moment map where $I = [a,b]$ is a closed interval. 

			\item A {\em critical slice} $(N, \sigma, K, I)$ is a semifree Hamiltonian $S^1$-manifold $(N, \sigma)$ with boundary together with a surjective proper moment map 
				$K : N \rightarrow I = [a,b]$ such that there exists exactly one critical value $c \in [a,b]$ satisfying one the followings : 
				\begin{itemize}
					\item (interior slice) $c \in (a,b)$, 
					\item (maximal slice) $c = b$ and $K^{-1}(c)$ is a critical submanifold, 
					\item (minimal slice) $c = a$ and $K^{-1}(c)$ is a critical submanifold. 
				\end{itemize}
			\item An interior critical slice is called {\em simple} if every fixed component in $K^{-1}(c)$ has the same Morse-Bott index. 
		\end{enumerate}
	\end{definition}
	
	Two slices $(N_1,\sigma_1,K_1, I_1)$ and $(N_2,\sigma_2,K_2, I_2)$ are said to be 
	{\em isomorphic} if there exists an $S^1$-equivariant symplectomorphism $\phi : (N_1, \sigma_1) \rightarrow (N_2, \sigma_2)$ satisfying 
	\[
		\xymatrix{N_1 \ar[r]^{\phi} \ar[d]_{K_1} & N_2 \ar[d]^{K_2} \\ I_1 \ar[r]^{ + k} & I_2}
	\]
	where $+k$ denotes the translation map as the addition of some constant $k \in \R$. The following lemma tells us when two slices can be glued along their boundaries.

	\begin{lemma}\cite[Lemma 13]{Li3}\cite[Lemma 1.2]{McD2}\label{lemma_gluing}
		Two slices $(N_1, \sigma_1, K_1, [a,b])$ and $(N_2, \sigma_2, K_2, [b,c])$ can be glued along $K_i^{-1}(b)$ if there exists a diffeomorphism 
		\[
			\phi : (N_1)_b \rightarrow (N_2)_b, \quad \quad (N_i)_b := K_i^{-1}(b) / S^1
		\]
		such that 
		\begin{itemize}
			\item $\phi^* (\sigma_2)_b = (\sigma_1)_b$, and 
			\item $\phi^* (e_2)_b = (e_1)_b$ 
		\end{itemize}
		where $(\sigma_i)_b$ and $(e_i)_b$ denote the reduced symplectic form on $(N_i)_b$ and the Euler class of the principal $S^1$-bundle 
		$K_i^{-1}(b) \rightarrow (N_i)_b$, respectively. 
	\end{lemma}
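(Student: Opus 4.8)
The plan is to produce an $S^1$-equivariant symplectomorphism identifying collar neighborhoods of the two boundary level sets $K_1^{-1}(b)$ and $K_2^{-1}(b)$, and to use it to glue. Since $b$ is a regular value of both $K_1$ and $K_2$, each $P_i := K_i^{-1}(b)$ is a smooth hypersurface on which $S^1$ acts freely, hence $P_i \to (N_i)_b$ is a principal $S^1$-bundle with Euler class $(e_i)_b$. The point is that near a regular level the germ of an $S^1$-Hamiltonian structure is completely determined by the reduced symplectic form at that level together with the Euler class of the level set, so the two hypotheses $\phi^*(\sigma_2)_b = (\sigma_1)_b$ and $\phi^*(e_2)_b=(e_1)_b$ are precisely what is needed.

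First I would lift $\phi$ to the total spaces. Principal $S^1$-bundles over a fixed base are classified up to isomorphism by their Euler class in $H^2(-;\Z)$, so $\phi^*(e_2)_b=(e_1)_b$ says that $P_1$ and the pullback $\phi^*P_2$ carry the same Euler class over $(N_1)_b$ and are therefore isomorphic. This yields an $S^1$-equivariant diffeomorphism $\Phi : P_1 \to P_2$ covering $\phi$. I would then fix a connection one-form $\theta_2$ on $P_2$ whose curvature represents $(e_2)_b$ and set $\theta_1 := \Phi^*\theta_2$, so that $\Phi$ intertwines the two connections and the curvature of $\theta_1$ represents $\phi^*(e_2)_b = (e_1)_b$.

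Next I would invoke the local normal form near a regular level (the minimal-coupling model underlying the Duistermaat--Heckman theorem \eqref{equation_DH}). For a principal $S^1$-bundle $P\to B$ with connection $\theta$ and a symplectic form $\sigma_B$ on $B$, the two-form
\[
\omega_{\mathrm{mod}} = \pi^*\sigma_B + d(t\,\theta)
\]
on $P\times(-\delta,\delta)$, with moment map $t$, is a free Hamiltonian $S^1$-manifold whose reduced form at level $t$ equals $\sigma_B - t\,e$ (for $e$ representing the Euler class, up to the usual sign convention), in agreement with \eqref{equation_DH}. By the equivariant Moser / coisotropic-embedding argument, a one-sided collar of $P_i$ in $N_i$ is $S^1$-equivariantly symplectomorphic, intertwining moment maps, to the corresponding half $P_i\times(-\delta,0]$ or $P_i\times[0,\delta)$ of the model built from $\big(P_i,(\sigma_i)_b,\theta_i\big)$.

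Finally I would glue. Using $\Phi$ to identify $P_1\times\{0\}$ with $P_2\times\{0\}$ and the connection $\theta_1=\Phi^*\theta_2$, the two one-sided germs — that of $N_1$ on $\{t\le 0\}$ and that of $N_2$ on $\{t\ge 0\}$ — assemble into a single two-sided model $(P\times(-\delta,\delta),\omega_{\mathrm{mod}})$. In particular the symplectic forms, the $S^1$-actions, and the moment maps match across $P$, so $N := N_1\cup_\Phi N_2$ inherits a smooth symplectic form $\sigma$, a smooth $S^1$-action, and a moment map $K$ with $K|_{N_i}=K_i$; this is the desired glued slice. The main obstacle is exactly this normal-form step: one must show that agreement of the reduced form at the single level $b$ together with the Euler class forces the full germs of the two $S^1$-Hamiltonian structures to agree. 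This is where the linear variation \eqref{equation_DH} of the reduced form is essential — it propagates the level-$b$ match to all nearby levels — and where care is needed to keep the Moser isotopy $S^1$-equivariant and moment-map-preserving, so that the glued object is genuinely a Hamiltonian $S^1$-manifold rather than merely a smooth one.
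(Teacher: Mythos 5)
The paper does not actually prove this lemma --- it is quoted from \cite{Li3} and \cite{McD2} --- and your argument reconstructs essentially the proof given in those references: the Euler-class hypothesis lifts $\phi$ to an $S^1$-equivariant bundle isomorphism $\Phi : K_1^{-1}(b) \rightarrow K_2^{-1}(b)$, the equivariant coisotropic-embedding (local normal form) theorem identifies one-sided collars of the boundary level sets with the two halves of the minimal-coupling model $\bigl(P \times (-\delta,\delta),\ \pi^*(\sigma_1)_b + d(t\,\theta)\bigr)$, and the slices are then glued through that model, consistently with \eqref{equation_DH}. Your proposal is correct and takes the same route as the cited proofs, so there is nothing essential to add.
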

	
	Now, suppose that $\frak{S} = \{ (N_i, \sigma_i, K_i, [a_i,b_i]) \}$ be a finite family of slices with gluing data 
	\[
		\Phi := \{\phi_i : (N_i)_{b_i} \rightarrow (N_{i+1})_{a_{i+1}}\}
	\]
	satisfying the conditions in Lemma \ref{lemma_gluing}. Then $(\frak{S}, \Phi)$ determines a closed Hamiltonian $S^1$-manifold denoted by $M(\frak{S}, \Phi)$. 
	Note that $M(\frak{S}, \Phi)$ may not be $S^1$-equivariantly symplectomorphic (nor even diffeomorphic) to $M(\frak{S}, \Phi')$ for a different choice of gluing data $\Phi'$.
	
\subsection{Fixed point data}
\label{ssecFixedPointData}	
	Now, consider a six-dimensional closed monotone symplectic manifold $(M,\omega)$
	equipped with an effective semifree Hamiltonian $S^1$-action. 
	We further assume that $c_1(TM) = [\omega]$ so that there exists a (unique) balanced moment map $H : M \rightarrow \R$ for the action defined in Definition \ref{definition_balanced}.
	
	\begin{definition}\cite[Definition 1.2]{G}\label{definition_fixed_point_data} 
		A {\em fixed point data} (or {\em FD} shortly) of $(M,\omega, H)$, denoted by $\frak{F}(M, \omega, H)$, is a collection 
		\[
			 \frak{F} (M, \omega, H) := \left\{(M_{c}, \omega_c, Z_c^1, Z_c^2, \cdots,  Z_c^{k_c}, e(P_{c}^{\pm})) ~|~c \in \mathrm{Crit} ~H \right\}
		\]
		which consists of the information below.
		\begin{itemize}
			\item $(M_c, \omega_c)$\footnote{$M_c$ is smooth manifold under  the assumption that the action is semifree and the dimension of $M$ is six.
				See Proposition \ref{proposition_GS}.} is the symplectic reduction at level $c$.
			\item $k_c$ is the number of fixed components on the level $c$. 
			\item Each $Z_c^i$ is a connected fixed component and hence a symplectic submanifold of $(M_c, \omega_c)$ via the embedding
				\[
					Z_c^i \hookrightarrow H^{-1}(c) \rightarrow H^{-1}(c) / S^1 = M_c.
				\]
				(This information contains a normal bundle of $Z_c^i$ in $M_c$.)
			\item The Euler class $e(P_c^{\pm})$ of principal $S^1$-bundles $H^{-1}(c \pm \epsilon) \rightarrow M_{c \pm \epsilon}$.
		\end{itemize}		
	\end{definition}
	
	\begin{definition}\cite[Definition 2.13]{McD2}\cite[Definition 1.4]{G}\label{definition_rigid} A manifold $B$ is said to be {\em symplectically rigid} if 
	\begin{itemize}
		\item (uniqueness) any two cohomologous symplectic forms are diffeomorphic, 
		\item (deformation implies isotopy) every path $\omega_t$ ($t \in [0,1]$) of symplectic forms such that $[\omega_0] = [\omega_1]$ can be homotoped through families of symplectic forms 
		with the fixed endpoints $\omega_0$ and $\omega_1$ to an isotopy, that is, a path $\omega_t'$ such that $[\omega_t']$ is constant in $H^2(B)$. 
		\item For every symplectic form $\omega$ on $B$, the group $\text{Symp}(B,\omega)$ of symplectomorphisms that act trivially on $H_*(B;\Z)$ is path-connected.
	\end{itemize}
	\end{definition}
	
	As we have seen in Section \ref{ssecSlicesAndGluing}, the $S^1$-equivariant symplectomorphism class of a Hamiltonian $S^1$-manifold $M(\frak{S}, \Phi)$ constructed from a given 
	family $\frak{S}$ of slices depends on the choice of a gluing data $\Phi$. The following theorem due to Gonzalez states that $M(\frak{S}, \Phi)$ only depends on the fixed point data
	of the action on $M(\frak{S}, \Phi)$ if every reduced space is symplectically rigid.
	
	\begin{theorem}\cite[Theorem 1.5]{G}\label{theorem_Gonzalez_5}
		Let $(M,\omega)$ be a six-dimensional closed semifree Hamiltonian $S^1$-manifold such that every critical level is simple\footnote{
		A critical level is called {\em simple} if every fixed component in the level set has a common Morse-Bott index.}.
		Suppose further that every reduced space is symplectically rigid.
		Then $(M,\omega)$ is determined by its fixed point data up to $S^1$-equivariant symplectomorphism.
	\end{theorem}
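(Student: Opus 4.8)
The plan is to reconstruct $(M,\omega)$ from its fixed point data slice by slice, using the decomposition into critical and regular slices from Section \ref{ssecSlicesAndGluing}, and then to show that the only remaining freedom — the choice of gluing data — is killed precisely by symplectic rigidity. Concretely, given two such manifolds $(M,\omega,H)$ and $(M',\omega',H')$ with identical fixed point data, write the common critical values as $c_1 < \cdots < c_k$ and decompose both manifolds as in Section \ref{ssecSlicesAndGluing} into critical slices $N^{2j-1}$ and regular slices $N^{2j}$.

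First I would argue that each individual slice is determined up to isomorphism by the fixed point data. For a regular slice $K^{-1}([a,b])$ the Duistermaat--Heckman theorem \eqref{equation_DH} shows that the reduced spaces $M_t$ for $t \in [a,b]$ are all canonically diffeomorphic and that both $[\omega_t]$ and the Euler class evolve affinely in $t$; a Moser argument then identifies the entire slice with the standard model built from $(M_a, \omega_a, e(P_a^{+}))$. For a critical slice the local normal form near the fixed component $Z_c$ — equivalently the blow-up/blow-down description of Proposition \ref{proposition_GS} — shows that the slice is determined by the embeddings $Z_c^i \hookrightarrow M_c$ as symplectic submanifolds together with the Euler classes $e(P_c^{\pm})$, which is exactly the data recorded in the fixed point data. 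Hence, slice by slice, the two decompositions are isomorphic.

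The crux is the gluing. By Lemma \ref{lemma_gluing}, adjacent slices can be glued whenever there is a diffeomorphism of the common boundary reduced space intertwining the reduced symplectic forms and the Euler classes, and the fixed point data guarantees that such identifications exist at every interface. The difficulty is that the gluing data $\Phi$ is far from unique: the two given manifolds correspond to possibly different choices $\Phi$ and $\Phi'$, and a priori $M(\frak{S},\Phi)$ and $M(\frak{S},\Phi')$ need not even be diffeomorphic. I would therefore build the desired equivariant symplectomorphism inductively from the minimum upward. Having matched the two manifolds over $H^{-1}([\min H, t])$, the obstruction to extending across the next slice is the ``difference'' of the two boundary identifications, a self-diffeomorphism of the reduced space preserving both $\omega_t$ and the Euler class. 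This is exactly where Definition \ref{definition_rigid} enters: uniqueness lets me assume the two reduced symplectic forms agree on the nose, deformation-implies-isotopy replaces a homotopy of forms by an honest isotopy, and path-connectedness of the group of symplectomorphisms acting trivially on homology makes the boundary self-map isotopic to the identity through symplectomorphisms. Covering this isotopy in the principal $S^1$-bundle — possible because the Euler classes match — and propagating it across the adjacent regular slice by Moser's method extends the identification one slice further.

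The main obstacle I expect is this last propagation step: an isotopy produced on one boundary of a regular slice must be extended through the slice without disturbing the identification already fixed on the opposite boundary, and simultaneously it must be realized $S^1$-equivariantly upstairs. Controlling both ends at once forces a careful relative (rel-boundary) version of the Moser/rigidity argument, and it is the reason all three clauses of symplectic rigidity — not merely uniqueness of symplectic forms — are needed. Once the induction reaches $H^{-1}(\max H)$ one obtains a global $S^1$-equivariant symplectomorphism $M \to M'$, proving that the fixed point data determines $(M,\omega)$ up to $S^1$-equivariant symplectomorphism.
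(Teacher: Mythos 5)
This theorem is never proved in the paper: it is imported from Gonzalez \cite[Theorem 1.5]{G}, and Section \ref{secFixedPointData} only assembles the surrounding framework (slices, Lemma \ref{lemma_gluing}, Definition \ref{definition_rigid}). Measured against that framework and against the argument in \cite{G}, your architecture --- uniqueness of each slice, then an induction over slices in which the residual gluing ambiguity is killed by rigidity --- is indeed the intended one, so the overall route is right.

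Two steps, however, have genuine holes. First, for critical slices you appeal to the equivariant local normal form, but that only determines an $S^1$-invariant neighborhood of the fixed set $Z_c$, not the whole slice $H^{-1}([c-\epsilon,c+\epsilon])$; showing that the entire critical slice is determined by $(M_c,\omega_c,Z_c^i,e(P_c^{\pm}))$ is the technical core of \cite{G}, and it is where the first two clauses of Definition \ref{definition_rigid} do real work (they cannot simply be deferred to the gluing stage, as your sketch does). Second, and more seriously, in the gluing step you apply the third clause of rigidity to the ``difference'' self-map $\psi$ of a boundary reduced space. That clause concerns only symplectomorphisms acting trivially on $H_*(B;\Z)$, and nothing you have established forces $\psi_*=\mathrm{id}$: $\psi$ is known to preserve $[\omega_t]$, the Euler class, and the homology classes of the fixed components, but these classes need not span $H^2$ of the reduced space, and the reduced spaces occurring here ($S^2\times S^2$, $\p^2 \# k\overline{\p^2}$) do admit symplectomorphisms acting nontrivially on homology --- the factor swap on monotone $S^2\times S^2$, or a map exchanging two equal-area exceptional classes. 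Until $\psi$ is shown, or arranged by renormalizing the slice identifications, to act trivially on homology, the isotopy to the identity you need does not exist and the induction stalls at the first interface. Supplying that normalization is precisely the extra work carried out in \cite{G} (see also \cite{McD2}), and it is the one missing idea in your proposal.
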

	
	\begin{remark}\label{remark_Gonzalez_5}
		 Note that Theorem \ref{theorem_Gonzalez_5} is a six-dimensional version of the original statement of Theorem \cite[Theorem 1.5]{G}
		 so that we may drop ``(co)-index two'' condition in his original statement because every non-extremal fixed component has index two or co-index two in a six-dimensional case.
		 In addition, if $\omega$ is monotone, then the condition ``simpleness'' is automatically satisfied by Lemma \ref{lemma_possible_critical_values}.
	 \end{remark}

	For proving Theorem \ref{theorem_main}, 
	we need to 
	\begin{itemize}
		\item classify all possible fixed point data $\frak{F}$, 
		\item show the existence of the corresponding Hamiltonian $S^1$-manifold having the fixed point data $\frak{F}$, 
		\item show that every reduced space is symplectically rigid.
	\end{itemize} 
	However, the classification of fixed point data is extremely difficult as it 
	involves the classification problem of all symplectic embeddings of each fixed component of $Z_c$ into a reduced space $(M_c, \omega_c)$. 
	Thus, instead of a fixed point data, we introduce the notion ``{\em topological fixed point data}'', which is a topological analogue of a fixed point data, as follows. 
				
	\begin{definition}\label{definition_topological_fixed_point_data}\cite[Definition 5.7]{Cho}
		Let $(M,\omega)$ be a six-dimensional closed semifree Hamiltonian $S^1$-manifold equipped with a moment map $H : M \rightarrow I$ such that all critical level sets are simple.
		A {\em topological fixed point data} (or {\em TFD} for short) of $(M,\omega, H)$, denoted by $\frak{F}_{\text{top}}(M, \omega, H)$, is defined as a collection 
		\[
			 \frak{F}_{\text{top}}(M, \omega, H) := \left\{(M_{c}, [\omega_c], \mathrm{PD}(Z_c^1), \mathrm{PD}(Z_c^2), \cdots, \mathrm{PD}(Z_c^{k_c}), e(P_c^{\pm}) ) ~|~c \in
			  \mathrm{Crit} ~H \right\}
		\]
		where 
		\begin{itemize}
			\item $(M_c, \omega_c)$ is the reduced symplectic manifold at level $c$, 
			\item $k_c$ is the number of fixed components at level $c$, 
			\item each $Z_c^i$ is a connected fixed component lying on the level $c$ and $\mathrm{PD}(Z_c^i) \in H^*(M_c)$ 
			denotes the Poincar\'{e} dual class of the image of the embedding
				\[
					Z_c^i \hookrightarrow H^{-1}(c) \rightarrow H^{-1}(c) / S^1 = M_c.
				\]
			\item the Euler class $e(P_c^{\pm})$ of principal $S^1$-bundles $H^{-1}(c \pm \epsilon) \rightarrow M_{e \pm \epsilon}$.
		\end{itemize}		
	\end{definition}
	
	The classification of TFD is relatively much more easier than the classification of FD as we will see later.
	Indeed, we will classify all possible 
	TFD for a semifree Hamiltonian circle action on a six-dimensional monotone symplectic manifold. (See Table \ref{table_list} for the full list of TFD.)
	
	On the other hand, there is one more critical issue. In general, it is not obvious whether a TFD determines a FD uniquely. 
	Namely, for two candidates $Z_c^1$ and $Z_c^2$ of a fixed component
	in $(M_c,\omega_c)$ representing a same homology class, it is not guaranteed the existence of a symplectomorphism (nor a diffeomorphism)
	\[
		\psi : (M_c, \omega_c) \rightarrow (M_c, \omega_c), \quad \quad \psi(Z_c^1) = \psi(Z_c^2). 
	\]
	In Section \ref{secMainTheorem}, we will show that each TFD determines FD uniquely in our situation, and therefore TFD becomes a complete invariant for a semifree Hamiltonian
	circle action on a six-dimensional closed monotone symplectic manifold.

\section{Reduced spaces near the extremum}
\label{secReducedSpacesNearTheExtremum}

	This section is devoted to collect
	some information of a reduced space near an extremum such as a cohomology ring structure and the symplectic area. These materials would be 
	used in the rest of the paper. 
	
	Let $(M,\omega)$ be a six-dimensional closed monotone semifree Hamiltonian $S^1$-manifold with the balanced moment map $H$. 
	We assume that all extremal fixed components are two-dimensional, i.e., $H(Z_{\min}) = -2$ and $H(Z_{\max}) = 2$. 
	Thanks to Li's theorem \cite[Theorem 0.1]{Li1}, 
	we have 
	\[
		\pi_1(Z_{\max}) \cong \pi_1(Z_{\min}) \cong \pi_1(M_0), 
	\]
	which implies that $Z_{\max} \cong Z_{\min} \cong S^2$ as in Lemma \ref{lemma_possible_critical_values} (since $M_0$ is simply connected.)

	Observe that the only possible non-extremal critical values are $\{\pm1, 0\}$
	and each non-extremal fixed component $Z$ is either 
	\[
		\begin{cases}
			\text{$Z$ = pt} \hspace{1cm} \text{if $H(Z) = \pm 1$, \quad or} \\
			\text{$\dim Z = 2$} \quad \text{if $H(Z) = 0$.}
		\end{cases}
	\]
	by Lemma  \ref{lemma_possible_critical_values}. 
	Moreover, since the moment map $H$ is a perfect Morse-Bott function, we may easily deduce that
	\[
		|Z_1| = |Z_{-1}|
	\]
	by the Poincar\'{e} duality. 
			
	We follow Li's notations in \cite{Li2} and \cite{Li3}. 
	For a sufficiently small $\epsilon > 0$, the level set $H^{-1}(-2+\epsilon)$ becomes an $S^3$-bundle over $Z_{\min}$ with the induced fiberwise free $S^1$-action.
	(This can be shown using the {\em equivariant Darboux theorem} and the explicit formula of the moment map, see \cite[Theorem 2.1, Section 4.1]{Cho}.) 
	Thus the reduced space $M_{-2 + \epsilon}$ near $Z_{\min}$ is an $S^2$-bundle over $S^2$ and hence diffeomorphic to 
	either $S^2 \times S^2$ or a Hirzebruch surface which we denote by $E_{S^2}$. 
	
	When $M_{-2 + \epsilon} \cong S^2 \times S^2$, regarded as a trivial $S^2$-bundle over $Z_{\min} \cong S^2$, let $x$ and $y$ in $H^2(M_{-2 + \epsilon} ;\Z)$ be the dual classes of the fiber $S^2$ and the base $Z_{\min}$, respectively. Then
	\[
		\langle xy, [M_{-2 + \epsilon}] \rangle = 1, \quad \langle x^2, [M_{-2 + \epsilon}] \rangle = \langle y^2, [M_{-2 + \epsilon}] \rangle = 0.
	\]
	Similarly, when $M_{-2 + \epsilon} \cong E_{S^2}$ regarded as a non-trivial $S^2$-bundle over $Z_{\min}$, let $x$ and $y$ be the dual of the fiber $S^2$ and the base respectively
	so  that
	\[
		\langle xy, [M_{-2 + \epsilon}] \rangle = 1, \quad \langle x^2, [M_{-2 + \epsilon}] \rangle = 0, \quad \langle y^2, [M_{-2 + \epsilon}] \rangle = -1.
	\]
	In this notation, we have $c_1(T(S^2 \times S^2)) = 2x + 2y$ and $c_1(TE_{S^2}) = 3x+2y$, respectively. 
	
	The following lemma describes the relation between the Euler class of a level set (as a principal $S^1$-bundle) near the extremal fixed components 
	$Z_{\min}$ and $Z_{\max}$ of the action and the first Chern numbers of the normal bundles 
	of them.

\begin{lemma}\cite[Lemma 6, 7]{Li2}\label{lemma_Euler_extremum}
	Let $b_{\min}$ (respectively $b_{\max}$) be the first Chern number of the normal bundle of $Z_{\min}$ (respectively $Z_{\max}$) in $M$. 
	Also, we let $x$ and $y$ be the dual classes of the fiber and the base of the bundle $M_{-2+\epsilon} \rightarrow Z_{\min}$ (respectively $M_{2 - \epsilon} \rightarrow Z_{\max}$).
	Then $M_{-2 + \epsilon}$ (respectively $M_{2 - \epsilon}$) is a trivial $S^2$-bundle if and only if $b_{\min} = 2k$ (respectively $b_{\max} = 2k$), and it is diffeomorphic to $E_{S^2}$
	if and only if $b_{\min} = 2k+1$ (respectively $b_{\max} = 2k+1$) for some $k \in \Z$. In either case, we have 
	\[
		e(P_{-2}^+) = kx - y \quad \quad \left(\text{respectively} \hs{0.1cm} e(P_2^-) = -kx + y \right)
	\]
	where $e(P_t^{\pm})$ denote the Euler class of the principal $S^1$-bundle $\pi_{t \pm \epsilon} :  P_t^{\pm} = H^{-1}(t \pm \epsilon) \rightarrow 
	M_{t \pm \epsilon}$.
	In particular, we have
	\[
		\langle e(P_{-2}^+)^2, [M_{-2 + \epsilon}] \rangle = -b_{\min} \quad \quad \left( \text{respectively} \hs{0.2cm} \langle e(P_2^-)^2, [M_{2 - \epsilon}] \rangle = -b_{\max} \right).
	\]	
\end{lemma}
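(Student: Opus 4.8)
The plan is to reduce everything to an explicit model for the level set $H^{-1}(-2+\epsilon)$ coming from the equivariant local normal form near $Z_{\min}$, and then to read off the Euler class from the geometry of the associated projective bundle. Since the action is semifree and $Z_{\min}$ is a minimum (Morse--Bott index $0$), the isotropy representation on the normal bundle $\nu$ of $Z_{\min}$ has all weights equal to $+1$; as $Z_{\min}\cong S^2$ has real codimension $4$, the bundle $\nu$ is a complex rank-two bundle on which $S^1$ acts fiberwise by scalar multiplication $z\cdot v = zv$. By the equivariant Darboux theorem and the explicit formula for $H$ (\cite[Theorem 2.1, Section 4.1]{Cho}), for small $\epsilon>0$ the level set $H^{-1}(-2+\epsilon)$ is equivariantly the unit sphere bundle $S(\nu)$ with this Hopf action, so the reduced space is the projectivization $M_{-2+\epsilon}=S(\nu)/S^1=\pp(\nu)$, a $\pp^1$-bundle over $Z_{\min}$. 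Crucially, the principal bundle $P_{-2}^+=S(\nu)\to\pp(\nu)$ is precisely the unit circle bundle of the tautological line bundle $\mathcal{O}_{\pp(\nu)}(-1)$, so that $e(P_{-2}^+)=c_1(\mathcal{O}_{\pp(\nu)}(-1))$.

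Next I would split $\nu\cong L_1\oplus L_2$ (every complex bundle over $S^2$ splits into line bundles) with $c_1(L_i)=a_i$ and $a_1+a_2=b_{\min}$. The diffeomorphism type of $\pp(L_1\oplus L_2)\cong\pp(\mathcal{O}\oplus(L_2\otimes L_1^{-1}))$ is the Hirzebruch surface determined by $a_1-a_2$, and since two Hirzebruch surfaces are diffeomorphic iff their indices agree mod $2$, and $a_1-a_2\equiv a_1+a_2=b_{\min}\pmod 2$, the reduced space is the trivial bundle $S^2\times S^2$ when $b_{\min}$ is even and $E_{S^2}$ when $b_{\min}$ is odd. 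This already yields the first assertion of the lemma.

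For the Euler class I would compute intersection numbers in $\pp(\nu)$. Writing $f$ for the fiber class and $\sigma_1,\sigma_2$ for the two holomorphic sections $\pp(L_1),\pp(L_2)$, one has $\langle e(P_{-2}^+),f\rangle=-1$ (the restriction of $\mathcal{O}(-1)$ to a fiber is $\mathcal{O}_{\pp^1}(-1)$, i.e.\ the Hopf bundle), $\langle e(P_{-2}^+),\sigma_i\rangle=c_1(L_i)=a_i$ (since $\mathcal{O}(-1)|_{\sigma_i}=L_i$), together with $\sigma_1^2=a_2-a_1$, $\sigma_1\cdot f=1$, and $f^2=0$. A direct, basis-free computation then gives $\langle e(P_{-2}^+)^2,[M_{-2+\epsilon}]\rangle=-(a_1+a_2)=-b_{\min}$, which is the ``in particular'' claim. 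Finally, expressing $e(P_{-2}^+)$ in the standard generators $x=\pd(\text{fiber})$, $y=\pd(\text{base})$ --- where ``base'' is the smooth self-intersection $0$ section of $S^2\times S^2$ (resp.\ the self-intersection $(-1)$ section of $E_{S^2}$), not the holomorphic $\sigma_i$ --- the relation $\langle e(P_{-2}^+),f\rangle=-1$ forces the $y$-coefficient to be $-1$, and pairing with the base section together with $\langle e(P_{-2}^+)^2\rangle=-b_{\min}$ forces the $x$-coefficient to be exactly the integer $k$ with $b_{\min}\in\{2k,2k+1\}$, giving $e(P_{-2}^+)=kx-y$. The statement for $Z_{\max}$ follows by the symmetry $(M,\omega,H)\mapsto(M,\omega,-H)$, which interchanges minimum and maximum and inverts the circle (so the weights at $Z_{\max}$ are all $-1$), flipping the relevant signs to give $e(P_2^-)=-kx+y$ and $\langle e(P_2^-)^2\rangle=-b_{\max}$.

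The conceptual content is short; the real work is sign and orientation bookkeeping, and this is where I expect the difficulty to lie. The two delicate points are (i) pinning down that the weights at $Z_{\min}$ are all $+1$ and hence that $P_{-2}^+$ is the circle bundle of the tautological (not the dual) line bundle, which is what makes the fiber restriction equal $-1$ and ultimately produces the overall minus sign in $\langle e(P_{-2}^+)^2\rangle=-b_{\min}$; and (ii) matching the intrinsic holomorphic sections $\sigma_i$, whose self-intersections $\pm(a_1-a_2)$ need not vanish, with the paper's standard smooth generators $x,y$ of $H^2$, whose normalization ($\langle y^2\rangle=0$ or $-1$) is realized by a \emph{different} section when $|a_1-a_2|\ge 2$. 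Once these normalizations are fixed consistently with the conventions of Section \ref{secBackground}, the formula $e(P_{-2}^+)=kx-y$ and the parity dichotomy drop out simultaneously from the single invariant computation $\langle e(P_{-2}^+)^2\rangle=-b_{\min}$.
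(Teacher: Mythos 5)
Your proof is correct. One structural remark: the paper does not prove this lemma at all --- it is imported directly from Li \cite[Lemmas 6, 7]{Li2} --- so there is no internal argument to compare against; what you have written is a legitimate self-contained derivation, and it follows the standard route (essentially Li's): identify $H^{-1}(-2+\epsilon)$ with the sphere bundle $S(\nu)$ via the equivariant normal form, identify $M_{-2+\epsilon}$ with $\pp(\nu)$ and $P_{-2}^+$ with the unit circle bundle of the tautological bundle $\mathcal{O}_{\pp(\nu)}(-1)$, split $\nu=L_1\oplus L_2$, and compute. Your sign bookkeeping is consistent with the paper's conventions: the normalization in Section \ref{secBackground} that the universal Hopf bundle has Euler class $-\lambda$ is precisely your convention $e(S(L))=c_1(L)$, and the resulting formula $e(P_{-2}^+)=kx-y$ with $\langle e(P_{-2}^+),f\rangle=-1$ is the one compatible with the Duistermaat--Heckman relation \eqref{equation_DH} as it is used in the proof of Theorem \ref{theorem_I_1} (the fiber area $2+t$ increases in $t$, matching $\tfrac{d}{dt}\langle[\omega_t],f\rangle=-\langle e,f\rangle=1$). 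The two delicate points you flag --- the weights $+1$ at the minimum forcing the tautological rather than the dual bundle, and the discrepancy between the holomorphic sections $\sigma_i$ (self-intersection $\pm(a_1-a_2)$) and the paper's normalized generators $x,y$ --- are both resolved correctly by your basis-free computation $\langle e(P_{-2}^+)^2,[M_{-2+\epsilon}]\rangle=-(a_1+a_2)=-b_{\min}$, after which the coefficients in $e=px+qy$ are pinned down by $q=-1$ and $p=(b_{\min}+\langle y^2\rangle)/2=k$ in both the even and odd cases.
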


The monotonicity\footnote{A symplectic form $\omega$ is {\em monotone} if $c_1(TM) = \lambda [\omega] \in H^2(M; \R)$ for some $\lambda \in \R_{>0}$.} of $\omega$ implies the following.

\begin{corollary}\label{corollary_volume_extremum}
	Let $(M,\omega)$ be a six-dimensional closed semifree Hamiltonian $S^1$-manifold. Suppose that $c_1(TM) = [\omega]$. 
	If the minimal fixed component $Z_{\min}$ (respectively $Z_{\max}$) is diffeomorphic to $S^2$, then 
	\[
		b_{\min} \geq -1 \quad \text{(respectively ~$b_{max} \geq -1$)}.
	\]
\end{corollary}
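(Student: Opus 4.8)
The plan is to combine the monotonicity hypothesis $c_1(TM)=[\omega]$ with the elementary fact that $Z_{\min}$ is a symplectic surface of strictly positive symplectic area, using the splitting of $c_1(TM)$ along $Z_{\min}$ into a tangential and a normal contribution. This is an adjunction-type computation, and monotonicity is exactly what converts the positivity of the area into the desired lower bound on $b_{\min}$.

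First I would record that $Z_{\min}$, being a fixed component of the Hamiltonian $S^1$-action, is a symplectic submanifold of $(M,\omega)$, so that $\int_{Z_{\min}}\omega>0$. By the normalization $c_1(TM)=[\omega]$ this yields
\begin{equation*}
	\langle c_1(TM), [Z_{\min}]\rangle = \langle [\omega], [Z_{\min}]\rangle = \int_{Z_{\min}}\omega > 0.
\end{equation*}
Next, restricting the tangent bundle to $Z_{\min}$ gives the splitting $TM|_{Z_{\min}} = TZ_{\min}\oplus \nu_{Z_{\min}}$, where $\nu_{Z_{\min}}$ is the normal bundle, whence $c_1(TM)|_{Z_{\min}} = c_1(TZ_{\min}) + c_1(\nu_{Z_{\min}})$. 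Pairing with the fundamental class and using the hypothesis $Z_{\min}\cong S^2$ produces $\langle c_1(TZ_{\min}), [Z_{\min}]\rangle = \chi(S^2) = 2$, while $\langle c_1(\nu_{Z_{\min}}), [Z_{\min}]\rangle = b_{\min}$ by the definition of $b_{\min}$ in Lemma \ref{lemma_Euler_extremum}. Combining,
\begin{equation*}
	0 < \langle c_1(TM), [Z_{\min}]\rangle = 2 + b_{\min},
\end{equation*}
so $b_{\min} > -2$, and since $b_{\min}\in\Z$ we conclude $b_{\min}\geq -1$. The case of $Z_{\max}$ is verbatim the same, replacing $Z_{\min}$ by $Z_{\max}$ throughout.

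There is essentially no serious obstacle here; the only points requiring care are bookkeeping. One must confirm the conventions, namely that $b_{\min}$ is the first Chern number of the (correctly oriented) normal bundle and that $Z_{\min}$ inherits the orientation making it a positively oriented symplectic surface, so that $\chi(S^2)=2$ enters with the right sign. The decisive input is the strict positivity $\int_{Z_{\min}}\omega>0$ together with the identity $c_1(TM)=[\omega]$, i.e. monotonicity; without it one only controls the area, not $b_{\min}$. As a consistency check, the resulting integer $b_{\min}$ agrees with the one appearing in Lemma \ref{lemma_Euler_extremum}, where $\langle e(P_{-2}^+)^2, [M_{-2+\epsilon}]\rangle = -b_{\min}$, so the bound $b_{\min}\geq -1$ is exactly the constraint needed to rule out reduced spaces near the extremum with too negative a normal Chern number.
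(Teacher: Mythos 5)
Your proof is correct and follows essentially the same route as the paper: both split $TM|_{Z_{\min}}$ into tangent and normal parts to identify $\int_{Z_{\min}}\omega = \langle c_1(TM),[Z_{\min}]\rangle = 2 + b_{\min}$, and then conclude $b_{\min}\geq -1$ from positivity of the symplectic area together with integrality. No gaps; the paper's proof is just a compressed version of your argument.
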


\begin{proof}
	Note that the symplectic volume of $Z_{\min}$ (respectively $Z_{\max}$) is given by 
	\[
		\int_{Z_{\min}} \omega = 2 + b_{\min} \quad \quad \left( \text{respectively} \hs{0.1cm} \int_{Z_{\max}} \omega = 2 + b_{\max} \right)
	\]
	which follows from the fact that the restriction of the tangent bundle $\left. TM \right|_{Z_\bullet}$ splits into the sum of the tangent bundle and the normal bundle 
	of $Z_{\bullet}$ where $\bullet = \min$ or $\max$. Then the proof is straightforward by the positivity of symplectic area and the fact that $\omega$ is integral.
\end{proof}

\begin{remark}\label{remark_bminbmax}
	If we take the new Hamiltonian $S^1$-action ``$*$'' on $M$ by 
	\[	
		t * p := t^{-1} \cdot p, \quad \quad p \in M,
	\]
	then the balanced moment map becomes $-H$ so that the maximal (resp. minimal) fixed component becomes the minimal (resp. maximal)
	one. Therefore, we only need to classify TFD under the assumption that 
	\begin{equation}\label{equation_assumption}
		b_{\min} \leq b_{\max}.
	\end{equation}
	Then any case with ``$b_{\min} > b_{\max}$'' can be recovered from one in our classification by taking a ``reversed'' $S^1$-action.
\end{remark}

 The following lemma due to McDuff will be useful in the rest sections.

	\begin{lemma}\label{lemma_list_exceptional}\cite[Section 2]{McD2}
		Let $X_k$ be the $k$-times simultaneous symplectic blow-up of $\p^2$ with the exceptional divisors $C_1, \cdots, C_k$.
		We denote by $E_i := \mathrm{PD}(C_i)  \in H^2(M_0; \Z)$ the dual classes, called {\em exceptional classes}.
		Then all possible exceptional classes are listed as follows (modulo permutations of indices) : 
			\[
				\begin{array}{l}
					E_1, u - E_{12},  \quad 2u - E_{12345}, \quad 3u - 2E_1 - E_{234567}, \quad 4u - 2E_{123} - E_{45678}  \\ \vs{0.1cm}
					5u - 2E_{123456}  - E_{78},  \quad 6u - 3E_1 - 2E_{2345678}  \\
				\end{array}
			\]
		Here, $u$ is the positive generator of $H^2(\p^2; \Z)$ and $E_{j \cdots n} := \sum_{i=j}^n E_i$. Furthermore, elements involving $E_i$ do not appear in $X_k$ with $k < i$. 
	\end{lemma}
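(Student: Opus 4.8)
The plan is to characterize exceptional classes by two numerical constraints coming from adjunction, to enumerate the integral solutions by an elementary Cauchy--Schwarz bound, and finally to invoke a representability theorem to see that every solution is actually realized by an embedded symplectic $(-1)$-sphere. Recall that $H^2(X_k;\Z)$ is freely generated by $u, E_1, \dots, E_k$ with intersection form $u\cdot u = 1$, $E_i\cdot E_j = -\delta_{ij}$, $u\cdot E_i = 0$, and that $c_1(TX_k) = 3u - E_1 - \cdots - E_k$. By definition an exceptional class $E$ is represented by a symplectically embedded sphere $C$ of self-intersection $-1$, so the (symplectic) adjunction formula applied to $C$ yields
\[
	E\cdot E = -1, \qquad \langle c_1(TX_k), E\rangle = 1.
\]
Writing $E = a u + \sum_i b_i E_i$ these become $a^2 - \sum_i b_i^2 = -1$ and $3a + \sum_i b_i = 1$, which is the system I would analyze.

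First I would settle the signs geometrically. The case $a = 0$ forces $\sum_i b_i^2 = 1$ and $\sum_i b_i = 1$, hence $E = E_i$ for a single index, which is the first entry of the list. For $a\neq 0$ the class $E$ is distinct from every $E_i$, and since two distinct embedded symplectic $(-1)$-spheres meet non-negatively, $E\cdot E_i = -b_i \geq 0$; thus every $b_i \leq 0$, and writing $b_i = -n_i$ with $n_i\geq 0$ turns the equations into
\[
	\sum_i n_i^2 = a^2 + 1, \qquad \sum_i n_i = 3a - 1.
\]
Since the $n_i$ are non-negative, $3a-1 = \sum_i n_i \geq 0$ forces $a\geq 1$, and after permuting indices I may assume $n_1 \geq n_2 \geq \cdots \geq 0$. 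I emphasize that the sign reduction is a geometric, not purely numerical, step: the system above admits mixed-sign integral solutions that are simply not represented by irreducible $(-1)$-spheres.

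Next comes the enumeration, which is where the range $k\leq 8$ enters decisively. Cauchy--Schwarz gives $(3a-1)^2 = \left(\sum_i n_i\right)^2 \leq k\sum_i n_i^2 = k(a^2+1)$; for $k\leq 8$ this reads $a^2 - 6a - 7 \leq 0$, so $a\leq 7$, and a short minimization of $\sum_i n_i^2$ at fixed $\sum_i n_i$ excludes $a=7$ (eight non-negative integers summing to $20$ have square-sum at least $52 > 50$). Running through $a = 1, \dots, 6$ and solving $\sum_i n_i = 3a-1$, $\sum_i n_i^2 = a^2+1$ in non-negative integers with at most $k$ nonzero terms then recovers, up to permutation, exactly the six remaining displayed types, each of which turns out to be unique within $k\leq 8$. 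The side condition ``at most $k$ nonzero $n_i$'' is precisely the assertion that a class involving $E_i$ cannot occur in $X_k$ for $k<i$, which disposes of the final clause. I would also note that for $k\geq 9$ the Cauchy--Schwarz inequality degenerates and $a$ becomes unbounded, consistent with $X_k$ failing to be del Pezzo.

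The remaining and genuinely hard point is the converse: that each numerical solution above is \emph{actually} realized by an embedded symplectic $(-1)$-sphere, so that the necessary conditions are also sufficient. I expect this to be the main obstacle, as it is not a combinatorial statement. The cleanest route is to use the Weyl group $W$ generated by the reflection in the root $u - E_1 - E_2 - E_3$ together with the permutation roots $E_i - E_{i+1}$: these reflections preserve both $E\cdot E$ and $\langle c_1(TX_k), E\rangle$, hence permute exceptional classes, and the seven listed types are precisely the orbits under the permutation subgroup of the manifestly representable class $E_1$. Representability of the entire $W$-orbit then follows from the Gromov--Taubes and Seiberg--Witten theory for symplectic rational surfaces, which guarantees that any class $E$ with $E\cdot E = -1$, $\langle c_1(TX_k), E\rangle = 1$ and positive symplectic area is represented by an embedded symplectic sphere. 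This analytic input, rather than the elementary enumeration, is the crux of the argument.
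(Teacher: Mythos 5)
Your proposal is correct, and the arithmetic checks out, but there is nothing in the paper to compare it against: the paper never proves this lemma. It is imported wholesale, with a citation, from \cite[Section 2]{McD2}, and is ultimately the classical description of the exceptional classes on del Pezzo surfaces. So where the paper supplies a reference, you supply an actual proof, and the route you take is the standard one: adjunction gives $E\cdot E=-1$ and $\langle c_1(TX_k),E\rangle=1$; writing $E=au+\sum_i b_iE_i$ and reducing to non-negative $n_i=-b_i$ gives $\sum_i n_i=3a-1$, $\sum_i n_i^2=a^2+1$; Cauchy--Schwarz with at most $8$ nonzero entries gives $(a-7)(a+1)\le 0$; convexity kills $a=7$ (your bound $52>50$ is right); and the enumeration for $a=0,1,\dots,6$ produces exactly the seven displayed types --- I checked case by case that each solution is unique up to permutation, as you claim. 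Your remark that the bound degenerates for $k\ge 9$ is also correct and is precisely why the list is finite only for $k\le 8$. What the citation buys the paper is brevity; what your argument buys is a self-contained, checkable enumeration of a list that the paper leans on repeatedly in Sections 6--8.

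Two caveats. First, the step ``two distinct embedded symplectic $(-1)$-spheres meet non-negatively'' is not a fact about arbitrary symplectic spheres: positivity of intersections is a statement about distinct irreducible $J$-holomorphic curves for a \emph{common} $J$. To invoke it you must first represent $E$ and all the $E_i$ simultaneously by embedded $J$-spheres for one generic $J$, which is exactly the Gromov--Taubes input (exceptional classes have nonzero Gromov invariant). So the analytic machinery you defer to the realizability step is already needed in your sign reduction. Alternatively, the geometry can be avoided there entirely: if some $b_j>0$ then $\sum_i|b_i|\ge 3a+1$, and Cauchy--Schwarz gives $(3a+1)^2\le 8(a^2+1)$, forcing $a\le 1$, where mixed signs are excluded by inspection; this closes the enumeration purely arithmetically. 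Second, the realizability direction that you call the crux is never used in the paper --- every application of the lemma is of the containment direction, namely that a vanishing cycle or fixed-component class \emph{must} appear on the list. Realizability is only needed to know the list is sharp, and for that your Weyl-orbit-plus-Taubes argument is indeed the standard route.
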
 

We divide the classification process into four cases: $\mathrm{Crit} ~\mathring{H} = \emptyset, \{0\}, \{-1, 1\},$ and $\{-1,0,1\}$. \\

\section{Case I : $\mathrm{Crit} ~\mathring{H} = \emptyset$}
\label{secCaseIMathrmCritMathringHEmptyset}

	In this section, we classify all TFD in the case where $\mathrm{Crit} \mathring{H} = \emptyset$. 
	Also, for each TFD, we give the corresponding example of a Fano variety with an explicit holomorphic $S^1$-action on it. 
	Note that $M_{-2 + \epsilon} \cong M_0 \cong M_{2 - \epsilon}$.
	
\begin{theorem}\label{theorem_I_1}
	Let $(M,\omega)$ be a six-dimensional closed monotone semifree Hamiltonian $S^1$-manifold with $c_1(TM) = [\omega]$.
	Suppose that $\mathrm{Crit} H = \{ 2, -2\}$. Then the only possible 
	topological fixed point data is given by 	
		\begin{table}[H]
			\begin{tabular}{|c|c|c|c|c|c|c|c|}
				\hline
				    & $(M_0, [\omega_0])$ & $e(P_{-2}^+)$ &$Z_{-2}$  & $Z_2$ & $b_2$ & $c_1^3$ \\ \hline \hline
				    {\bf (I-1)} & $(S^2 \times S^2, 2x + 2y)$ & $x-y$  &$S^2$ &  $S^2$ & $1$ & $64$\\ \hline    
			\end{tabular}
			\vs{0.5cm}			
			\caption{\label{table_I_1} Topological fixed point data for $\mathrm{Crit} H = \{-2, 2\}$}			
		\end{table}
			\vs{-0.7cm}
\end{theorem}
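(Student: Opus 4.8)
The plan is to exploit the fact that $\mathrm{Crit}\,\mathring H=\emptyset$: with no interior critical level the gradient flow identifies all the reduced spaces $M_t$ ($t\in(-2,2)$) with a single $4$-manifold $M_0$, and both the Euler class and the reduced symplectic class vary in the simplest possible way. I would then read off every entry of Table~\ref{table_I_1} from the two $S^2$-bundle structures on $M_0$ induced by the extrema.

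First I would record the ambient geometry. By the discussion preceding Lemma~\ref{lemma_Euler_extremum}, $M_{-2+\epsilon}\cong M_0\cong M_{2-\epsilon}$ is an $S^2$-bundle over $Z_{\min}\cong S^2$, hence diffeomorphic to $\p^1\times\p^1$ or to $E_{S^2}$; by Proposition~\ref{proposition_monotonicity_preserved_under_reduction} it is monotone with $[\omega_0]=c_1(TM_0)$, so $[\omega_0]=2x+2y$ on $\p^1\times\p^1$ and $[\omega_0]=3x+2y$ on $E_{S^2}$. Writing $e:=e(P_{-2}^+)=kx-y$ as in Lemma~\ref{lemma_Euler_extremum} (so $\langle e^2,[M_0]\rangle=-b_{\min}$ with $\langle e,x\rangle=-1$ and $\langle e,y\rangle=k$), the Duistermaat--Heckman relation \eqref{equation_DH} gives $[\omega_t]=[\omega_0]-te$ for all $t\in(-2,2)$.

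The decisive mechanism is the collapse of ruling fibres at the two ends. Near $Z_{\min}$ the $\p^1$-fibre of $M_0\to Z_{\min}$, of class $x$, shrinks as $t\to-2^+$ (indeed $\langle[\omega_{-2}],x\rangle=\langle[\omega_0],x\rangle+2\langle e,x\rangle=0$), and symmetrically some ruling-fibre class of $M_0$ shrinks as $t\to2^-$. Since a ruling fibre $C$ satisfies $C^2=0$ and $\langle c_1(TM_0),C\rangle=2$ by adjunction, on $E_{S^2}$ such a class is \emph{unique}; the two collapsing fibres would then coincide in a class $C$ for which $t\mapsto\langle[\omega_t],C\rangle$ is a nonzero affine function vanishing at both $t=\pm2$, which is impossible because $\langle[\omega_0],C\rangle>0$. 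Hence $M_0\cong\p^1\times\p^1$. There the ruling-fibre classes are exactly $x$ and $y$; since $\langle[\omega_2],x\rangle=2-2\langle e,x\rangle=4\neq0$, the fibre collapsing at the maximum must be $y$, and imposing $\langle[\omega_2],y\rangle=2-2k=0$ forces $k=1$. Thus $e=x-y$ and $b_{\min}=b_{\max}=2$, as in the table.

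The numerical invariants then follow by routine computation: with $[\omega_0]=2x+2y$ and $e=x-y$ one has $[\omega_t]=(2-t)x+(2+t)y$, so the Duistermaat--Heckman volume is $f(t)=\tfrac12\langle[\omega_t]^2,[M_0]\rangle=4-t^2$ and $c_1^3=\langle[\omega]^3,[M]\rangle=3!\int_{-2}^{2}(4-t^2)\,dt=64$, while the Morse--Bott function $H$, whose only critical manifolds are $Z_{\min}$ (index $0$) and $Z_{\max}$ (index $4$), gives Poincar\'e polynomial $1+t^2+t^4+t^6$, whence $b_2(M)=1$. I expect the main obstacle to be the bookkeeping in the third paragraph: one must carry the two $S^2$-bundle structures on $M_0$ (over $Z_{\min}$ and over $Z_{\max}$) inside a single cohomology basis and justify the fibre-collapse conditions from the local normal form, since this is exactly where $E_{S^2}$ is excluded and where $k$ is pinned down; once that is settled the monotonicity and volume computations are forced.
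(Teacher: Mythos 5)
Your proposal is correct, and its skeleton matches the paper's: both reduce to the dichotomy $M_0\cong \p^1\times\p^1$ or $E_{S^2}$ coming from Lemma \ref{lemma_Euler_extremum}, write $e(P_{-2}^+)=kx-y$, and feed the Duistermaat--Heckman relation \eqref{equation_DH} into the degeneration at $t=\pm2$. But you execute the two key steps differently. To exclude $E_{S^2}$ and pin down $k$, the paper simply imposes $\lim_{t\to2}\int_{M_t}[\omega_t]^2=0$: on $E_{S^2}$ this forces $k=\tfrac12\notin\Z$, and on $S^2\times S^2$ it gives $k=1$. You instead use the collapse of the ruling fibre over $Z_{\max}$ together with the observation that $E_{S^2}$ has a \emph{unique} integral class with square $0$ and $c_1=2$, so the same class would have to collapse at both ends, making $t\mapsto\langle[\omega_t],C\rangle$ an affine function vanishing at $t=\pm2$ yet positive at $t=0$. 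Both are sound; the paper's is a one-line computation, while yours trades it for the normal-form fact that the fibre area tends to $0$ at the maximum --- a fact of exactly the same nature as the vanishing-volume assertion the paper itself uses without proof, so this is not a gap. For $c_1^3$ you integrate the Duistermaat--Heckman volume, $3!\int_{-2}^{2}(4-t^2)\,dt=64$, where the paper uses ABBV localization (Theorem \ref{theorem_localization}); your route is more elementary here, though the localization template (Remark \ref{remark_localization_surface}) is what the paper recycles in all later cases with interior fixed points. You also make explicit the Morse--Bott computation $b_2(M)=1$, which the paper leaves implicit. Two small points to tighten: the phrase ``nonzero affine function vanishing at both $t=\pm2$'' should say the affine function vanishes at $\pm2$, hence is identically zero, contradicting $\langle[\omega_0],C\rangle>0$; and your claim $b_{\max}=2$ needs the remark (as in the paper) that $e(P_2^-)=e(P_{-2}^+)$ under the gradient-flow identification of $H^{-1}(-2+\epsilon)$ with $H^{-1}(2-\epsilon)$ before applying Lemma \ref{lemma_Euler_extremum} at the maximum, although $b_{\max}$ itself is not an entry of the table.
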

	
\begin{proof}
	We first assume that $M_0 \cong S^2 \times S^2$ (so that $b_{\min} = 2k$ for some $k \in \Z$ and $e(P_{-2}^+) = kx - y$ by Lemma \ref{lemma_Euler_extremum}.)
	Then, Corollary \ref{corollary_volume_extremum} implies that $b_{\min} = 2k \geq -1$, i.e., $k \geq 0$. Using the monotonicity of the reduced space (Proposition
	\ref{proposition_monotonicity_preserved_under_reduction}) and the Duistermaat-Heckman theorem \eqref{equation_DH}, we obtain
	\[
		[\omega_t] = 2x + 2y - t(kx - y) = (2 - kt)x + (2 + t)y, \quad \quad t \in (-2,2).
	\]
	As $\lim_{t \rightarrow 2} \int_{M_t} [\omega_t]^2 = 0$, we get $k=1$ and so $b_{\min} = 2$. Moreover there is a natural identification 
	$H^{-1}(-2 + \epsilon) \cong H^{-1}(2 - \epsilon)$ by a Morse flow of $H$ so that we obtain $e(P_2^-) = e(P_{-2}^+)$ and 
	\[
		\langle e(P_{-2}^+)^2, [M_{-2 + \epsilon}] \rangle  = \langle e(P_{-2}^+)^2, [M_{2 - \epsilon}] \rangle = -2.
	\]	
	Therefore $b_{\max} = 2$ by Lemma \ref{lemma_Euler_extremum}.
	
	Let $u$ be the positive generator of $H^2(Z_{\min};\Z) = H^2(Z_{\max};\Z)$ so that $u^2 = 0$.
	The first Chern number can be obtained by applying the 
	localization theorem \ref{theorem_localization} : 
	\[
		\begin{array}{ccl}\vs{0.1cm}
			\ds \int_M c_1^{S^1}(TM)^3 & = &  \ds  
							\int_{Z_{\min}} \frac{\left(c_1^{S^1}(TM)|_{Z_{\min}}\right)^3}{e_{Z_{\min}}^{S^1}} + 
							\int_{Z_{\max}} \frac{\left(c_1^{S^1}(TM)|_{Z_{\max}}\right)^3}{e_{Z_{\max}}^{S^1}} \\ \vs{0.1cm}
							& = &  \ds  
							\int_{Z_{\min}} \frac{\left( (2+b_{\min}) u + 2\lambda \right)^3}{b_{\min} u\lambda + \lambda^2} + 
							\int_{Z_{\max}} \frac{\left( (2+b_{\max}) u - 2\lambda \right)^3}{-b_{\max} u\lambda + \lambda^2} \\ \vs{0.1cm}
							& = &  \ds  
							\int_{Z_{\min}} (\lambda - 2u)(48u\lambda^2 + 8\lambda^3) + 
							\int_{Z_{\max}} (\lambda + 2u)(48u\lambda^2 - 8\lambda^3) = 32 + 32 = 64. 
		\end{array}
	\]
	See Table \ref{table_I_1}: {\bf (I-1).}
	
	It remains to consider the case where $M_0 \cong E_{S^2}$.
	In this case, we have $b_{\mathrm{min}} = 2k + 1$ for some $k \in \Z$ by Lemma \ref{lemma_Euler_extremum}. Similar to the previous case, we have 
	\[
		[\omega_t] = (3x + 2y) - t (kx - y) = (3 - kt)x + (2 + t)y, \quad t \in (-2, 2).
	\]
	Again, since $\lim_{t \rightarrow 2} \int_{M_t} [\omega_t]^2 = 8(3-2k) - 16 = 8 - 16k = 0$, we have $k = \frac{1}{2}$ which contradicts that $k \in \Z$. Consequently, 
	$M_0$ cannot be diffeomorphic to $E_{S^2}$. This completes the proof.
\end{proof}	
	
\begin{example}[Fano variety of type {\bf (I-1)}]\cite[17th in Section 12.2]{IP}\label{example_I_1} 
	Let $X = \p^3$ with the symplectic form $4 \omega_{\mathrm{FS}}$ (so that $c_1(TX) = [4\omega_{\mathrm{FS}}]$) where $\omega_{\mathrm{FS}}$ denotes 
	the normalized Fubini-Study form such that $\int_X \omega_{\mathrm{FS}} = 1$. Consider the Hamiltonian $S^1$-action on $(X, 4\omega_{\mathrm{FS}})$
	given by 
	\[
		t \cdot [z_0, z_1, z_2, z_3] = [tz_0, tz_1, z_2, z_3], \quad \quad t \in S^1
	\]
	where the balanced moment map for the action is given by
	\[
		H([z_0, z_1, z_2, z_3]) = \frac{4|z_0|^2 + 4|z_1|^2}{|z_0|^2 + |z_1|^2 + |z_2|^2 + |z_3|^2} - 2.
	\]
	Then the fixed point set (whose image is red lines in Figure \ref{figure_II_1}) is given by $\{ Z_{-2} \cong Z_2 \cong S^2 \}$ and this coincides with the one given in Theorem
	 \ref{theorem_I_1}.
	(See also \cite[Table 1-(4)]{Li2}.)
	
	\begin{figure}[H]
		\scalebox{1}{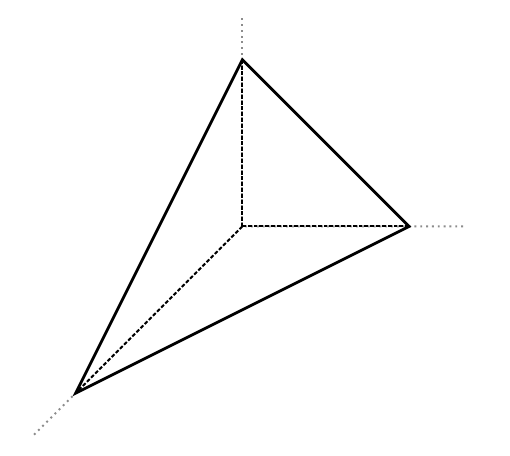}
		\caption{\label{figure_I_1} Toric moment map on $\p^3$}
	\end{figure}
\end{example}

\section{Case II : $\mathrm{Crit} ~\mathring{H} = \{0\}$}
\label{secCaseIIMathrmCritMathringH}

	In this section, we classify all TFD in the case where $\mathrm{Crit} \mathring{H} = \{ 0 \}$. 
	By Proposition \ref{proposition_GS}, we have $M_{-2 + \epsilon} \cong M_0 \cong M_{2 - \epsilon}$ so  that we may divide the proof into two cases:
\begin{itemize}
    \item $M_0 \cong S^2 \times S^2.$
    \item $M_0 \cong E_{S^2}.$
\end{itemize} 
We begin with the case $M_0 \cong S^2 \times S^2.$

\begin{theorem}\label{theorem_II_1}
	Let $(M,\omega)$ be a six-dimensional closed monotone semifree Hamiltonian $S^1$-manifold with $c_1(TM) = [\omega]$. Suppose that $\mathrm{Crit} H = \{ 2, 0, -2\}$ and
	$M_0 \cong S^2 \times S^2$. Then, up to orientation of $M$, the list of all possible topological fixed point data is given by
		\begin{table}[H]
			\begin{tabular}{|c|c|c|c|c|c|c|c|c|}
				\hline
				    & $(M_0, [\omega_0])$ & $e(P_{-2}^+)$ &$Z_{-2}$  & $Z_0$ & $Z_2$ & $b_2(M)$ & $c_1^3(M)$ \\ \hline \hline
				    {\bf (II-1.1)} & $(S^2 \times S^2, 2x + 2y)$ & $-y$  &$S^2$ & $Z_0 \cong S^2, ~\mathrm{PD}(Z_0) = x+y$ & $S^2$ & $2$ &$48$ \\ \hline
				    {\bf (II-1.2)} & $(S^2 \times S^2, 2x + 2y)$ & $-y$  &$S^2$ & $Z_0 \cong S^2, ~\mathrm{PD}(Z_0) = x$ & $S^2$ & $2$ & $56$\\ \hline    
				    {\bf (II-1.3)} & $(S^2 \times S^2, 2x + 2y)$ & $-y$  &$S^2$ &
				     \makecell{ $Z_0 = Z_0^1 ~\dot \cup ~ Z_0^2$ \\
				    $Z_0^1 \cong Z_0^2 \cong S^2$ \\ $\mathrm{PD}(Z_0^1) = \mathrm{PD}(Z_0^2) = y$}   & $S^2$ & $3$ & $48$\\ \hline
			\end{tabular}
			\vs{0.5cm}			
			\caption{\label{table_II_1} Topological fixed point data for $\mathrm{Crit} H = \{-2, -0, 2\}$, $M_0 \cong S^2 \times S^2$}
		\end{table}
			\vs{-0.7cm}
\end{theorem}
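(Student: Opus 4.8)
The plan is to parametrize the reduced symplectic classes via Duistermaat--Heckman, use the monotonicity at level zero to pin down the homology class of $Z_0$, and then extract numerical constraints from the requirement that the reduced form stay symplectic up to the maximum. Since $\mathrm{Crit}\,\mathring H = \{0\}$, the only interior critical value is $0$, and by Proposition \ref{proposition_GS} the reduced spaces $M_{-2+\epsilon}$, $M_0$, $M_{2-\epsilon}$ are all diffeomorphic; in this theorem we work under the standing hypothesis $M_0 \cong S^2 \times S^2$, so $e(P_{-2}^+) = kx - y$ with $b_{\min} = 2k$ by Lemma \ref{lemma_Euler_extremum}. First I would write $[\omega_t] = (2x+2y) - t\,e(P_{-2}^+)$ for $t \in (-2,0)$ using \eqref{equation_DH}, just as in the proof of Theorem \ref{theorem_I_1}. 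The fixed components $Z_0^i$ at level $0$ have index two (Lemma \ref{lemma_possible_critical_values}), and by Corollary \ref{corollary_Euler_class_zero_level} the Euler class jumps across the critical level by the Poincar\'e duals: $e(P_0^+) = e(P_0^-) + \sum_i \mathrm{PD}(Z_0^i)$. Combined with the symmetry $e(P_2^-) = -kx+y$ coming from the top end, this gives a linear relation expressing $\sum_i \mathrm{PD}(Z_0^i)$ in terms of $x$ and $y$, which is the central computation.

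\emph{Next} I would impose monotonicity of $(M_0,\omega_0)$ from Proposition \ref{proposition_monotonicity_preserved_under_reduction}, namely $[\omega_0] = c_1(TM_0)$. Writing $[\omega_0] = ax + by$ from the Duistermaat--Heckman interpolation and equating with $c_1(T(S^2\times S^2)) = 2x+2y$ adjusted by the blow-up/Euler-class jump at level $0$, I would solve for the integer coefficients. The condition that $[\omega_t]$ remains a genuine K\"ahler (hence positive) class on $S^2\times S^2$ for all $t \in (-2,2)$ forces $\langle [\omega_t]^2, [M_t]\rangle > 0$ on the regular intervals and $\to 0$ at the extrema; the vanishing at $t \to \pm 2$ gives the same kind of constraint that produced $k=1$ in Theorem \ref{theorem_I_1}. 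Each admissible solution for $(k, \mathrm{PD}(Z_0))$ is then a candidate TFD. The class $\mathrm{PD}(Z_0)$ must be representable by an embedded symplectic surface, so I would restrict to effective classes of nonnegative self-intersection (or to the exceptional classes of Lemma \ref{lemma_list_exceptional} if a blow-down picture intervenes), which cuts the list down to the three rows $x+y$, $x$, and $\{y,y\}$ appearing in Table \ref{table_II_1}.

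\emph{Finally} I would compute the invariants $b_2(M)$ and $c_1^3(M)$ for each surviving candidate to confirm they are distinct and match the table. The Betti number $b_2(M)$ is read off from the Morse--Bott structure of $H$: each index-two fixed surface at level $0$ contributes to $H^2(M)$, so $b_2(M) = 2 + |Z_0|$ roughly (giving $2$ for a single $Z_0$ and $3$ for the two-component case). The Chern number $c_1^3(M)$ I would obtain by the ABBV localization Theorem \ref{theorem_localization} summing over $Z_{\min}$, $Z_0$, and $Z_{\max}$, exactly as in the displayed computation in Theorem \ref{theorem_I_1} but now with an extra interior contribution from $Z_0$ weighted by its equivariant Euler class (weights $\pm\lambda$ transverse to the level $0$). \textbf{The hard part} will be showing that these numerical/homological constraints are not merely necessary but \emph{exhaustive} --- that no other effective class $\mathrm{PD}(Z_0)$ survives all the monotonicity and positivity inequalities --- and in particular ruling out spurious candidates where $Z_0$ has the ``wrong'' self-intersection or where multiple components could coexist; this requires carefully intersecting the Duistermaat--Heckman positivity cone with the integrality and adjunction constraints rather than any single clever identity.
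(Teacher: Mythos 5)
Your overall skeleton (Duistermaat--Heckman interpolation of $[\omega_t]$, monotonicity $[\omega_0]=2x+2y$ at level zero, positivity and vanishing of $\int_{M_t}[\omega_t]^2$, localization for $c_1^3$) matches the paper's, but your ``central computation'' is flawed as stated. You propose to combine $e(P_0^+)=e(P_0^-)+\sum_i\mathrm{PD}(Z_0^i)$ with ``the symmetry $e(P_2^-)=-kx+y$'' to solve for $\sum_i\mathrm{PD}(Z_0^i)$. Two problems. First, the integer in the expression for $e(P_2^-)$ in Lemma \ref{lemma_Euler_extremum} is tied to $b_{\max}$, not to $b_{\min}=2k$, and these differ in general: row \textbf{(II-1.2)} of Table \ref{table_II_1} has $b_{\min}=0$, $b_{\max}=2$, so reusing $k$ at both ends silently discards a genuine case. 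Second, and more seriously, the classes $x,y$ appearing in $e(P_2^-)=-k'x+y$ are by definition dual to the fiber and base of the bundle $M_{2-\epsilon}\rightarrow Z_{\max}$, and after identifying $M_{2-\epsilon}$ with $M_0$ along the gradient flow there is no reason these coincide with the $x,y$ defined from the bundle over $Z_{\min}$. Indeed they can be swapped: already in case \textbf{(I-1)} ($M=\p^3$, Theorem \ref{theorem_I_1}) one has $e(P_2^-)=e(P_{-2}^+)=x-y$ in the bottom basis, which is not of the form $-k'x+y$; and among the cases of the present theorem the top fiber class equals $y$ in \textbf{(II-1.1)}, \textbf{(II-1.2)} but equals $x$ in \textbf{(II-1.3)}. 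So there is no single linear relation of the kind you assert. The paper avoids this entirely by using only basis-independent scalar constraints: $\lim_{t\to2}\int_{M_t}[\omega_t]^2=0$, $\int_{M_t}[\omega_t]^2>0$ for $t<2$, $\langle[\omega_0],[Z_0]\rangle>0$, $\mathrm{Vol}(Z_{-2})=2+2k>0$, and $\langle e(P_2^-)^2,[M_{2-\epsilon}]\rangle=-b_{\max}$.

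Even granting the numerics, what you defer as ``the hard part'' is precisely where the paper does its real work, and it cannot be skipped. The scalar constraints leave six integer solutions $(a,b,k)$ for $\mathrm{PD}(Z_0)=ax+by$: $(1,1,0),(1,0,0),(0,1,1),(0,2,0),(-1,2,0),(-1,2,1)$. The last two are eliminated only by the adjunction formula \eqref{equation_adjunction}: $\mathrm{PD}(Z_0)=-x+2y$ gives $[Z_0]\cdot[Z_0]=-4$ and total area $2$, forcing at least three sphere components while the area permits at most two. The solution $(0,1,1)$ is not spurious at all; it is removed only by the normalization $b_{\min}\le b_{\max}$ of \eqref{equation_assumption} (this is what ``up to orientation'' means in the statement), which you never invoke --- your filter ``effective classes of nonnegative self-intersection'' does not exclude it. Finally, the TFD records the topology of $Z_0$, not merely its class: for $(1,1,0)$ adjunction allows both $Z_0\cong S^2$ and $Z_0\cong S^2\sqcup T^2$, and one must rule out the latter by checking (as the paper does) that no pair of disjoint classes with the required squares $0$ and $2$ sums to $x+y$; for $(0,2,0)$ one must show $Z_0$ is exactly two spheres, each dual to $y$. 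Without these arguments your list is neither proven exhaustive nor are the $Z_0$ entries justified. Two minor corrections: the interior localization term does not just ``appear'' --- it vanishes, since the weight sum at $Z_0$ is zero and $(c_1(TM)|_{Z_0})^3=0$ on a surface; and the Morse--Bott count is $b_2(M)=1+|Z_0|$, not $2+|Z_0|$ (your final values are nevertheless correct).
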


\begin{proof}
	Denote by $\mathrm{PD}(Z_0) = ax + by \in H^2(M_0; \Z)$ for some $a,b \in \Z$.
	By Lemma \ref{lemma_Euler_extremum}, we may assume that $b_{\mathrm{min}} = 2k$ for some integer $k \in \Z$ and that $e(P_{-2}^+) = kx - y$.
	By the Duistermaat-Heckman theorem \eqref{equation_DH}, we have
	\[
		[\omega_2] = [\omega_0] - 2(kx - y + \mathrm{PD}(Z_0)) = 2(1-a-k)x + (4-2b)y. 
	\]	
	As $\lim_{t \rightarrow 2} \int_{M_t} [\omega_t]^2 = 0$, we see that
	\begin{enumerate}
		\item $1-a-k=0$ and $4-2b > 0$, or
		\item $b=2$ and $1-a-k > 0$
	\end{enumerate}
	where the above two strict inequalities follow from the fact that $\int_{M_t} [\omega_t]^2 > 0$ for every $0 \leq t < 2$. Moreover, we have 
	\begin{equation}\label{equation_area_vol}
		\langle c_1(TM_0), [Z_0] \rangle = \langle [\omega_0], [Z_0] \rangle = 2a + 2b > 0, \quad \quad 
		\mathrm{Vol}(Z_{-2}) = 2k + 2 > 0 \hs{0.2cm} (\Leftrightarrow ~k \geq 0) 
	\end{equation}	
	by Corollary \ref{corollary_volume_extremum}. \\
	
	\noindent 
	{\bf Case (1).} ~If $a + k = 1$ and $b \leq 1$, then the integer solutions $(a,b,k)$ for \eqref{equation_area_vol} are
	\[
		(1,1,0), (1,0,0), (0,1,1).
	\]
	
	\noindent
	{\bf Case (2).} ~If $a + k \leq 0$ and $b = 2$, then the integer solutions for $(a,b,k)$ are 
	\[
		(0,2,0), (-1,2,0), (-1,2,1).
	\]
	However, we may rule out the last two solutions in {\bf Case (2)} using the adjuction formula 
	\begin{equation}\label{equation_adjunction}
		[Z_0]\cdot [Z_0]  + \sum (2 - 2g_i) = \langle c_1(TM_0), [Z_0] \rangle
	\end{equation}
	where the sum is taken over all fixed components of $Z_0$. If $(a,b,k)$ is $(-1,2,0)$ or $(-1,2,1)$, we have 
	\[
		-4 + \sum (2 - 2g_i) = \langle 2x + 2y, [Z_0] \rangle = 2, \quad \quad \mathrm{PD}(Z_0) = -x + 2y
	\]
	which implies that there are at least three components each of which is homeomorphic to a sphere. Meanwhile, since 
	$\langle c_1(TM_0), [Z_0] \rangle = 2$ is the symplectic area of $Z_0$, there should be at most two components in $Z_0$ and this leads to a contradiction.
	Summing up, we have 
	\begin{equation}\label{equation_2_1_bminbmax}
		\begin{array}{lll}
			(a,b,k) = (1,1,0) ~(b_{\min} = 0, b_{\max} = 0), & & (a,b,k) = (1,0,0) ~(b_{\min} = 0, b_{\max} = 2) \\
			(a,b,k) = (0,1,1) ~(b_{\min} = 2, b_{\max} = 0), & & (a,b,k) = (0,2,0) ~(b_{\min} = 0, b_{\max} = 0) \\
		\end{array}
	\end{equation}
	where $b_{\min} = 2k$ and $b_{\max}$ is computed by Lemma \ref{lemma_Euler_extremum}.
	Since we only need to classify TFD's satisfying $b_{\min} \leq b_{\max}$ by \eqref{equation_assumption}, the case $(a,b,k) = (0,1,1)$ can be ruled out.
	
	Notice that the symplectic area of each component of $Z_0$ is even (since $[\omega_0] = 2x + 2y$). Applying \eqref{equation_adjunction} to each 
	solutions in \eqref{equation_2_1_bminbmax},
	we deduce that
	\begin{equation}\label{equation_II_1}
		\begin{array}{lllll}
			\text{\bf (II-1.1)} : (a,b,k) = (1,1,0) & \Rightarrow & 2 + \sum (2-2g_i) = 4 & \Rightarrow & \text{$Z_0$ has at most two components,}\\
			\text{\bf (II-1.2)} : (a,b,k) = (1,0,0) & \Rightarrow & 0 + \sum (2-2g_i) = 2 & \Rightarrow & Z_0 \cong S^2,\\
			\text{\bf (II-1.3)} : (a,b,k) = (0,2,0) & \Rightarrow & 0 + \sum (2-2g_i) = 4 & \Rightarrow & \text{$Z_0$ has exactly two components.}\\
		\end{array}
	\end{equation}
	For the last case, it is easy to check that each two components are spheres (with area $2$) whose Poincar\'{e} dual classes are both $y$. 
	
	For the first case, if $Z_0$ consists of two components, say $Z_0^1$ and $Z_0^2$, then we can easily see that $Z_0^1 \cong S^2$ and $Z_0^2 \cong T^2$
	with 
	\[
		[Z_0^1] \cdot [Z_0^1] = 0, \quad [Z_0^2] \cdot [Z_0^2] = 2, \quad 
		[Z_0^1] \cdot [Z_0^2] = 0, \quad \mathrm{PD}(Z_0^1) + \mathrm{PD}(Z_0^2) = x+y.  
	\]
	The first and the third equalities imply that $(\mathrm{PD}(Z_0^1), \mathrm{PD}(Z_0^2)) = (ax, bx)$ 
	or $(ay, by)$ for some $a,b \in \Z$, but in either case, the second (as well as fourth) equality does not hold.
	Therefore, $Z_0$ is connected and homeomorphic to $S^2$.
	
	To calculate the Chern number for each fixed point data, we apply the localization theorem \ref{theorem_localization} : 
	\[
		\begin{array}{ccl}\vs{0.3cm}
			\ds \int_M c_1^{S^1}(TM)^3 & = &  \ds  
							\int_{Z_{\min}} \frac{\left(c_1^{S^1}(TM)|_{Z_{\min}}\right)^3}{e_{Z_{\min}}^{S^1}} + 
							\int_{Z_{\max}} \frac{\left(c_1^{S^1}(TM)|_{Z_{\max}}\right)^3}{e_{Z_{\max}}^{S^1}} + 
							\int_{Z_0} \frac{\overbrace{\left(c_1^{S^1}(TM)|_{Z_0}\right)^3}^{= 0}}{e_{Z_0}^{S^1}} \\ \vs{0.2cm}
							& = &  \ds  
							\int_{Z_{\min}} \frac{\left( (2+b_{\min}) u + 2\lambda \right)^3}{b_{\min} u\lambda + \lambda^2} + 
							\int_{Z_{\max}} \frac{\left( (2+b_{\max}) u - 2\lambda \right)^3}{-b_{\max} u\lambda + \lambda^2} \\ \vs{0.1cm}
							& = &  \ds  
							\int_{Z_{\min}} (\lambda - b_{\min}u)(12(2+b_{\min}) u\lambda^2 + 8\lambda^3) + 
							\int_{Z_{\max}} (\lambda + b_{\max}u)(12(2+ b_{\max}) u\lambda^2 - 8\lambda^3) \\ \vs{0.1cm}
							& = &\ds 24 + 4b_{\min} + 24 + 4b_{\max}.
		\end{array}
	\]
	By \eqref{equation_2_1_bminbmax}, this completes the proof. See Table \ref{table_II_1} and compare it with \eqref{equation_II_1}.
\end{proof}

\begin{remark}\label{remark_localization_surface}
	We use the following equations frequently for calculating the Chern numbers : 
	\[
		\left(c_1^{S^1}(TM)|_{Z_0} \right)^3 = 0, \quad \int_{Z_{\min}} \frac{\left(c_1^{S^1}(TM)|_{Z_{\min}}\right)^3}{e_{Z_{\min}}^{S^1}} = 24 + 4b_{\min}, \quad 
		\int_{Z_{\max}} \frac{\left(c_1^{S^1}(TM)|_{Z_{\max}}\right)^3}{e_{Z_{\max}}^{S^1}} = 24 + 4b_{\max}.
	\]
\end{remark}
\vs{0.3cm}

\begin{example}[Fano varieties of type {\bf (II-1)}]\label{example_II_1} 
	We denote by $T^k$ a $k$-dimensional compact torus, $\frak{t}$ the Lie algebra of $T$, and $\frak{t}^*$ the dual of $\frak{t}$. 
	We provide algebraic Fano examples for each topological fixed point data given in Theorem \ref{theorem_II_1} as follows. \vs{0.3cm}

	\begin{figure}[H]
		\scalebox{1}{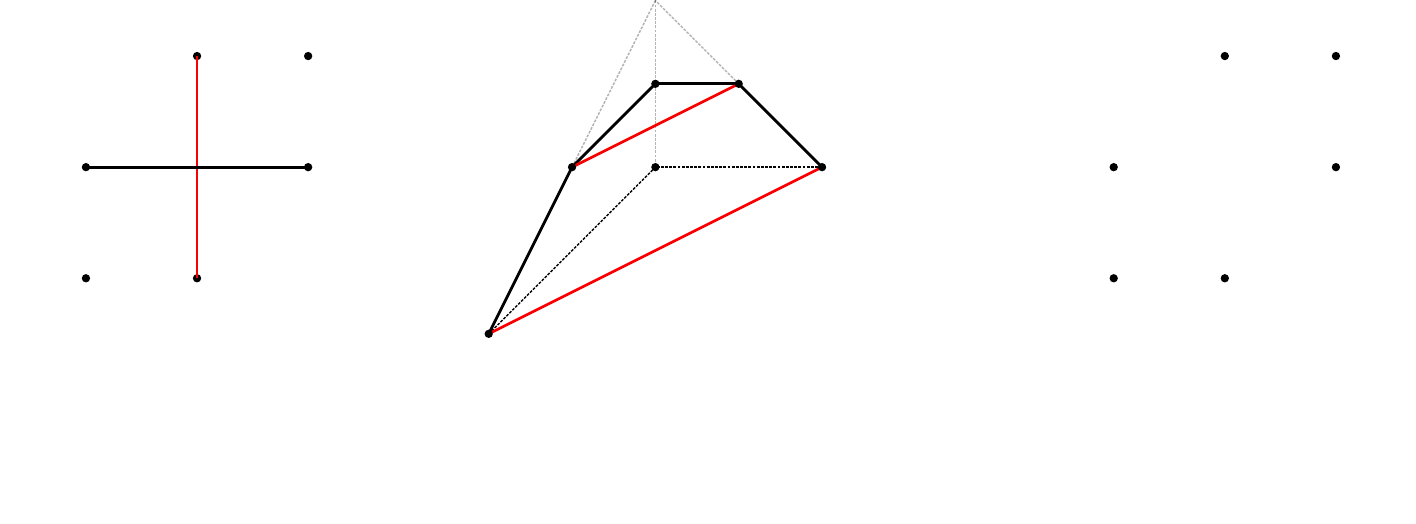}
		\caption{\label{figure_II_1} Fano varieties of type {\bf (II-1)}}
	\end{figure}          	
	
	\begin{enumerate}
		\item {\bf Case (II-1.1)} \cite[32nd in Section 12.3]{IP} : Let $W= \mcal{F}(3)$ be the complete flag variety of $\C^3$, or equivalently, a smooth divisor of bidegree $(1,1)$ in $\p^2 \times \p^2$
		(via the Pl\"{u}cker embedding).
		One can think of $M$ as a co-adjoint orbit 
		of $U(3)$. It is well-known that $M$ admits a unique $U(3)$-invariant monotone K\"{a}hler form $\omega$ (called a {\em Kirillov-Kostant-Souriau form}) such that 
		$c_1(TW) = [\omega]$. A maximal torus $T^2$ of $U(3)$ acts on $(W,\omega)$ in a Hamiltonian fashion with a moment map 
		\[
			\mu : W \rightarrow \frak{t}^*
		\]
		such that the moment map image can be described by Figure \ref{figure_II_1} (a), where edges corresponds to $T$-invariant spheres (called 1-skeleton in \cite{GKM}).   
		If we take a circle subgroup $S^1$ generated by $\xi = (1,0) \in \frak{t} \cong \R^2$, then the action is semifree and the balanced moment map is given by 
		\[
			\mu_\xi = \langle \mu, \xi \rangle - 2
		\] 
		The fixed point set for the $S^1$-action consists of three spheres 
		corresponding to the edges (colored by red in Figure \ref{figure_II_1} (a))
		\[
			e_1 = \overline{(0,0) ~(0,2)}, \quad e_2 = \overline{(2,0) ~(2,4)}, \quad e_3 = \overline{(4,2) ~(4,4)}
		\]
		The symplectic areas of the minimum $Z_{-2} = \mu^{-1}(e_1)$ and the maximum
		$Z_2 = \mu^{-1}(e_3)$ are both equal to $2 = 2 + b_{\min} = 2 + b_{\max}$ by Corollary \ref{corollary_volume_extremum} and hence 
		$b_{\min} = b_{\max} = 0$. Thus $W_{-2 + \epsilon} \cong S^2 \times S^2$ by Lemma \ref{lemma_Euler_extremum}. Therefore, the corresponding fixed point data
		should coincide with {\bf (II-1.1)} in Table \ref{table_II_1}.
		\vs{0.2cm}
		
		\item {\bf Case (II-1.2)} \cite[35th in Section 12.3]{IP} : Let $M = V_7$, the toric blow-up of $\p^3$ at a point. Then the moment polytope is given by 
		Figure \ref{figure_II_1} (b) where we denote the moment map by $\mu$. 
		If we take a circle subgroup generated by $\xi = (1,1,0) \in \frak{t}$, then 
		we can easily check that the $S^1$-action is semifree and the balanced moment map is given by 
		$\mu_\xi := \langle \mu, \xi \rangle - 2$.
		Moreover, the fixed components $Z_{-2}$, $Z_0$, and $Z_2$ are three spheres whose moment map images are the edges 
		(colored by red in Figure \ref{figure_II_1} (b))
		\[
			e_1 = \overline{(0,0,0) ~(0,0,2)}, \quad e_2 = \overline{(0,2,2) ~(2,0,2)}, \quad e_3 = \overline{(0,4,0) ~(4,0,0)}. 
		\]
		In this case, we have $Z_{-2} = \mu^{-1}(e_1)$ and $Z_2 = \mu^{-1}(e_3)$ with the symplectic areas 2 and 4, respectively.
		By Corollary \ref{corollary_volume_extremum}, we have $b_{\min} = 0$ and $b_{\max} = 2$ and so $M_{-2 + \epsilon} \cong S^2 \times S^2$
		by Lemma \ref{lemma_Euler_extremum}. Also, one can easily check that the fixed point data for the $S^1$-action equals {\bf (II-1.2)} in Table \ref{table_II_1}
		(see also \eqref{equation_2_1_bminbmax}).
		
		\vs{0.2cm}
		\item {\bf Case (II-1.3)} \cite[27th in Section 12.4]{IP} : Let $M = \p^1 \times \p^1 \times \p^1$ with the monotone K\"{a}hler form $\omega = 2\omega_{\mathrm{FS}} 
		\oplus 2\omega_{\mathrm{FS}} \oplus 2\omega_{\mathrm{FS}}$ so that $c_1(TM) = [\omega]$. Then the standard =Hamiltonian $T^3$-action admits a 
		moment map whose image is a  cube with side length 2, see Figure \ref{figure_II_1} (c). Take a circle subgroup $S^1$ of $T^3$ generated by $\xi = (1,0,1)$. 
		Then the induced $S^1$-action becomes semifree with the balanced moment map is given by $\mu_\xi = \langle \mu, \xi \rangle - 2$. It is easy to see that 
		there are four fixed components homeomorphic to spheres and their moment map images are 
		\[
			e_1 = \overline{(0,2,0) ~(0,0,0)}, \quad e_2 = \overline{(0,2,2) ~(0,0,2)}, \quad e_3 = \overline{(2,2,0) ~(2,0,0)}, \quad e_4 = \overline{(2,2,2) ~(2,0,2)}  
		\]
		colored by red in Figure \ref{figure_II_1} (c).
		Since $Z_{-2} = \mu^{-1}(e_1)$ and $Z_2 = \mu^{-1}(e_4)$ both have the symplectic area 2, we have $b_{\min} = b_{\max} = 0$ and this fixed point data coincides with
		{\bf (II-1.3)} in Table \ref{table_II_1}. 
	\end{enumerate} 
\end{example}

Now we consider the case of $M_0 \cong E_{S^2}$. 

\begin{theorem}\label{theorem_II_2}
	Let $(M,\omega)$ be a six-dimensional closed monotone semifree Hamiltonian $S^1$-manifold with $c_1(TM) = [\omega]$. Suppose that $\mathrm{Crit} H = \{ 2, 0, -2\}$ 
	and $M_0 \cong E_{S^2}$. Then, up to orientation of $M$, the list of all possible topological fixed point data is given by
		\begin{table}[H]
			\begin{tabular}{|c|c|c|c|c|c|c|c|c|}
				\hline
				    & $(M_0, [\omega_0])$ & $e(P_{-2}^+)$ &$Z_{-2}$  & $Z_0$ & $Z_2$ & $b_2(M)$ & $c_1^3(M)$ \\ \hline \hline
				    {\bf (II-2.1)} & $(E_{S^2}, 3x + 2y)$ & $-x -y$  &$S^2$ & 
				    	\makecell{ $Z_0 = Z_0^1 ~\dot \cup ~ Z_0^2$ \\
					    $Z_0^1 \cong Z_0^2 \cong S^2$ \\ $\mathrm{PD}(Z_0^1) = y$, $\mathrm{PD}(Z_0^2) = x+y$}  & $S^2$ & $3$ & $48$\\ \hline    
				    {\bf (II-2.2)} & $(E_{S^2}, 3x + 2y)$ & $-x-y$  &$S^2$ & $Z_0 \cong S^2, ~\mathrm{PD}(Z_0) = 2x+2y$ & $S^2$ & $2$ &$40$ \\ \hline
			\end{tabular}
			\vs{0.5cm}			
			\caption{\label{table_II_2} Topological fixed point data for $\mathrm{Crit} H = \{-2, -0, 2\}$, $M_0 \cong E_{S^2}$}
		\end{table}
			\vs{-0.7cm}
\end{theorem}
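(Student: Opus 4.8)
The plan is to repeat the strategy of the proof of Theorem \ref{theorem_II_1}, the only changes being that the reduced surface $M_0 \cong E_{S^2}$ now carries the intersection form $\langle x^2\rangle = 0$, $\langle y^2\rangle = -1$, $\langle xy\rangle = 1$ and that $[\omega_0] = c_1(TM_0) = 3x+2y$ by Proposition \ref{proposition_monotonicity_preserved_under_reduction}. First I would set $\mathrm{PD}(Z_0) = ax+by$ with $a,b \in \Z$ and, since $M_0 \cong E_{S^2}$, use Lemma \ref{lemma_Euler_extremum} to write $b_{\min} = 2k+1$ (which is now \emph{odd}) and $e(P_{-2}^+) = kx - y$. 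Crossing the codimension-four fixed locus $Z_0$ at level $0$, Corollary \ref{corollary_Euler_class_zero_level} gives $e(P_0^+) = e(P_{-2}^+) + \mathrm{PD}(Z_0) = (k+a)x + (b-1)y$, whence the Duistermaat--Heckman formula \eqref{equation_DH} produces
\[
    [\omega_t] = \bigl(3 - t(k+a)\bigr)\,x + \bigl(2 - t(b-1)\bigr)\,y, \qquad t \in (0,2).
\]

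Next I would impose the collapse of $M_t$ onto $Z_2 \cong S^2$ as $t \to 2$, so that $\int_{M_t}[\omega_t]^2 \to 0$. Evaluating this with the Hirzebruch intersection form gives
\[
    \lim_{t\to 2}\int_{M_t}[\omega_t]^2 = (4-2b)\bigl(2+2b-4(k+a)\bigr) = 0,
\]
which forks into Case A ($b=2$) and Case B ($b = 2(k+a)-1$). Alongside this I would carry the further constraints: positivity of $\int_{M_t}[\omega_t]^2$ on $[0,2)$, which forces $t=2$ to be the \emph{first} zero and hence $k+a \le 1$; the volume bounds of Corollary \ref{corollary_volume_extremum}, namely $\mathrm{Vol}(Z_{-2}) = 2k+3 > 0$ (so $k \ge -1$) and $\langle [\omega_0], [Z_0]\rangle = 2a+b > 0$; and the relation $b_{\max} = -\langle e(P_2^-)^2, [M_{2-\epsilon}]\rangle = -(b-1)\bigl(2(k+a)-(b-1)\bigr)$ obtained from $e(P_2^-) = e(P_{-2}^+)+\mathrm{PD}(Z_0)$ via Lemma \ref{lemma_Euler_extremum}. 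Finally, by Remark \ref{remark_bminbmax} and \eqref{equation_assumption} I may assume $b_{\min} \le b_{\max}$.

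The decisive input is the hypothesis $M_{2-\epsilon} \cong M_0 \cong E_{S^2}$, which forces $b_{\max}$ to be \emph{odd}. In Case B we have $b-1 = 2(k+a-1)$ even, so $b_{\max} = -(b-1)\cdot(\cdots)$ is even, and Case B is therefore empty. In Case A we have $b-1 = 1$, so $b_{\max} = 1-2(k+a)$ is automatically odd, and the inequalities $k \ge -1$, $a \ge 0$, $k+a \le 1$, together with $b_{\min} \le b_{\max}$ (i.e. $2k+a \le 0$), leave the finite list $(a,b,k) \in \{(0,2,-1),(0,2,0),(1,2,-1),(2,2,-1)\}$. I would then discard the two solutions with $a=0$ by the adjunction formula \eqref{equation_adjunction}: there $\mathrm{PD}(Z_0)=2y$ has self-intersection $-4$ and symplectic area $2$, forcing at least three disjoint spheres, which is impossible since the area admits at most two components.

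For the survivors I would apply \eqref{equation_adjunction} component by component. The triple $(2,2,-1)$ yields a single sphere with $\mathrm{PD}(Z_0)=2x+2y$, giving {\bf (II-2.2)}; the triple $(1,2,-1)$, with $[Z_0]^2 = 0$ and $\langle c_1(TM_0),[Z_0]\rangle = 4$, forces $Z_0$ to split into two disjoint spheres $Z_0^1, Z_0^2$ of genus zero, which a short intersection computation pins down to the classes $\mathrm{PD}(Z_0^1)=y$ and $\mathrm{PD}(Z_0^2)=x+y$, giving {\bf (II-2.1)}. The remaining invariants follow routinely: $b_2(M)$ from the perfect Morse--Bott function $H$ (summing $b_{2-\mathrm{ind}(Z)}(Z)$ over the fixed components) and, as in Remark \ref{remark_localization_surface}, $c_1^3(M) = 48 + 4(b_{\min}+b_{\max})$ from Theorem \ref{theorem_localization} since $\bigl(c_1^{S^1}(TM)|_{Z_0}\bigr)^3 = 0$; this returns $48$ and $40$ respectively. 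The main obstacle I anticipate is precisely the type/parity bookkeeping at the maximum: it is the oddness of $b_{\max}$ --- equivalently, that the top reduced space remains a Hirzebruch surface rather than degenerating to $S^2\times S^2$ --- that annihilates all of Case B and half of Case A, and threading this parity condition correctly through the intersection form and the two adjunction arguments is the delicate step.
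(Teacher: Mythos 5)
Your proposal is correct and follows essentially the same route as the paper: the Duistermaat--Heckman computation, the parity argument that $b_{\max}$ must be odd (since $M_{2-\epsilon}\cong E_{S^2}$) killing the branch $1+b=2(k+a)$, the volume/positivity constraints together with $b_{\min}\le b_{\max}$, and the adjunction formula to eliminate $a=0$ and settle the two survivors $(k,a,b)=(-1,1,2)$ and $(-1,2,2)$. The only cosmetic differences are that you derive $k+a\le 1$ from positivity of $\int_{M_t}[\omega_t]^2$ on $[0,2)$ where the paper uses $\mathrm{vol}(Z_{\max})=b_{\max}+2\ge 1$, and you impose $b_{\min}\le b_{\max}$ before rather than after the adjunction step.
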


\begin{proof}
	The idea of the proof is essentially similar to the proof of Theorem \ref{theorem_II_1}.
	
	In this case, Lemma \ref{lemma_Euler_extremum} implies that $b_{\mathrm{min}} = 2k+1$ and $e(P_{-2}^+) = kx - y$ for some integer $k \in \Z$.
	If we denote by $\mathrm{PD}(Z_0) = ax + by \in H^2(M_0; \Z)$ for some $a,b \in \Z$, then it follows that 
          \[
	          \langle c_1(TM_0), [Z_0] \rangle > 0, \quad  \mathrm{Vol}(Z_{-2}) = 2k + 3 > 0 \quad \quad \Rightarrow \quad \quad 2a+b \geq 1, \quad k \geq -1.
          \]  by Corollary \ref{corollary_volume_extremum}.
	Also, by the Duistermaat-Heckman theorem \eqref{equation_DH}, we obtain
	\[
		[\omega_2] = [\omega_0] - 2(kx - y + \mathrm{PD}(Z_0)) = (3-2a-2k)x + (4-2b)y.
	\]
          Since $\lim_{t \rightarrow 2} \int_{M_t} [\omega_t]^2 = 0$, we have 
          \[
          		2(3-2a-2k)(4-2b) - (4-2b)^2 = 0 \quad \Rightarrow \quad b=2 \hs{0.3cm} \text{or} \hs{0.3cm} 1+b = 2a + 2k 
          \]
          Note that in  the latter case, $b$ becomes odd and this implies that 
          \begin{equation}\label{equation_2_2_bmax}
	          	\langle e(P_2^-)^2, [M_0] \rangle = \langle ((a+k)x + (b-1)y)^2, [M_0] \rangle = 2(a+k)(b-1) - (b-1)^2  \equiv 0 ~\mod 2
	\end{equation}
	which contradicts that $- b_{\max} = \langle e(P_2^-)^2, [M_0] \rangle$ is odd by Lemma \ref{lemma_Euler_extremum}
	(since $M_{2-\epsilon} \cong M_0 \cong E_{S^2}$). 
	Consequently, we get 
	\begin{equation}\label{equation_2_2}
		b=2, \quad a \geq 0, \quad k \geq -1, \quad a + k \leq 1 ~( \Leftrightarrow ~b_{\max} + 2 = \mathrm{vol}(Z_{\max}) \geq 1). 
	\end{equation}
          Therefore, all possible solutions $(k,a,b)$ to \eqref{equation_2_2} are given by 
          \[
          		(-1,0,2), (-1,1,2), (-1, 2, 2), (0,0,2), (0,1,2),(1,0,2).
          \]          
          Applying the adjunction formula, we may rule out some solutions : if $a=0$, then $\mathrm{PD}(Z_0) = 2y$ so that 
          we have $[Z_0] \cdot [Z_0] = -4$ and $\langle c_1(TM_0), [Z_0] \rangle = 2$ and hence there are at most two connected component in $Z_0$. 
          On the other hand, the adjunction formula \eqref{equation_adjunction} implies that 
          \[
		\underbrace{[Z_0] \cdot [Z_0]}_{= ~-4} + \sum (2 - 2g_i) = \langle c_1(TM_0), [Z_0] \rangle = 2
          \]
	and therefore there should be at least three spheres, which contradicts that $Z_0$ consists of at most two connected components.
	Also, if $(k,a,b)=(0,1,2)$, then the formula \eqref{equation_2_2_bmax} induces that 
	 $b_{\min} = 1$ and $b_{\max} = -1$ (in particular $b_{\min} > b_{\max}$) and hence we may rule out this case by \eqref{equation_assumption}.
	To sum up, we have only two possible cases  : \vs{0.2cm}
	
	\noindent
	{\bf (II-2.1)} : $(k,a,b)=(-1,1,2)$. In this case, $[Z_0] \cdot [Z_0] = 0$ and $\langle c_1(TM_0), [Z_0] \rangle = 4$, $b_{\min} = -1$ and $b_{\max} = 1$. The adjunction formula 
	implies that there are at least two spheres denoted by $C_1$ and $C_2$ where the followings are satisfied : 
		\begin{itemize}
			\item $1 \leq \langle [\omega_0], [C_i] \rangle \leq 3$. 
			\item $2 \leq \langle [\omega_0], [C_1] + [C_2]  \rangle \leq 4$.
			\item $[C_1] \cdot [C_2] =0$.
		\end{itemize}
	Let $\mathrm{PD}(C_1) = p x + q y$. If $\langle [\omega_0], [C_1] \rangle = 2p + q =  1$, then $2pq - q^2 = -1$ by the adjunction formula so that 
	we have $(p,q) = (0,1)$. Similarly, if $\langle [\omega_0], [C_1] \rangle = 2p + q =  2$, then we have $2pq - q^2 = 0$ and hence 
	\[
		q = 0 ~(p = 1) \quad \text{or} \quad q = 2p ~(4p = 2). 
	\]
	So, we have $(p,q) = (1,0)$. 
	
	Note that if $\langle [\omega_0], [C_i] \rangle \leq 2$ for every $i = 1,2$, since $[C_1] \cdot [C_2] = 0$, the only possible case is 
	$\langle [\omega_0], [C_i] \rangle = 2$ for every $i=1,2.$ However, this cannot be happened since $\mathrm{PD}(C_1) + \mathrm{PD}(C_2) \neq x + 2y$.
	Thus the only possibility is that $\langle [\omega_0], [C_1] \rangle = 1$ and $\langle [\omega_0], [C_2] \rangle = 3$.
	Therefore, we obtain $\mathrm{PD}(C_1) = y$, $\mathrm{PD}(C_2) = x+y$, and $C_1 \cong C_2 \cong S^2$. See Table \ref{table_II_2}: {\bf (II-2.1)}.
	\vs{0.3cm}
			
	\noindent
	{\bf (II-2.2)} : $(k,a,b)=(-1,2,2)$. In this case, we have $[Z_0] \cdot [Z_0] = 4$ and $\langle c_1(TM_0), [Z_0] \rangle = 6$, $b_{\min} = -1$ and $b_{\max} = -1$. 
	By the adjunction formula, there exists a component $C \cong S^2$ of $Z_0$ where we denote by $\mathrm{PD}(C) = px + qy$.  Then, we have 
	\[
		[C] \cdot ([Z_0] - [C]) = \langle (px + qy) \cdot ((2-p)x + (2-q)y), [M_0] \rangle = 0 \quad \Leftrightarrow \quad -2pq + 2p + q^2 = 0.
	\]
	Also, since 
	\[
		V := \mathrm{vol}(C) = [C] \cdot [C] + 2 = \langle (px+qy)^2, [M_0] \rangle + 2 = 2pq - q^2 + 2, 
	\] we get $2p + 2 - V = 0$. If $V = 6$, then $Z_0$ is connected so that we are done. 
	If $V=2$, then $p = q= 0$ which is impossible. Finally if $V=4$, then $p = 1$ and $q^2 - 2q + 2 = 0$ whose solution cannot be real. Therefore, we have $V=6$
	and $Z_0$ is connected and homemorphic to $S^2$. See Table \ref{table_II_2}: {\bf (II-2.2)}.
	\vs{0.3cm}
	
	Note that the Chern number computations in Table \ref{table_II_2} immediately follow from Remark \ref{remark_localization_surface}.

\end{proof}

\begin{example}[Fano varieties of type {\bf (II-2)}]\label{example_II_2} 

	We illustrate algebraic Fano varieties with holomorphic Hamiltonian torus actions having each topological fixed point data given in Theorem \ref{theorem_II_2}.\vs{0.3cm}
	
	\begin{figure}[h]
		\scalebox{1}{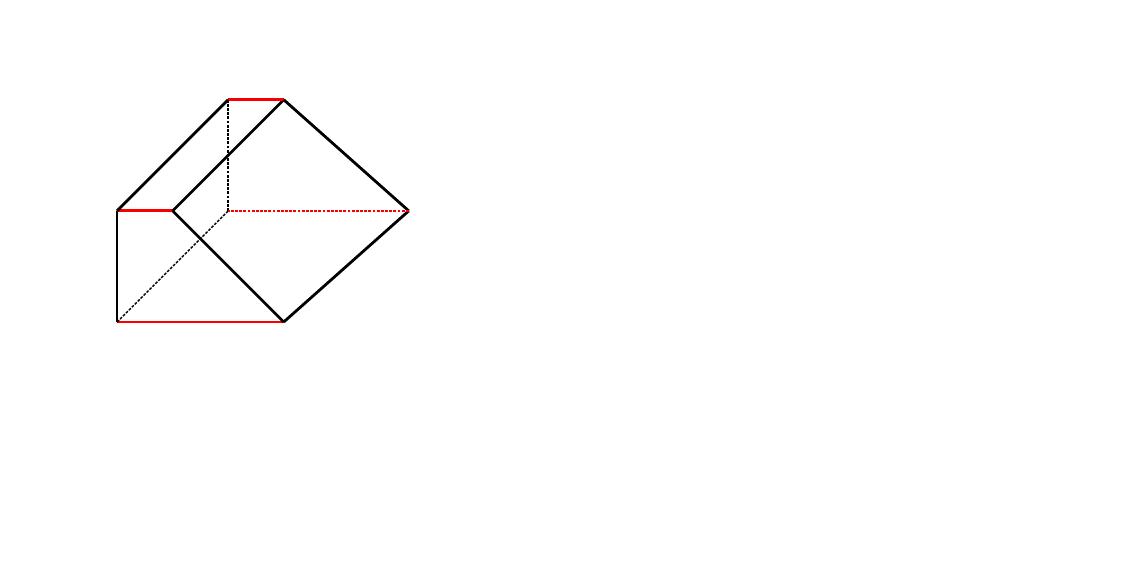}
		\caption{\label{figure_II_2} Fano varieties of type {\bf (II-2)}}
	\end{figure}          	
		
	\begin{enumerate}
		\item {\bf Case (II-2.1)} \cite[28th in Section 12.4]{IP} : Let $M = \p^1 \times F_1$ where $F_1 = \p(\mcal{O} \oplus \mcal{O}(1))$ is the Hirzebruch surface.
		Equip $M$ with the toric K\"{a}hler form $\omega$ such that $c_1(TM) = [\omega]$ so that the moment map $\mu : M \rightarrow \frak{t}^*$ 
		has the image FIgure \ref{figure_II_2} (a). 
		If we take a circle subgroup $S^1$ generated by $\xi = (0,1,-1) \in \frak{t}$, then one can check that the action is semifree and the balanced moment map is given by 
		\[
			\mu_\xi = \langle \mu, \xi \rangle.
		\] 
		The fixed point set for the $S^1$-action has four connected components each of which are all spheres and have the moment map images 
		(colored by red in Figure \ref{figure_II_2} (a))
		\[
			e_1 = \overline{(0,0,2) ~(1,0,2)}, \quad e_2 = \overline{(0,2,2) ~(1,2,2)}, \quad e_3 = \overline{(0,0,0) ~(3,0,0)}, \quad e_4 = \overline{(0,2,0) ~(3,2,0)}.
		\]
		The symplectic areas of the minimum $Z_{-2} = \mu^{-1}(e_1)$ and the maximum
		$Z_2 = \mu^{-1}(e_4)$ are 1 and 3, respectively, so that $b_{\min} = -1$ and $b_{\max} = 1$ by Corollary \ref{corollary_volume_extremum}. 
		Thus $M_{-2 + \epsilon} \cong E_{S^2}$ by Lemma \ref{lemma_Euler_extremum} and the corresponding fixed point data
		coincides with {\bf (II-2.1)} in Table \ref{table_II_2}.
		\vs{0.2cm}
		
		\item {\bf Case (II-2.2)} \cite[29th in Section 12.3]{IP} : Let $M$ be a smooth quadric in $\p^4$. As a co-adjoint orbit of $SO(5)$, $M$ admits a $SO(5)$-invariant 
		monotone K\"{a}hler form $\omega$ such that $c_1(TM) = [\omega]$. With respect to the maximal torus $T^2$-action on $(M,\omega)$, we get a moment map
		$\mu : M \rightarrow \frak{t}^*$ whose image is a square with four vertices $(0, \pm 3)$, $(\pm 3, 0)$ (see Figure \ref{figure_II_2} (b)). 
		Let $C$ be the $T^2$-invariant sphere $\mu^{-1}(\overline{(0,-3) ~(0,3)})$ and define 
		\[
			\widetilde{M} := ~\text{$T^2$-equivariant (or GKM) blow-up of $M$ along $C$}
		\]
		where the {\em $T^2$-equivariant blowing up} can be done via the following two steps:\vs{0.2cm}
		\begin{itemize}
			\item Take a $T^2$-equivariant neighborhood $\mcal{U}$ of $C$, isomorphic to some $T^2$-equivariant $\C^2$-bundle over $\p^1$, and extend the $T^2$-action 
			to (any effective Hamiltonian) $T^3$-action so that we get a toric model. 
			\item Take the toric blow-up of $\mcal{U}$ along the zero section, i.e., $C$, and restrict the toric action to the original $T^2$-action. 
		\end{itemize}
		\vs{0.2cm}
		The resulting moment map image is given in Figure \ref{figure_II_2} (b). 
		
		Now, we take a circle subgroup generated by $\xi = (0,1) \in \frak{t}$. One can directly check that 
		the $S^1$-action is semifree and the balanced moment map is given by 
		$\mu_\xi := \langle \mu, \xi \rangle - 2$.
		Moreover, the fixed components $Z_{-2}$, $Z_0$, and $Z_2$ are given by 
		\[
			Z_{-2} = \mu^{-1}(e_1), \quad Z_{-2} = \mu^{-1}(e_2), \quad Z_{-2} = \mu^{-1}(e_3)
		\]
		where 
		\[
			e_1 = \overline{(-1,-2) ~(1,-2)}, \quad e_2 = \overline{(-3,0) ~(3,0)}, \quad e_3 = \overline{(-1,2) ~(1,2)}
		\]
		(colored by red in Figure \ref{figure_II_2} (b). In particular, we have $\mathrm{vol}(Z_{-2}) = \mathrm{vol}(Z_{-2}) = 1$ so that  
		$b_{\min} = b_{\max} = -1$. By Lemma \ref{lemma_Euler_extremum}, we have 
		 $M_{-2 + \epsilon} \cong S^2 \times S^2$. So, the fixed point data for the $S^1$-action coincides with {\bf (II-2.2)} in Table \ref{table_II_2}. \\
		
	\end{enumerate} 
	
\end{example}

\section{Case III : $\mathrm{Crit} ~\mathring{H} = \{-1, 1\}$}	
\label{secCaseIIIMathrmCritMathringH11}

	In this section, we classify all TFD in the case where $\mathrm{Crit} \mathring{H} = \{-1, 1\}$. 
	Let $m = |Z_{-1}|$ ($ m \in \Z_{>0}$) be the number of isolated fixed points of index two. By the Poincar\'{e} duality, we have $|Z_1| = m$. Applying the localization theorem to 
	$1 \in H^0_{S^1}(M)$ and $c_1^{S^1}(TM) \in H^2_{S^1}(M)$, 
we obtain
	\begin{equation}\label{equation_3_localization_0}
		\begin{array}{ccl}\vs{0.2cm}
			0 = \ds \int_M 1 & = & \ds  \int_{Z_{\min}} \frac{1}{e_{Z_{\min}}^{S^1}} + m \cdot \frac{1}{-\lambda^3} + m \cdot \frac{1}{\lambda^3} + 
							\int_{Z_{\max}} \frac{1}{e_{Z_{\max}}^{S^1}}  \\ \vs{0.2cm}
							& = &  \ds  
							\int_{Z_{\min}} \frac{1}{b_{\min} u\lambda + \lambda^2} +
							\int_{Z_{\max}} \frac{1}{-b_{\max} u\lambda + \lambda^2} \\ \vs{0.2cm}
							& = &\ds \frac{- b_{\min} + b_{\max}}{\lambda^3} 
		\end{array}
	\end{equation}
	and
	\begin{equation}\label{equation_3_localization_c1}
		\begin{array}{ccl}\vs{0.2cm}
			0 =  \ds \int_M c_1^{S^1}(TM) & = & \ds  \int_{Z_{\min}} \frac{c_1^{S^1}(TM)|_{Z_{\min}}}{e_{Z_{\min}}^{S^1}} + m \cdot \frac{\lambda}{-\lambda^3} + 
							m \cdot \frac{-\lambda}{\lambda^3} + 
							\int_{Z_{\max}} \frac{c_1^{S^1}(TM)|_{Z_{\max}}}{e_{Z_{\max}}^{S^1}}  \\ \vs{0.2cm}
							& = &  \ds  
							\int_{Z_{\min}} \frac{2\lambda + (b_{\min}+2)u}{b_{\min} u\lambda + \lambda^2} -2m \cdot  \frac{\lambda}{\lambda^3} + 
							\int_{Z_{\max}} \frac{- 2\lambda + (b_{\max} + 2)u}{-b_{\max} u\lambda + \lambda^2} \\ \vs{0.2cm}
							& = &\ds \frac{- b_{\min} - b_{\max} -2m +4}{\lambda^2}.
		\end{array}
	\end{equation}
	From \eqref{equation_3_localization_0} and \eqref{equation_3_localization_c1}, we get $b_{\max} = b_{\min}$ and $b_{\min} + m = 2$. Moreover, Corollary \ref{corollary_volume_extremum}
	implies that $b_{\min} \geq -1$ and therefore we have three possible cases : 
	\[
		(b_{\min}, m) = (1,1), (0,2), (-1, 3).
	\]
	Therefore we obtain the following.
	
	\begin{theorem}\label{theorem_III}
	Let $(M,\omega)$ be a six-dimensional closed monotone semifree Hamiltonian $S^1$-manifold with $c_1(TM) = [\omega]$. Suppose that $\mathrm{Crit} H = \{ 2, 1, -1, -2\}$. 
	Then the list of all possible topological fixed point data is given by
		\begin{table}[H]
			\begin{tabular}{|c|c|c|c|c|c|c|c|c|}
				\hline
				    & $(M_0, [\omega_0])$ & $e(P_{-2}^+)$ &$Z_{-2}$  & $Z_{-1}$ & $Z_1$ & $Z_2$ & $b_2(M)$ & $c_1^3(M)$ \\ \hline \hline
				    {\bf (III.1)} & $(E_{S^2} \# ~\overline{\p^2}, 3x + 2y - E_1)$ & $-y$  &$S^2$ & 
				    	{\em pt} & {\em pt} & $S^2$ & $2$ & $54$\\ \hline    
				    {\bf (III.2)} & $(S^2 \times S^2 \# ~2\overline{\p^2}, 2x + 2y - E_1 - E_2)$ & $-y$  &$S^2$ & 
				    	{\em 2 pts} & {\em 2 pts}  & $S^2$ & $3$ & $44$\\ \hline    
				    {\bf (III.3)} & $(E_{S^2} \# ~\overline{\p^2}, 3x + 2y - E_1)$ & $-x-y$  &$S^2$ & {\em 3 ~pts} & {\em 3 ~pts} & $S^2$ & $4$ &$34$ \\ \hline
			\end{tabular}
			\vs{0.5cm}
			\caption{\label{table_III} Topological fixed point data for $\mathrm{Crit} H = \{-2, -1,1, 2\}$.}
		\end{table}
			\vs{-0.7cm}
	\end{theorem}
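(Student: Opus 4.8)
The plan is to read the classification directly off the two localization identities \eqref{equation_3_localization_0} and \eqref{equation_3_localization_c1} established just above, which together with Corollary \ref{corollary_volume_extremum} already force
\[
	b_{\min} = b_{\max}, \qquad b_{\min} + m = 2, \qquad b_{\min} \geq -1,
\]
so that $(b_{\min}, m) \in \{(1,1),\,(0,2),\,(-1,3)\}$. Each of these three triples will yield exactly one row of Table \ref{table_III}, and it remains only to determine, case by case, the reduced space $(M_0,[\omega_0])$, the Euler class $e(P_{-2}^+)$, the Betti number $b_2(M)$, and the Chern number $c_1^3(M)$. The fixed-point columns need no extra argument: the discussion opening Section \ref{secReducedSpacesNearTheExtremum} gives $Z_{-2}\cong Z_2\cong S^2$, and Lemma \ref{lemma_possible_critical_values} says each of $Z_{-1}, Z_1$ is a set of $m$ isolated points (of index two and four, respectively).

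First I would reconstruct the reduced spaces. Since $0$ is a regular value and the only interior critical values are $\pm 1$, Proposition \ref{proposition_GS} gives $M_{-2+\epsilon}\cong M_{-1-\epsilon}$, $M_{-1+\epsilon}\cong M_0\cong M_{1-\epsilon}$, and $M_{1+\epsilon}\cong M_{2-\epsilon}$, with crossing the level $-1$ blowing up the $m$ index-two points and crossing the level $1$ blowing down $m$ co-index-two points. Lemma \ref{lemma_Euler_extremum} then identifies $M_{-2+\epsilon}$ from the parity of $b_{\min}$, namely $M_{-2+\epsilon}\cong S^2\times S^2$ when $b_{\min}=0$ and $M_{-2+\epsilon}\cong E_{S^2}$ when $b_{\min}=\pm 1$, and simultaneously gives $e(P_{-2}^+)=kx-y$ with $b_{\min}\in\{2k,2k+1\}$; this produces $-y$ in \textbf{(III.1)}, \textbf{(III.2)} and $-x-y$ in \textbf{(III.3)}. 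Hence $M_0$ is the blow-up of $M_{-2+\epsilon}$ at $m$ points, and Proposition \ref{proposition_monotonicity_preserved_under_reduction} forces $[\omega_0]=c_1(TM_0)$. Applying the blow-up formula $c_1(TM_0)=\pi^*c_1(TM_{-2+\epsilon})-\sum_{i=1}^m E_i$ then yields the first column: $3x+2y-\sum_{i=1}^m E_i$ in the two cases built on $E_{S^2}$ and $2x+2y-E_1-E_2$ in \textbf{(III.2)}.

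Next I would compute the two numerical invariants. Regarding $H$ as a perfect Morse--Bott function, the Poincar\'e polynomial $P_t(M)=\sum_Z t^{\mathrm{ind}(Z)}P_t(Z)$ equals $1+(1+m)t^2+(1+m)t^4+t^6$, so $b_2(M)=1+m$, giving $2,3,4$. For $c_1^3(M)$ I would invoke the ABBV localization theorem \ref{theorem_localization} applied to $c_1^{S^1}(TM)^3$: by Remark \ref{remark_localization_surface} the two extremal spheres contribute $24+4b_{\min}$ and $24+4b_{\max}$, while an isolated fixed point with weights $(-1,1,1)$ at level $-1$, or $(-1,-1,1)$ at level $1$, contributes $\frac{(\sum_i w_i)^3}{\prod_i w_i}=-1$. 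Summing over the $2m$ points gives $c_1^3(M)=48+8b_{\min}-2m=44+10\,b_{\min}$, i.e. $54,44,34$, as in Table \ref{table_III}.

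I expect the genuinely delicate analytic step — the Diophantine reduction that in Cases I and II consumed the adjunction formula \eqref{equation_adjunction} — to be absent here, since the localization identities above already single out the three admissible pairs $(b_{\min},m)$ with no further constraints to discharge. The remaining care is bookkeeping: transporting the fibre and base classes $x,y$ correctly through the blow-up and recording the exceptional classes $E_i$ in $[\omega_0]$, and fixing the weight signs at the isolated points so that the index-two and index-four contributions to $c_1^3$ are tallied consistently. As a consistency check I would re-derive $[\omega_0]$ from the Duistermaat--Heckman evolution \eqref{equation_DH} starting at the minimal end. For the record, this uniform procedure gives in \textbf{(III.3)} the three-point blow-up $M_0\cong E_{S^2}\#3\overline{\p^2}$ with $[\omega_0]=3x+2y-E_1-E_2-E_3$.
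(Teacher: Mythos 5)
Your proposal is correct and follows essentially the paper's own route: the paper likewise derives $(b_{\min},m)\in\{(1,1),(0,2),(-1,3)\}$ from the two localization identities together with Corollary \ref{corollary_volume_extremum}, and then its (very terse) proof just reads the table off Lemma \ref{lemma_Euler_extremum} and Remark \ref{remark_localization_surface}, exactly the steps you carry out in expanded form; your bookkeeping $c_1^3(M)=48+8b_{\min}-2m=44+10b_{\min}$ reproduces the values $54,44,34$.

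One remark: for \textbf{(III.3)} your conclusion $M_0\cong E_{S^2}\#\,3\overline{\p^2}$ with $[\omega_0]=3x+2y-E_1-E_2-E_3$ disagrees with the entry $(E_{S^2}\#\,\overline{\p^2},\,3x+2y-E_1)$ printed in Table \ref{table_III}, but yours is the correct version: crossing level $-1$ blows up the $m=3$ index-two points, so $M_0$ must be a three-point blow-up of $M_{-2+\epsilon}\cong E_{S^2}$, which is what the column $b_2(M)=4$ requires and what the paper itself confirms in Section \ref{secMainTheorem}, where it states that $X_4=\p^2\#\,4\overline{\p^2}$ occurs as the reduced space precisely in case \textbf{(III.3)}. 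The printed entry is a typo (apparently copied from \textbf{(III.1)}) which your uniform derivation corrects.
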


	\begin{proof}
		The formula follows from Lemma \ref{lemma_Euler_extremum} that $e(P_{-2}^+) = kx - y$ with $b_{\min} = 2k +1$. 
		Also the Chern number computations can be easily obtained by 
		Remark \ref{remark_localization_surface}. 
	\end{proof}	

\begin{example}[Fano varieties of type {\bf (III)}]\label{example_III} 
We provide algebraic Fano varieties with holomorphic Hamiltonian $S^1$-action with topological fixed point data given in Theorem \ref{theorem_III} as follows.

	\begin{figure}[h]
		\scalebox{1}{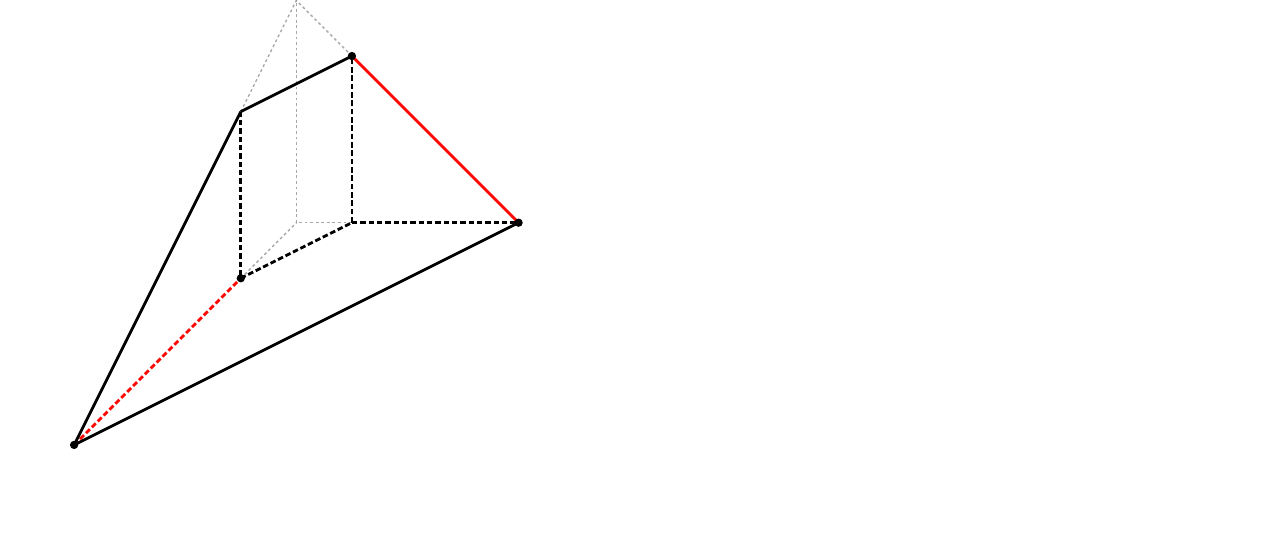}
		\caption{\label{figure_III} Fano varieties of type {\bf (III)}}
	\end{figure}          	

	\begin{enumerate}
	            \item {\bf Case (III.1)} \cite[33rd in Section 12.3]{IP} : Let $M$ be the toric blow-up of $\p^3$ along a $T^3$-invariant line. WIth respect to the $T^3$-invariant 
	            normalized monotone K\"{a}hler form, we get a moment map $\mu : M \rightarrow \frak{t}^*$ whose image is given by Figure \ref{figure_III} (a). If we take a circle 
	            subgroup $S^1$ generated by $\xi = (1,0,1) \in \frak{t}$, then the action is semifree with the balanced moment map $\mu_\xi = \langle \mu, \xi \rangle - 2$ 
	            and the fixed point set consists of 
	            \[
	            	Z_{-2} = \mu^{-1}(e_1), \quad Z_{-1} = \mu^{-1}(1,0,0), \quad Z_1 = \mu^{-1}(0,1,3), \quad \mu^{-1}(e_2)
	            \]
	            where $e_1 = \overline{(0,1,0) ~(0,4,0)}$ and $e_2 = \overline{(1,0,3) ~(4,0,0)}$. Note that $\mathrm{Vol}(Z_{-2}) = \mathrm{Vol}(Z_2) = 3$ and so 
	            $b_{\min} = b_{\max} = 1$ by Corollary \ref{corollary_volume_extremum}. Thus the fixed point data for the $S^1$-action coincides with 
	            Table \ref{table_III} {\bf (III.1)}. \vs{0.2cm}
	            
           	 \item {\bf Case (III.2)} \cite[25th in Section 12.4]{IP} : Let $M$ be the toric blow-up of $\p^3$ along two disjoint $T^3$-invariant lines. Then the image 
           	 of a moment map $\mu : M \rightarrow \frak{t}^*$
           	 (with respect to the normalized $T^3$-invariant K\"{a}hler form) is described as in Figure \ref{figure_III} (b). One can easily check that the circle action generated by $\xi = (1,0,1)
           	 \in \frak{t}$ is semifree and the balanced moment map is given by $\mu_\xi = \langle \mu, \xi \rangle - 2$. The fixed components are 
           	 \[
           	 	Z_{-2} = \mu^{-1}(e_1), \quad Z_{-1} = \{ (0,3,1), (1,0,0) \}, \quad Z_1 = \{ (0,1,3), (3,0,0)\}, \quad Z_2 = \mu^{-1}(e_2)
           	 \]
           	 where $e_1 = \overline{(0,3,0) ~(0,1,0)}$ and $e_2 = \overline{(1,0,3) ~(3,0,1)}$. As the symplectic volumes of $Z_{-2}$ and $Z_2$ are both 2, we have 
           	 $b_{\min} = b_{\max} = 0$ and so the fixed point data of the action is the same as Table \ref{table_III} {\bf (III.2)}.

           	 \item {\bf Case (III.3)} \cite[6th in Section 12.5]{IP} : Consider $M = \p^1 \times \p^1 \times \p^1$ equipped with the normalized monotone K\"{a}hler form $\omega$ on $M$
           	 with the standard $\omega$-compatible integrable complex structure $J$ on $M$. 
           	 Consider the standard $T^3$-action on $(M,\omega)$ with a moment map given by 
           	 \[
           	 	\mu([x_0, x_1], [y_0, y_1], [z_0, z_1]) = \left( \frac{2x_0|^2}{|x_0|^2 + |x_1|^2}, \frac{2|y_0|^2}{|y_0|^2 + |y_1|^2}, \frac{2|z_0|^2}{|z_0|^2 + |z_1|^2} \right).
           	 \]
           	 For the diagonal circle subgroup
           	 \[
           	 	S^1 = \{(t,t,t) ~|~ t \in S^1 \} \subset T^3, 
           	 \]
           	 generated by $\xi = (1,1,1) \in \frak{t}$, 
           	 the induced $S^1$-action on $(M,\omega, J)$ is semifree with the balanced moment map $\mu_\xi = \langle \mu, \xi \rangle - 3$. See Figure 2 in \cite[Example 6.6]{Cho}.
           	 
           	 Now, we take the $S^1$-invariant diagonal sphere $D = \{ \left([z_0, z_1], [z_0, z_1], [z_0, z_1] \right) ~|~ [z_0, z_1] \in \p^1 \}$ in $M$, which is obviously a K\"{a}hler 
           	 submanifold of $(M,\omega,J)$. One can obtain an equivariant blowing-up $(\widetilde{M}, \widetilde{\omega}, \widetilde{J})$ 
           	 of $(M,\omega,J)$ along $D$ as follows (where the construction seems to be well-known to experts): \vs{0.2cm}
           	 \begin{itemize}
           	 	\item Let $\mcal{U}$ be a sufficiently small $T^3$-invariant neighborhood of $D$ such that $\mcal{U}$ equipped with the induced K\"{a}hler structure is 
           	 	$S^1$-equivariantly isomorphic to 
           	 	some neighborhood of the zero section of $E_D := \mcal{O}(k_1) \oplus \mcal{O}(k_2)$ where 
           	 	\begin{itemize}
           	 		\item $E_D$ is equipped with the K\"{a}hler structure whose restriction on each fiber of $E_D$ equals the standard symplectic form on $\C \oplus \C$, \vs{0.1cm} 
           	 		\item $E_D$ admits an $S^1$-action compatible with the bundle structure such that the normal bundle $\nu_D$ of $D$ in $M$ is 
           	 		$S^1$-equivariantly isomorphic to $E_D$. \vs{0.1cm}
           	 	\end{itemize}  
			Note that each $\mcal{O}(k_i)$ has a fiberwise circle action so that $E_D$ has a fiberwise $T^2$-action. Together with the $S^1$-action given, $E_D$ becomes 
			a (non-complete) toric variety and a zero section becomes $T^3$-invariant. \vs{0.1cm}
						           	 	  
			\item Equip $\mcal{U}$ the toric structure (called a {\em local toric structure near $D$}) 
			induced by the $T^3$-action on $E_D$. Then one can obtain a toric blow-up of $\mcal{U}$ along $D$ so that we obtain a new K\"{a}hler manifold, say  
			$(\widetilde{M}, \widetilde{\omega}, \widetilde{J})$. We finally restrict the $T^3$-action to the $S^1$-subgroup of $T^3$. 
			
			\begin{figure}[h]
				\scalebox{1}{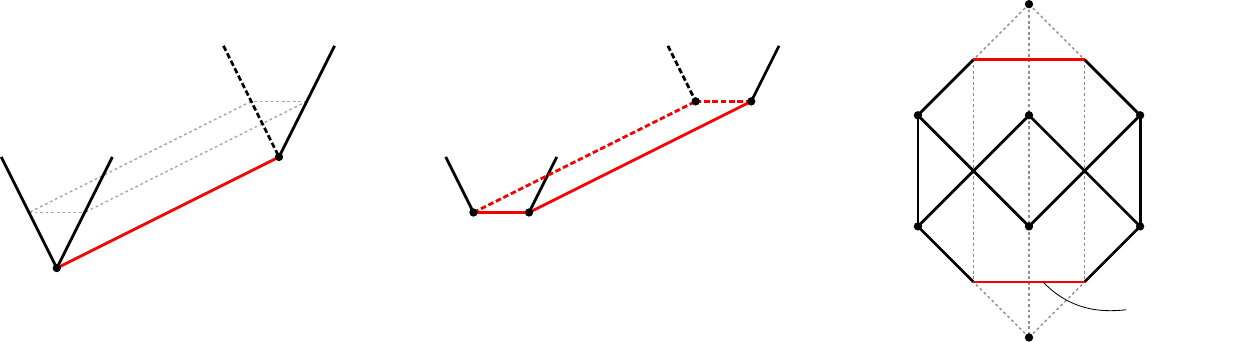}
				\caption{\label{figure_III_blowup} Blow up along an $S^1$-invariant sphere}
			\end{figure}          	
           	 \end{itemize}

	It is not hard to see that the induced $S^1$-action on $\widetilde{M}$ is semifree. Also, new fixed components which appear on $\widetilde{M}$ 
	instead of two isolated fixed points on $D$ in $M$ are two spheres and hence the fixed point data coincides with Table \ref{table_III} {\bf (III.3)}
	(see Figure \ref{figure_III_blowup}). \\
           	 
          \end{enumerate}		
\end{example}

\section{Case IV : $\mathrm{Crit} ~\mathring{H} = \{-1, 0, 1\}$}	
\label{secCaseIVMathrmCritMathringH11}

	In this section, we classify all TFD in the case where $\mathrm{Crit} \mathring{H} = \{-1,0,1\}$. 
	Let $m = |Z_{-1}|= |Z_1| > 0$ be the number of fixed points of index two. 

\begin{lemma}\label{lemma_number_indextwo}
	We have $m=1$ or $2$.
\end{lemma}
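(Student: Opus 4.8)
The plan is to extract the bound from a single localization identity, in exactly the spirit of \eqref{equation_3_localization_c1}, but now accounting for the surface fixed locus $Z_0$ at the central level. Since $c_1^{S^1}(TM)$ is a degree-two class in $H^*_{S^1}(M)$ and $\int_M$ lowers degree by $\dim M = 6$, its fibre integral lands in $H^{-4}(BS^1) = 0$; hence $\int_M c_1^{S^1}(TM) = 0$. I would then apply the ABBV localization theorem \ref{theorem_localization} and read off the contribution of each fixed component, the new feature compared to Case III being the presence of the surfaces at level $0$.

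The extremal spheres $Z_{\min}, Z_{\max}$ and the $2m$ isolated points at levels $\pm 1$ contribute exactly as in \eqref{equation_3_localization_c1}. By Remark \ref{remark_localization_surface} the two spheres together give $\frac{(2-b_{\min}) + (2 - b_{\max})}{\lambda^2}$, while each isolated point (with weights $(-1,1,1)$ at level $-1$ and $(-1,-1,1)$ at level $1$, since these have index $2$ and $4$ respectively) contributes $-\frac{1}{\lambda^2}$, for a total of $-\frac{2m}{\lambda^2}$. The genuinely new ingredient is $Z_0$. Its normal bundle in $M$ splits equivariantly as $\nu^+ \oplus \nu^-$ with weights $+1$ and $-1$, so the two weights cancel and $c_1^{S^1}(TM)|_{Z_0} = c_1(TM)|_{Z_0}$ carries no $\lambda$-term; meanwhile the leading term of $e^{S^1}(\nu_{Z_0}) = (c_1(\nu^+) + \lambda)(c_1(\nu^-) - \lambda)$ is $-\lambda^2$. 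Integrating over the two-dimensional $Z_0$ therefore yields the clean contribution $-\frac{d}{\lambda^2}$, where $d := \langle c_1(TM), [Z_0] \rangle = \int_{Z_0}\omega$ is the (total) symplectic area of $Z_0$.

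Collecting all contributions and using $\int_M c_1^{S^1}(TM) = 0$ gives the single relation
\[
  b_{\min} + b_{\max} + 2m + d = 4 .
\]
At this point I invoke positivity: by Corollary \ref{corollary_volume_extremum} both extremal spheres satisfy $b_{\min} \ge -1$ and $b_{\max} \ge -1$, and since $\omega$ is integral ($c_1(TM)=[\omega]$) and $Z_0$ is a nonempty symplectic surface, its area satisfies $d \ge 1$. Hence $2m = 4 - b_{\min} - b_{\max} - d \le 4 + 1 + 1 - 1 = 5$, so that $m \le 2$; since $m > 0$ by hypothesis, this forces $m \in \{1,2\}$.

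I expect the only delicate point to be the $Z_0$-contribution: one must verify that the opposite $\pm 1$ weights on $\nu^{\pm}$ force the $\lambda$-term of $c_1^{S^1}(TM)|_{Z_0}$ to vanish, so that the localization contribution is genuinely of order $\lambda^{-2}$ and equals $-d/\lambda^2$, and that the resulting $d$ is precisely the positive symplectic area of $Z_0$. Everything else is a direct transcription of the computation already carried out in \eqref{equation_3_localization_c1}.
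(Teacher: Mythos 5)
Your proposal is correct and takes essentially the same route as the paper: both apply the ABBV localization theorem to $c_1^{S^1}(TM)$ (whose fibre integral vanishes for degree reasons), arrive at the identity $b_{\min} + b_{\max} + 2m + \mathrm{Vol}(Z_0) = 4$, which is exactly the paper's \eqref{equation_m}, and conclude $m \leq 2$ from $b_{\min}, b_{\max} \geq -1$ (Corollary \ref{corollary_volume_extremum}) together with the positivity of the symplectic area of $Z_0$. Your verification of the $Z_0$-contribution (the $\pm 1$ weights cancel in $c_1^{S^1}(TM)|_{Z_0}$ and the equivariant Euler class has leading term $-\lambda^2$, giving $-\mathrm{Vol}(Z_0)/\lambda^2$) is precisely the computation carried out in the paper's proof.
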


\begin{proof}
	Applying the localization theorem to $c_1^{S^1}(TM)$, we obtain
	\[
		\begin{array}{ccl}\vs{0.2cm}
			0 & = & \ds \int_M c_1^{S^1}(TM) \\  \vs{0.2cm}
					& = &  \ds  
							\int_{Z_{\min}} \frac{c_1^{S^1}(TM)|_{Z_{\min}}}{e_{Z_{\min}}^{S^1}} + m \cdot \frac{\lambda}{-\lambda^3} + m \cdot \frac{-\lambda}{\lambda^3} + 
							\int_{Z_0} \frac{c_1^{S^1}(TM)|_{Z_0}}{e_{Z_0}^{S^1}} + \int_{Z_{\max}} \frac{c_1^{S^1}(TM)|_{Z_{\max}}}{e_{Z_{\max}}^{S^1}}  \\ \vs{0.2cm}
							& = &  \ds  
							\int_{Z_{\min}} \frac{2\lambda + (b_{\min}+2)u}{b_{\min} u\lambda + \lambda^2} -2m \cdot  \frac{\lambda}{\lambda^3}
							+ \int_{Z_0} \frac{\overbrace{c_1(TM)|_{Z_0}}^{=~\mathrm{Vol}(Z_0)}}{(b^- - b^+) u\lambda - \lambda^2} + 
							\int_{Z_{\max}} \frac{- 2\lambda + (b_{\max} + 2)u}{-b_{\max} u\lambda + \lambda^2} \\ \vs{0.2cm}
							& = &\ds \frac{- b_{\min} - b_{\max} -2m +4 - \mathrm{Vol}(Z_0)}{\lambda^2}
		\end{array}
	\]
	where $b^+$ and $b^-$ denote the Chern numbers of the positive and negative normal bundle of $Z_0$ in $M$, respectively. 
	So, we have 
	\begin{equation}\label{equation_m}
		b_{\min} + b_{\max} + 2m + \mathrm{Vol}(Z_0) = 4.
	\end{equation} 
	Moreover, since 
	$b_{\min}, ~b_{\max} \geq -1$ by Corollary \ref{corollary_volume_extremum}, 
	we have $m \leq 2.$ 
\end{proof}

By Lemma \ref{lemma_number_indextwo}, we may divide the classification into two cases: $m=1$ and $m=2$. 
Indeed, it follows directly from \eqref{equation_m} that there are 13 solutions for $(m, \mathrm{Vol}(Z_0), b_{\min}, b_{\max})$: 
	\begin{equation}\label{equation_8_solutions}
		m=2, ~\begin{cases}
			\underline{\mathrm{Vol}(Z_0) = 2, (b_{\min}, b_{\max}) = (-1,-1)}\\
			\underline{\mathrm{Vol}(Z_0) = 1, (b_{\min}, b_{\max}) = (-1,0)}\\
			\mathrm{Vol}(Z_0) = 1, (b_{\min}, b_{\max}) = (0,-1)\\
		\end{cases}
		m=1, ~\begin{cases}
			\underline{\mathrm{Vol}(Z_0) = 4, (b_{\min}, b_{\max}) = (-1,-1)}\\
			\underline{\mathrm{Vol}(Z_0) = 3, (b_{\min}, b_{\max}) = (-1,0)}\\
			\mathrm{Vol}(Z_0) = 3, (b_{\min}, b_{\max}) = (0,-1)\\
			\underline{\mathrm{Vol}(Z_0) = 2, (b_{\min}, b_{\max}) = (-1,1)}\\
			\underline{\mathrm{Vol}(Z_0) = 2, (b_{\min}, b_{\max}) = (0,0)}\\
			\mathrm{Vol}(Z_0) = 2, (b_{\min}, b_{\max}) = (1,-1)\\						
			\underline{\mathrm{Vol}(Z_0) = 1, (b_{\min}, b_{\max}) = (-1,2)}\\						
			\underline{\mathrm{Vol}(Z_0) = 1, (b_{\min}, b_{\max}) = (0,1)}\\						
			\mathrm{Vol}(Z_0) = 1, (b_{\min}, b_{\max}) = (1,0)\\						
			\mathrm{Vol}(Z_0) = 1, (b_{\min}, b_{\max}) = (2,-1)\\												
		\end{cases}
	\end{equation}
	As \eqref{equation_assumption}, we may rule out the case of ``$b_{\min} > b_{\max}$'', and therefore we only need to deal with 8 solutions (underlined in \eqref{equation_8_solutions})
	with $b_{\min} \leq b_{\max}$
	and obtain the following. 

	\begin{theorem}\label{theorem_IV_1}
		Let $(M,\omega)$ be a six-dimensional closed monotone semifree Hamiltonian $S^1$-manifold with $c_1(TM) = [\omega]$. Suppose that $\mathrm{Crit} H = \{ 2, 1, 0, -1, -2\}$. 
		If the number of fixed points of index two equals two, up to orientation of $M$, the list of all possible topological fixed point data is given in the Table \ref{table_IV_1}
		\begin{table}[h]
			\begin{tabular}{|c|c|c|c|c|c|c|c|c|c|}
				\hline
				    & $(M_0, [\omega_0])$ & $e(P_{-2}^+)$ &$Z_{-2}$  & $Z_{-1}$ & $Z_0$ & $Z_1$ & $Z_2$ & $b_2(M)$ & $c_1^3(M)$ \\ \hline \hline
				    {\bf (IV-1-1.1)} & \makecell{$(E_{S^2} \# ~2\overline{\p^2},$ \\$3x + 2y - E_1-E_2)$} & $-x-y$  &$S^2$ & 
				    	{\em 2 pts} &
				    		\makecell{ $Z_0 = Z_0^1 ~\dot \cup ~ Z_0^2$ \\ $Z_0^1 \cong Z_0^2 \cong S^2$ \\ 
				    		$\mathrm{PD}(Z_0^1) = x+y-E_1 - E_2$ \\ $\mathrm{PD}(Z_0^2) = x - E_1$}
					     & {\em 2 pts} & $S^2$ & $5$ & $36$\\ \hline    
				    {\bf (IV-1-1.2)} & \makecell{$(E_{S^2} \# ~2\overline{\p^2},$ \\$3x + 2y - E_1-E_2)$} & $-x-y$  &$S^2$ & 
				    	{\em 2 pts} &
				    		\makecell{ $Z_0 = Z_0^1 ~\dot \cup ~ Z_0^2$ \\ $Z_0^1 \cong Z_0^2 \cong S^2$ \\ $\mathrm{PD}(Z_0^1) = y$ \\ 
				    		$\mathrm{PD}(Z_0^2) = x+y-E_1 - E_2$}
					     & {\em 2 pts} & $S^2$ & $5$ & $36$\\ \hline    					     
				    {\bf (IV-1-1.3)} & \makecell{$(E_{S^2} \# ~2\overline{\p^2},$ \\$3x + 2y - E_1-E_2)$} & $-x-y$  &$S^2$ & 
				    	{\em 2 pts} &
				    		\makecell{ $Z_0 \cong S^2$  \\ $\mathrm{PD}(Z_0) = x+y-E_1$}
					     & {\em 2 pts} & $S^2$ & $4$ & $36$\\ \hline    
				    {\bf (IV-1-2)} & \makecell{$(E_{S^2} \# ~2\overline{\p^2},$ \\$3x + 2y - E_1-E_2)$} & $-x-y$  &$S^2$ & 
				    	{\em 2 pts} & 
				    		\makecell{ $Z_0 \cong S^2$  \\ $\mathrm{PD}(Z_0) = x - E_1$}
				    	& {\em 2 pts}  & $S^2$ & $4$ & $40$\\ \hline 
			\end{tabular}		
			\vs{0.5cm}			
			\caption{\label{table_IV_1} Topological fixed point data for $\mathrm{Crit} H = \{-2, -1,0,1, 2\}$ with $|Z_{-1}| = 2$.}
		\end{table}				   
	\end{theorem}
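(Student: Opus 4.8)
The plan is to pin down $\mathrm{PD}(Z_0)$ and its splitting into components for each numerical profile that survives. Recall the hypotheses here: $Z_{\pm2}\cong S^2$, there are two index-two points on the level $-1$ and two index-four points on the level $1$ (Lemma \ref{lemma_possible_critical_values}), and $Z_0$ is a fixed surface on the level $0$. Since we classify under $b_{\min}\le b_{\max}$, equation \eqref{equation_8_solutions} leaves exactly the profiles $(\mathrm{Vol}(Z_0),b_{\min},b_{\max})=(2,-1,-1)$ and $(1,-1,0)$. As $b_{\min}=-1$ is odd, Lemma \ref{lemma_Euler_extremum} gives $M_{-2+\epsilon}\cong E_{S^2}$ and $e(P_{-2}^+)=-x-y$; crossing the level $-1$ blows up the two index-two points (Proposition \ref{proposition_GS}), so $M_0\cong E_{S^2}\,\#\,2\overline{\p^2}=X_3$ and, by Proposition \ref{proposition_monotonicity_preserved_under_reduction}, $[\omega_0]=c_1(TM_0)=3x+2y-E_1-E_2$. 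Propagating the Euler class by Lemma \ref{lemma_Euler_class} across level $-1$ and by Corollary \ref{corollary_Euler_class_zero_level} across level $0$ gives $e(P_0^-)=-x-y+E_1+E_2$ (so $\langle e(P_0^-)^2\rangle=-1$ and $\langle c_1,e(P_0^-)\rangle=-1$) together with $e(P_0^+)=e(P_0^-)+\mathrm{PD}(Z_0)$.

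The decisive input is a constraint read off from the maximum. Reversing the action (Remark \ref{remark_bminbmax}) turns the maximum into a minimum with $b_{\min}^{\mathrm{rev}}=b_{\max}$, so Lemma \ref{lemma_Euler_extremum} shows $M_{2-\epsilon}$ is $S^2\times S^2$ or $E_{S^2}$ according as $b_{\max}$ is even or odd, with $\langle e(P_2^-)^2\rangle=-b_{\max}$. The two index-four points are index-two for the reversed action, so crossing the level $1$ downward blows up two points with exceptional classes $G_1,G_2$ and $M_{1-\epsilon}\cong M_0$; Lemma \ref{lemma_Euler_class} then yields $e(P_0^+)=e(P_2^-)-G_1-G_2$ under $H^2(M_{2-\epsilon})\hookrightarrow H^2(M_0)$, with $G_i^2=-1$ and $G_i\perp e(P_2^-)$. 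Consequently
\[
\langle e(P_0^+)^2\rangle=-b_{\max}-2,\qquad \langle c_1,e(P_0^+)\rangle=\mathrm{Vol}(Z_0)-1,
\]
and the orthogonal complement of $\langle G_1,G_2\rangle$ in $H^2(M_0)$ must carry an even (resp. odd) intersection form when $b_{\max}$ is even (resp. odd).

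I would then combine this with the component-wise adjunction \eqref{equation_adjunction} — each fixed component being a sphere forces $\langle\mathrm{PD}(Z_0)^2\rangle=\mathrm{Vol}(Z_0)-2\,\#\{\text{components}\}$ — together with disjointness of the components, the list of the six $(-1)$-classes of $X_3$ from Lemma \ref{lemma_list_exceptional}, and the residual relabeling symmetry $E_1\leftrightarrow E_2$. For $(\mathrm{Vol}(Z_0),b_{\max})=(2,-1)$, the above makes $e(P_0^+)$ a class with square $-1$ and $\langle c_1,\cdot\rangle=1$, hence one of the six exceptional classes, and $\mathrm{PD}(Z_0)=e(P_0^+)-e(P_0^-)$; sorting by self-intersection ($0$ for a single sphere, $-2$ for two disjoint spheres) and reducing modulo $E_1\leftrightarrow E_2$ produces exactly the connected class $x+y-E_1$ (type \textbf{(IV-1-1.3)}) and the split profiles $y\sqcup(x+y-E_1-E_2)$ and $(x-E_1)\sqcup(x+y-E_1-E_2)$ (types \textbf{(IV-1-1.2)} and \textbf{(IV-1-1.1)}), while $e(P_0^+)=-e(P_0^-)$ is rejected since it gives $\mathrm{PD}(Z_0)=2(x+y-E_1-E_2)$ of square $-4$, incompatible with a disjoint union of spheres. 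For $(\mathrm{Vol}(Z_0),b_{\max})=(1,0)$, $\mathrm{PD}(Z_0)$ is a single exceptional class and $e(P_0^+)$ is a root; adjunction admits $y,x-E_1,x-E_2$, but the even-complement requirement (as $M_{2-\epsilon}\cong S^2\times S^2$) holds only for $x-E_1\sim x-E_2$, whereas $y$ would force the complement to be odd, i.e. $M_{2-\epsilon}\cong E_{S^2}$ and $b_{\max}=-1$, contradicting \eqref{equation_8_solutions}; this leaves type \textbf{(IV-1-2)}.

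Finally, ABBV localization (Theorem \ref{theorem_localization}) fixes the Chern number: by Remark \ref{remark_localization_surface} the extrema contribute $24+4b_{\min}$ and $24+4b_{\max}$, the surface $Z_0$ contributes $0$, and each isolated fixed point on the levels $\pm1$ contributes $-1$, so $c_1^3(M)=44+4(b_{\min}+b_{\max})$, equal to $36$ and $40$ in the two profiles; and $b_2(M)$ is read from the perfect Morse--Bott Poincar\'e polynomial of $H$, namely $1+5t^2+5t^4+t^6$ if $Z_0$ has two components and $1+4t^2+4t^4+t^6$ if $Z_0$ is connected. The hard part will be the third step: volume and adjunction alone admit spurious classes (e.g. $\mathrm{PD}(Z_0)=y$ when $\mathrm{Vol}(Z_0)=1$), and discarding them requires the full bookkeeping at the maximum — not merely $\langle e(P_0^+)^2\rangle=-b_{\max}-2$, but also the parity of the intersection form on the blow-down $M_{2-\epsilon}$ dictated by Lemma \ref{lemma_Euler_extremum} — which amounts to a finite lattice computation in $H^2(X_3)$ organized by the hexagon of $(-1)$-curves.
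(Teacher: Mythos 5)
Your proposal is correct and arrives at exactly the four rows of Table \ref{table_IV_1}, but its core enumeration runs along a different axis than the paper's. Both arguments share the same setup ($M_0 \cong E_{S^2}\,\#~2\overline{\p^2}$, $[\omega_0]=3x+2y-E_1-E_2$, $e(P_0^-)=-x-y+E_1+E_2$, $e(P_0^+)=e(P_0^-)+\mathrm{PD}(Z_0)$, and $\langle e(P_0^+)^2,[M_0]\rangle=-b_{\max}-2$), but the paper then enumerates the six admissible pairs $(\mathrm{PD}(C_1),\mathrm{PD}(C_2))$ of vanishing cycles at $M_1$ supplied by Lemma \ref{lemma_list_exceptional} and, case by case, solves the linear equations $\langle[\omega_1],C_i\rangle=0$ together with the volume and quadratic Euler-class equations for $(a,b,c,d)$, finishing with adjunction. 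You instead classify $e(P_0^+)$ itself: in case {\bf (IV-1-1)} it has square $-1$ and $c_1$-degree $1$, hence is one of the six $(-1)$-classes of $X_3$ --- note this is a purely lattice-theoretic claim requiring the finite verification you allude to, since Lemma \ref{lemma_list_exceptional} as stated lists classes of symplectic $(-1)$-spheres rather than arbitrary classes of square $-1$ and degree $1$ --- after which $\mathrm{PD}(Z_0)=e(P_0^+)-e(P_0^-)$ is forced; your sorting by self-intersection reproduces {\bf (IV-1-1.1)}--{\bf (IV-1-1.3)}, and your rejected candidate $e(P_0^+)=-e(P_0^-)$ is precisely the paper's Case (1). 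The genuine divergence is the elimination of the spurious solution $\mathrm{PD}(Z_0)=y$ in {\bf (IV-1-2)}: the paper (its Cases (5) and (6)) observes that a third exceptional class would then also have zero area at $M_1$, so three blow-downs would have to occur while $|Z_1|=2$; you instead use that $\langle G_1,G_2\rangle^{\perp}\cong H^2(M_{2-\epsilon})$ must be an even lattice because $b_{\max}=0$ forces $M_{2-\epsilon}\cong S^2\times S^2$, and for $\mathrm{PD}(Z_0)=y$ every admissible pair of vanishing cycles among $x-E_1$, $x-E_2$, $x+y-E_1-E_2$ has odd orthogonal complement. I checked this parity computation and it is sound: both mechanisms work, the paper's being more elementary, yours giving a uniform criterion that does not depend on spotting the extra vanishing class. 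Your localization formula $c_1^3(M)=44+4(b_{\min}+b_{\max})$ and the Morse--Bott count of $b_2$ also agree with the table.
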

		
	\begin{proof}
		
		As in \eqref{equation_8_solutions}, $b_{\min} = -1$ so that $M_{-2 + \epsilon} \cong E_{S^2}$ by Lemma \ref{lemma_Euler_extremum}, and therefore $M_0$ is a 
		two points blow-up of $E_{S^2}$ where we denote the dual classes of the exceptional divisors are denote by $E_1$ and $E_2$. 
		Also, we have $e(P_{-2}^+) = kx - y = -x -y$ as 
		$b_{\min} = 2k+1 = -1$.
		
		Let $\mathrm{PD}(Z_0) = ax + by + cE_1 + dE_2$ for some $a,b,c,d \in \Z$. By the Duistermaat-Heckman theorem \eqref{equation_DH}, we have 
		\[
			\begin{array}{ccl}
				[\omega_1] = [\omega_0] - e(P_0^+) & = & (3x + 2y - E_1 - E_2) - (-x - y + E_1 + E_2 + \mathrm{PD}(Z_0)) \\ 
				& = &(4-a)x + (3-b)y - (2+c)E_1 - (2+d)E_2
			\end{array}
		\] 
		where $[\omega_0] = c_1(TM_0) = 3x + 2y - E_1 - E_2$ and $e(P_0^+) = -x - y + E_1 + E_2 + \mathrm{PD}(Z_0)$ by Lemma \ref{lemma_Euler_class}.
		Observe that exactly two blow-downs occur simultaneously at $M_1$ and we denote by $C_1, C_2$ the vanishing cycles so that
		\begin{equation}\label{equation_vanishing_IV_1}
			\langle [\omega_1], C_1 \rangle = \langle [\omega_1], C_2 \rangle = 0, \quad C_1 \cdot C_2 = 0.
		\end{equation}
		By Lemma \ref{lemma_list_exceptional}, 
		the list of all possible $(\mathrm{PD}(C_1), \mathrm{PD}(C_2))$ (up to permutation on $\{E_1, E_2\}$) is given by
		\[
			 (E_1, E_2), \quad (E_1, E_3), \quad (E_3, u - E_1 - E_2), \quad (E_1, u - E_2- E_3),
			\quad (u-E_1-E_2, u-E_1- E_3), \quad (u-E_1-E_3, u-E_2- E_3)
		\]
		with the identification $u = x+y$ and $E_3 = y$, or equivalently, in terms of $\{x,y,E_1, E_2\}$, possible candidates for $(\mathrm{PD}(C_1), \mathrm{PD}(C_2))$ are
		\[
			 (E_1, E_2), \quad (E_1, y), \quad (y, x+y  - E_1 - E_2), \quad (E_1, x - E_2),
			\quad (x+y -E_1-E_2, x - E_1), \quad (x -E_1, x -E_2)
		\] 		
		We divide the proof into two cases; {\bf (IV-1-1)}: $(b_{\min}, b_{\max}) = (-1,-1)$ and {\bf (IV-1-2)}: $(b_{\min}, b_{\max}) = (-1,0)$ as listed 
		in \eqref{equation_8_solutions}.
		\vs{0.3cm}

		\noindent
		{\bf (IV-1-1) : $m = 2, \mathrm{Vol}(Z_0) = 2, (b_{\min}, b_{\max}) = (-1,-1)$} \vs{0.3cm}
		
		\noindent 
		Note that there are at most two connected components of $Z_0$ since $\mathrm{Vol}(Z_0) = 2$. 
		Because $\mathrm{Vol}(Z_0) = 2$ and $b_{\max} = -1$, it follows that 
		\begin{equation}\label{equation_IV_1_1}
			\mathrm{Vol}(Z_0) = 2a+b+c+d = 2, \quad \langle e(P_2^-)^2, [M_{2-\epsilon}] \rangle = 1 ~\text{so that $\langle e(P_0^+)^2, [M_0] \rangle = -1$}
		\end{equation}
		by Lemma \ref{lemma_Euler_extremum} and Lemma \ref{lemma_Euler_class}. \vs{0.3cm}
			
		\noindent
		{\bf Case (1) : $(\mathrm{PD}(C_1), \mathrm{PD}(C_2)) = (E_1, E_2)$}  \vs{0.3cm}
		
		\noindent
		In this case, we have $c=d=-2$ by \eqref{equation_vanishing_IV_1}. 
		Also, by \eqref{equation_IV_1_1}, it follows that $2a + b = 6$ and
		\[
			\langle ((a-1)x + (b-1)y + (c+1)E_1 + (d+1)E_2)^2, [M_0] \rangle = 2(a-1)(b-1) - (b-1)^2 - 2 = -1.
		\]
		So, we get $a=2, ~b=2, ~c=d=-2.$, i.e., $\mathrm{PD}(Z_0) = 2x + 2y - 2E_1 - 2E_2$ which implies that $Z_0 \cdot Z_0 = -4.$ 
		Because the number of connected components of $Z_0$ is at most two, there is no such manifold by the adjunction formula \eqref{equation_adjunction} : 
		\[
			[Z_0] \cdot [Z_0] + \sum (2 - 2g_i) = \langle c_1(TM_0), [Z_0] \rangle = 2
		\]
		where the sum is taken over all connected components of $Z_0$
		\vs{0.1cm}

		\noindent		
		{\bf Case (2) : $(\mathrm{PD}(C_1), \mathrm{PD}(C_2)) = (E_1, y)$}  \vs{0.1cm}
		
		\noindent 
		By \eqref{equation_vanishing_IV_1}, we obtain $c = -2$ and $a = b + 1$. Also from \eqref{equation_IV_1_1}, we get
		\[
			b =  1 \hs{0.2cm}(a = 2) \quad \text{and} \quad d = -1, 
		\]
		that is, $\mathrm{PD}(Z_0) = 2x + y - 2E_1 - E_2$ and $[Z_0] \cdot [Z_0] = -2$. The adjunction formula \eqref{equation_adjunction} says that 
		\[
			[Z_0] \cdot [Z_0] + \sum (2 - 2g_i) = \langle c_1(TM_0), [Z_0] \rangle = 2
		\]
		and this implies that $Z_0$ consists of two spheres $Z_0^1$ and $Z_0^2$ (since $Z_0$ consists at most two components) with 
		\begin{equation}\label{equation_IV_1_1_1}
			\mathrm{PD}(Z_0^1) = x + y - E_1 - E_2 \quad \mathrm{PD}(Z_0^2) = x - E_1
		\end{equation}
		up to permutation on $\{E_1, E_2\}$. (Note that this computation can be easily obtained from the fact that each $[Z_0^i]$ is an exceptional class 
		so  that one can apply Lemma \ref{lemma_list_exceptional}.) See Table \ref{table_IV_1} : {\bf (IV-1-1.1)}. 
		
		For the Chern number computation, we apply the localization theorem \ref{theorem_localization} : 
		\begin{equation}\label{equation_Chern_IV_1_1}
			\begin{array}{ccl}\vs{0.3cm}
				\ds \int_M c_1^{S^1}(TM)^3 & = &  \ds  
							\int_{Z_{\min}} \frac{\left(c_1^{S^1}(TM)|_{Z_{\min}}\right)^3}{e_{Z_{\min}}^{S^1}} + 2 \frac{\overbrace{\lambda^3}^{Z_{-1} ~\text{term}}}
							{-\lambda^3}
							+ \int_{Z_0} \frac{\overbrace{\left(c_1^{S^1}(TM)|_{Z_0}\right)^3}^{= 0}}{e_{Z_0}^{S^1}}
							 + 2 \frac{\overbrace{-\lambda^3}^{Z_1 ~\text{term}}}{\lambda^3} + \int_{Z_{\max}} \frac{\left(c_1^{S^1}(TM)|_{Z_{\max}}\right)^3}
							 {e_{Z_{\max}}^{S^1}} \\ \vs{0.2cm}
							& = &  (24 + 4b_{\min}) + (24 + 4b_{\max}) - 4 = 36
			\end{array}			
		\end{equation}
		by Remark \ref{remark_localization_surface}.
		\vs{0.3cm}
		
		\noindent		
		{\bf Case (3) : $(\mathrm{PD}(C_1), \mathrm{PD}(C_2)) = (y, x+y-E_1-E_2)$} \vs{0.1cm}

		\noindent
		From \eqref{equation_vanishing_IV_1} and \eqref{equation_IV_1_1}, we have 
		\[
			a = b+1, \quad a+c+d = 0, \quad 2a + b + c + d = 2  \quad (\Leftrightarrow 3b + c + d = 0).
		\]
		This implies that $a = 3b$ so that $b = \frac{1}{2}$ and it leads to a contradiction. Thus no such manifold exists.
		\vs{0.3cm}
		
		\noindent		
		{\bf Case (4) : $(\mathrm{PD}(C_1), \mathrm{PD}(C_2)) = (E_1, x-E_2)$} \vs{0.1cm}

		\noindent
		We similarly obtain 
		\[
			c = -2, \quad b+d = 1, \quad 2a + b + c + d = 2 \quad (\Leftrightarrow 2a + c = 1).
		\]
		Then we see that $a = \frac{3}{2}$, which is not an integer. Therefore no such manifold exists. 
		
		\vs{0.3cm}
		\noindent		
		{\bf Case (5) : $(\mathrm{PD}(C_1), \mathrm{PD}(C_2)) = (x+y-E_1 - E_2, x-E_1)$} \vs{0.1cm}

		\noindent
		In this case, we have 
		\[
			a+c+d = 0, \quad b+c = 1, \quad 2a+b+c+d=2 \quad (\Leftrightarrow 2a + d = 1), 
		\]
		and 
		\[
			\langle e(P_0^+)^2, [M_0] \rangle = 2(a-1)(b-1) - (b-1)^2 - (c+1)^2 - (d+1)^2 = -1.
		\]
		Those equations have the unique solution $(a,b,c,d) = (1,1,0,-1)$ so that $\mathrm{PD}(Z_0) = x + y - E_2$. In particular, we have 
		$[Z_0] \cdot [Z_0] = 0$ and $Z_0$ is connected, and therefore $Z_0 \cong S^2$ by the adjunction formula \eqref{equation_adjunction}. 
		The Chern number can be obtained in exactly the same way as in \eqref{equation_Chern_IV_1_1}. See Table \ref{table_IV_1} : {\bf (IV-1-1.3)}. 
		(The connectedness of $Z_0$ is proved as follows : if $Z_0^1$ and $Z_0^2$ are connected components of $Z_0$, then 
			\begin{itemize}
				\item $\mathrm{Vol}(Z_0^1) = \mathrm{Vol}(Z_0^2) = 1$, and 
				\item $[Z_0^1] \cdot [Z_0^1] = -1$ and $[Z_0^2] \cdot [Z_0^2] = 1$ since 
				\[
					[Z_0^i] \cdot [Z_0^i] + 2 - 2g_i = 1\quad \text{and} \quad [Z_0^1] \cdot [Z_0^1] + [Z_0^2] \cdot [Z_0^2] = 0.
				\]
			\end{itemize}
		Then $Z_0^1 \cong S^2$ by the adjunction formula \eqref{equation_adjunction} and $\mathrm{PD}(Z_0^1)$ should be on the list in Lemma \ref{lemma_list_exceptional}.
		However, it contradicts that $\mathrm{PD}(Z_0^1) \cdot (x + y -E_2 - \mathrm{PD}(Z_0^1)) = 0$. Therefore $Z_0$ has to be connected.)			
		\vs{0.3cm}
		
		\noindent		
		{\bf Case (6) : $(\mathrm{PD}(C_1), \mathrm{PD}(C_2)) = (x - E_1, x - E_2)$} \vs{0.1cm}		
		
		\noindent 
		Again by \eqref{equation_vanishing_IV_1} and \eqref{equation_IV_1_1} , we get
		\[
			b+c = 1, \quad b+d = 1, \quad 2a + b + c + d = 2 \quad (\Leftrightarrow 2a + d = 2a + c = 1), 
		\]
		and 
		\[
			\langle e(P_0^+)^2, [M_0] \rangle = 2(a-1)(b-1) - (b-1)^2 - (c+1)^2 - (d+1)^2 = -1.
		\]
		Then we get the unique solution $(a,b,c,d) = (1,2,-1,-1)$ so that $\mathrm{PD}(Z_0) = x + 2y - E_1 - E_2$. Moreover, since $[Z_0] \cdot [Z_0] = -2$, the adjunction formula
		 \eqref{equation_adjunction}
		implies that $Z_0$ consists of two spheres $Z_0^1$ and $Z_0^2$ such that $[Z_0^1] \cdot [Z_0^1] = [Z_0^2] \cdot [Z_0^2] = -1$. 
		Applying Lemma \ref{lemma_list_exceptional}, we finally obtain
		\[
			\mathrm{PD}(Z_0^1) = y \quad \text{and} \quad \mathrm{PD}(Z_0^2) = x + y - E_1 - E_2.
		\]
		See Table \ref{table_IV_1} : {\bf (IV-1-1.2)}. \vs{0.5cm}
		
		\noindent
		{\bf (IV-1-2) : $m = 2, \mathrm{Vol}(Z_0) = 1, (b_{\min}, b_{\max}) = (-1,0)$} \vs{0.3cm}		
		
		\noindent 
		In this case, $Z_0$ is connected by the assumption $\mathrm{Vol}(Z_0) = 1$.  
		Together with the condition $b_{\max} = 0$, we have
		\begin{equation}\label{equation_IV_1_2}
			\mathrm{Vol}(Z_0) = 2a+b+c= 1, \quad \langle e(P_2^-)^2, [M_{2-\epsilon}] \rangle = 0 ~\text{so that $\langle e(P_0^+)^2, [M_0] \rangle = -2$}
		\end{equation}
		by Lemma \ref{lemma_Euler_extremum}. The latter equation can be re-written as 
		\begin{equation}\label{equation_IV_1_2_Euler}
			2(a-1)(b-1) - (b-1)^2 - (c+1)^2 - (d+1)^2 = -2.
		\end{equation}
		Using \eqref{equation_vanishing_IV_1}, \eqref{equation_IV_1_2}, \eqref{equation_IV_1_2_Euler}, we analyze each cases as follows:
		\vs{0.3cm}

		\noindent
		{\bf Case (1) : $(\mathrm{PD}(C_1), \mathrm{PD}(C_2)) = (E_1, E_2)$}  \vs{0.1cm}		
		\[
			c=-2, \quad d=-2, \quad 2a+b+c+d = 1 \quad (\Leftrightarrow 2a + b = 5), \quad 2(a-1)(b-1) - (b-1)^2 = 0
		\]
		so that $(a,b,c,d) = (2,1,-2,-2)$, i.e., $\mathrm{PD}(Z_0) = 2x + y  - 2E_1 - 2E_2$. However, since $[Z_0] \cdot [Z_0] = -5$, no such manifold exists by the adjunction 
		formula \eqref{equation_adjunction}.
		
		\vs{0.3cm}
		\noindent
		{\bf Case (2) : $(\mathrm{PD}(C_1), \mathrm{PD}(C_2)) = (E_1, y)$}  \vs{0.1cm}
		\[
			c = -2, \quad a = b+1, \quad 2a+b+c+d = 1 \quad (\Leftrightarrow 3b + d = 1), \quad \underbrace{2(a-1)(b-1) - (b-1)^2 - (d+1)^2}_{= ~-8b^2 + 12b - 5} = -1
		\]
		so that $(a,b,c,d) = (2,1,-2,-2)$, i.e., $\mathrm{PD}(Z_0) = 2x + y  - 2E_1 - 2E_2$. Again by \eqref{equation_adjunction}, no such manifold exists.

		\vs{0.3cm}		
		\noindent
		{\bf Case (3) : $(\mathrm{PD}(C_1), \mathrm{PD}(C_2)) = (y, x+y-E_1 - E_2)$}  \vs{0.1cm}	
		\[
			a = b+1, \quad a+c+d = 0, \quad 2a+b+c+d = 1 \quad (\Leftrightarrow a + b = 1 ~\Leftrightarrow ~b=0, a=1), \quad (c+1)^2 + (d+1)^2 = 1
		\]
		so that $(a,b,c,d) = (1,0,-1,0)$ or $(1,0,0,-1)$ (where they are equal up to permutation on $\{E_1, E_2\}$.) 
		In this case, we have $Z_0 \cong S^2$ by \eqref{equation_adjunction}. See Table \ref{table_IV_1} : {\bf (IV-1-2)}.

		\vs{0.3cm}		
		\noindent
		{\bf Case (4) : $(\mathrm{PD}(C_1), \mathrm{PD}(C_2)) = (E_1, x - E_2)$}  \vs{0.1cm}		
		\[
			c=-2, \quad b+d = 1, \quad 2a+b+c+d = 1 \quad (\Leftrightarrow 2a + c = 0 \Leftrightarrow a=1), \quad (b-1)^2 + (d+1)^2 = 1
		\]
		so that $(a,b,c,d) = (1,1,-2,0)$ or $(1,2,-2,-1)$. In either case, $[Z_0] \cdot [Z_0] < -1$ so that it violates \eqref{equation_adjunction}. 
		Therefore no such manifold exists.

		\vs{0.3cm}		
		\noindent
		{\bf Case (5) : $(\mathrm{PD}(C_1), \mathrm{PD}(C_2)) = (x+y - E_1 - E_2, x - E_1)$}  \vs{0.1cm}		
		\[
			a+c+d = 0, \quad b+c = 1, \quad \underbrace{2a+b+c+d = 1}_{\Leftrightarrow ~a+b = 1, ~2a+d=0}, \quad \underbrace{-2b(b-1) - (b-1)^2 - (2-b)^2 - (2b-1)^2}_{= ~-8b^2 + 12b - 6} = -2
		\]
		and we obtain $(a,b,c,d) = (0,1,0,0)$, i.e., $\mathrm{PD}(Z_0) = y$. However, we can check that a cycle representing $x - E_2$ vanishes on $M_1$ which leads to a contradiction.
		Therefore no such manifold exists.
				
		\vs{0.3cm}		
		\noindent
		{\bf Case (6) : $(\mathrm{PD}(C_1), \mathrm{PD}(C_2)) = (x - E_1, x - E_2)$}  \vs{0.1cm}										
		\[
			b+c=1, \quad b+d = 1, \quad \underbrace{2a+b+c+d = 1}_{\Leftrightarrow 2a + d =0, ~2a + c = 0}, \quad \underbrace{2(a-1)(b-1) - (b-1)^2 - (c+1)^2 - (d+1)^2}_{4a(a-1) -
			4a^2 - (1-2a)^2 - 
			(1-2a)^2} = -2. 
		\]
		So, $(a,b,c,d) = (0,1,0,0)$. 
		Similar to {\bf Case (5)}, a cycle representing $x+y - E_1 -E_2$ vanishes on $M_1$, and therefore no such manifold exists. \vs{0.5cm}

	\end{proof}		
		
\begin{example}[Fano varieties of type {\bf (IV-1)}]\label{example_IV_1} 
	In this example, we illustrate algebraic Fano varieties with holomorphic Hamiltonian $S^1$-action with topological fixed point data given in Theorem \ref{theorem_IV_1}. 

	\begin{enumerate}
	            \item {\bf Case (IV-1-1.1)} \cite[2nd in Section 12.6]{IP} : Let $Y$ be the toric blow-up of $\p^3$ along two disjoint $T^3$-invariant lines where the moment map image
	            			is described in Figure \ref{figure_IV_1_1_1} (see also Figure \ref{figure_III} (b)).
	            			We take two disjoint lines $C_1$ and $C_2$ corresponding to the edges 
	            			\[
	            				e_1 = \overline{(0,1,3) ~(1,0,3)} \quad e_2 = \overline{(0,1,0) ~(1,0,0)}
	            			\]
	            			respectively. Let $M$ be the monotone toric blow-up of $Y$ along $C_1$ and $C_2$ so that the resulting moment polytope 
	            			(with respect to a moment map $\mu : M \rightarrow \R^3$
	            			is illustrated 
	            			on the right of Figure \ref{figure_IV_1_1_1}. Now, we take the circle subgroup of $T^3$ generated by
	            			$\xi = (1,0,1)$. It is straightforward (by calculating the inner product of $\xi$ and each primitive edge vector) that the action is semifree
	            			and the balanced moment map is given by 
	            			\[
	            				\mu_\xi = \langle \mu, \xi \rangle -2.
	            			\] 
	            			Moreover, the fixed point set consists of 
	            			\begin{itemize}
	            				\item $Z_{-2} = \mu^{-1}(\overline{(0,2,0) ~(0,3,0)})$
	            				\item $Z_{-1} = \mu^{-1}(0,3,1) \cup \mu^{-1}(0,1,1)$
	            				\item $Z_{-2} = \mu^{-1}(\overline{(0,2,2) ~(0,1,2)}) \cup \mu^{-1}(\overline{(1,0,1) ~(2,0,0)})$
	            				\item $Z_1 = \mu^{-1}(1,0,2) \cup \mu^{-1}(3,0,0)$
	            				\item $Z_2 = \mu^{-1}(\overline{(2,0,2) ~(3,0,1)})$
	            			\end{itemize}
					Furthremore, the symplectic areas of $Z_{-2}, Z_0^1, Z_0^2,$  and $Z_2$ are all 1 (see \eqref{equation_IV_1_1_1})
					and hence $b_{\min} = b_{\max} = -1$. Thus the fixed point data of $M$ coincides with the one in Table \ref{table_IV_1} {\bf (IV-1-1.1)}.
	            	            
			\begin{figure}[H]
				\scalebox{1}{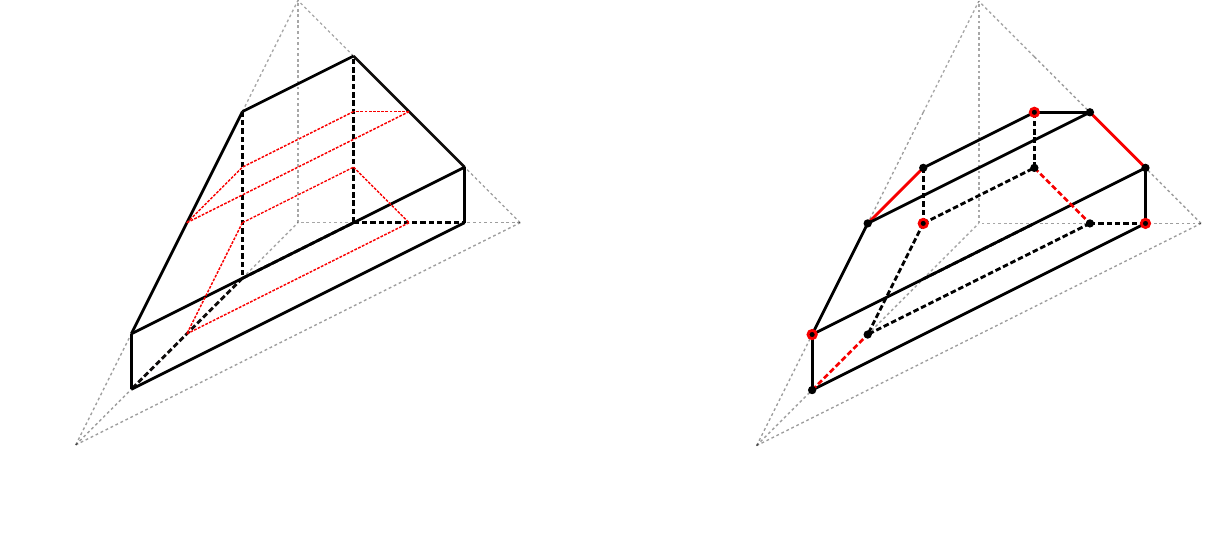}
				\caption{\label{figure_IV_1_1_1} Blow up of $Y$ along two lines $C_1$ and $C_2$ lying on the same exceptional components}
			\end{figure}          	
	            	            
           	 \item {\bf Case (IV-1-1.2)} \cite[3rd in Section 12.6]{IP} : Let $M = \p^1 \times X_3$ where $X_k$ denotes the blow-up of $\p^2$ at $k$ generic points.
           					 In particular we assume that $X_3$ is the toric blow-up of $\p^3$ equipped with the standard toric structure. 
           	 
				           	 Equip $M$ with the monotone toric K\"{a}hler form $\omega$ such that $c_1(TM) = [\omega]$ so that the moment map $\mu : M \rightarrow \R^3$ 
				           	 has the image given in Figure \ref{figure_IV_1_1_2}. Take $\xi = (0,-1,1)$. Then the $S^1$-action generated by $\xi$ is semifree and the 
				           	 balanced moment map is given by $\mu_\xi = \langle \mu, \xi \rangle$. The fixed point set consists of 
	            				\begin{itemize}
	            					\item $Z_{-2} = \mu^{-1}(\overline{(0,2,0) ~(1,2,0)})$
		            				\item $Z_{-1} = \mu^{-1}(0,1,0) \cup \mu^{-1}(2,1,0)$
		            				\item $Z_{-2} = \mu^{-1}(\overline{(0,2,2) ~(1,2,2)}) \cup \mu^{-1}(\overline{(1,0,0) ~(2,0,0)})$
	           	 				\item $Z_1 = \mu^{-1}(0,1,2) \cup \mu^{-1}(2,1,2)$
	            					\item $Z_2 = \mu^{-1}(\overline{(2,0,2) ~(1,0,2)})$
	            				\end{itemize}
						It is not hard to check that the fixed point data of $M$ coincides with the one in Table \ref{table_IV_1} {\bf (IV-1-1.2)}.
						
			\begin{figure}[H]
				\scalebox{1}{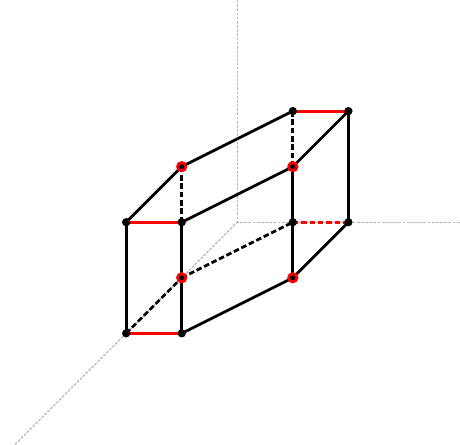}
				\caption{\label{figure_IV_1_1_2} $\p^1 \times X_3$}
			\end{figure}          	

           	 \item {\bf Case (IV-1-1.3)} \cite[7th in Section 12.5]{IP} : 
           	 Let $(W, \omega)$ be the monotone complete flag variety 
           	 given in Example \ref{example_II_1} (1) equipped with the Hamiltonian $T^2$-action where the moment polytope
           	 is described on the left of Figure \ref{figure_IV_1_1_3}. 
           	 
           	 Consider two edges $A$ and $B$ indicated in Figure \ref{figure_IV_1_1_3} and 
           	 denote by $C_A$ and $C_B$ the corresponding $T^2$-invariant spheres, respectively. (Note that $C_A$ and $C_B$ are curves of bidegree $(1,0)$ and $(0,1)$
           	 with respect to the Pl\"{u}cker embedding $W \subset \p^2 \times \p^2$.) 
      		 Using local toric structures on the normal bundles of $C_A$ and $C_B$, respectively, we may take $T^2$-equivariant blow up of $W$ along $C_A$ and $C_B$
      		 and denote the resulting manifold by $M$ and the image of the moment map $\mu : M \rightarrow \R^2$
      		  is given on the right of Figure \ref{figure_IV_1_1_3} (with respect to the monotone 
		 K\"{a}hler form). 
      		            	 
			\begin{figure}[H]
				\scalebox{1}{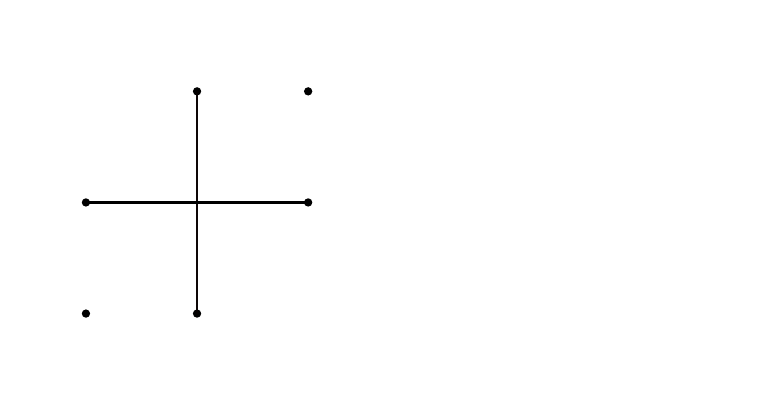}
				\caption{\label{figure_IV_1_1_3} Blow up of $W$ along two disjoint curves of bidegree $(1,0)$ and $(0,1)$. }
			\end{figure}          	

      		 Take the circle subgroup $S^1$ generated by $\xi = (1,0)$. Then the $S^1$-action is semifree and the balanced moment map is given by 
      		 $\mu_\xi = \langle \mu, \xi \rangle - 2$. The fixed point set consists of       		 

		\begin{itemize}
			\item $Z_{-2} = \mu^{-1}(\overline{(0,1) ~(0,2)})$
			\item $Z_{-1} = \mu^{-1}(1,1) \cup \mu^{-1}(1,3)$
			\item $Z_{-2} = \mu^{-1}(\overline{(2,1) ~(2,3)})$
			\item $Z_1 = \mu^{-1}(3,1) \cup \mu^{-1}(3,3)$
			\item $Z_2 = \mu^{-1}(\overline{(4,2) ~(4,3)})$
		\end{itemize}
		and we can easily check that this should coincide with {\bf (IV-1-1.3)} in Table \ref{table_IV_1}.
           	(Note that the symplectic area of $Z_{-2}$ and $Z_2$ are both 1 so that $b_{\min} = b_{\max} = -1$.) \vs{0.5cm}

           	 \item {\bf Case (IV-1-2)} \cite[9th in Section 12.5]{IP} : Let Y be the toric blow-up of $\p^3$ along two disjoint $T^3$-invariant lines where the moment map 
           	 image is given on the left of Figure \ref{figure_IV_1_2} (see also Figure \ref{figure_III} (b)). Let $M$ be a toric blow up of $Y$ along a $T$-invariant exceptional line 
           	 (corresponding to the edge $A$ in Figure \ref{figure_IV_1_2}). With respect to the $T^3$-invariant monotone K\"{a}hler form, the image of a moment map $\mu$ is described 
           	 on the right of Figure \ref{figure_IV_1_2}. 
           	 
			\begin{figure}[H]
				\scalebox{1}{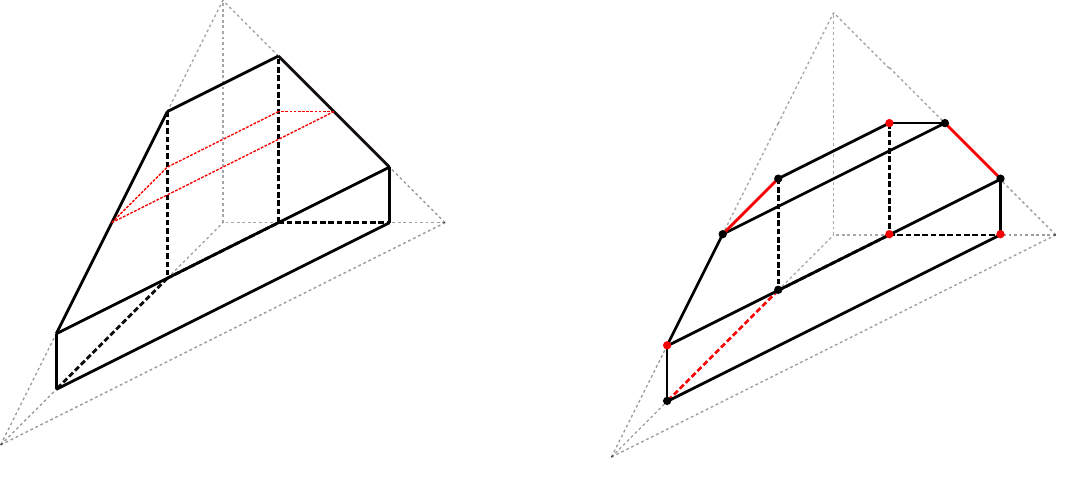}
				\caption{\label{figure_IV_1_2} Blow up of $Y$ along an exceptional line on $Y$. }
			\end{figure}          	
			\vs{-0.5cm}
		\noindent
		Take the circle subgroup $S^1$ of $T^3$ generated by $\xi = (-1, 0, -1)$. Then it is easy to check that the $S^1$-action is semifree and has the balanced moment map
		given by $\mu_\xi = \langle \mu, \xi \rangle + 2$. Also, the fixed point set consists of 

		\begin{itemize}
	            				\item $Z_{-2} = \mu^{-1}(\overline{(2,0,2) ~(3,0,1)})$
	            				\item $Z_{-1} = \mu^{-1}(1,0,2) \cup \mu^{-1}(3,0,0)$
	            				\item $Z_{-2} = \mu^{-1}(\overline{(0,1,2) ~(0,2,2)})$
	            				\item $Z_1 = \mu^{-1}(0,3,1) \cup \mu^{-1}(1,0,0)$
	            				\item $Z_2 = \mu^{-1}(\overline{(0,1,0) ~(0,3,0)})$
		\end{itemize}
		where $\mathrm{Area}(Z_{-2}) = \mathrm{Area}(Z_{0}) = 1$ and $\mathrm{Area}(Z_{2}) = 2$. Thus one can see that the fixed point data of $M$ 
		coincides with {\bf (IV-1-2)} in Table \ref{table_IV_1}. 
          \end{enumerate}		
          
\end{example}

	\begin{theorem}\label{theorem_IV_2}
		Let $(M,\omega)$ be a six-dimensional closed monotone semifree Hamiltonian $S^1$-manifold with $c_1(TM) = [\omega]$. Suppose that $\mathrm{Crit} H = \{ 2, -1, 0, 1, -2\}$. 
		If the number of fixed points of index two equals one, up to orientation of $M$, the list of all possible topological fixed point data is given in the Table \ref{table_IV_2}
		\begin{table}[h]
			\begin{tabular}{|c|c|c|c|c|c|c|c|c|c|}
				\hline
				    & $(M_0, [\omega_0])$ & $e(P_{-2}^+)$ &$Z_{-2}$  & $Z_{-1}$ & $Z_0$ & $Z_1$ & $Z_2$ & $b_2(M)$ & $c_1^3(M)$ \\ \hline \hline
				    {\bf (IV-2-1.1)} & \makecell{$(E_{S^2} \# ~\overline{\p^2},$ \\$3x + 2y - E_1)$} & $-x-y$  &$S^2$ & {\em pt} &
				    		\makecell{ $Z_0 \cong S^2$  \\ $\mathrm{PD}(Z_0) = 2x + y - E_1$}				    
				     &{\em pt} & $S^2$ & $3$ &$38$ \\ \hline
				    {\bf (IV-2-1.2)} & \makecell{$(E_{S^2} \# ~\overline{\p^2},$ \\$3x + 2y - E_1)$} & $-x-y$  &$S^2$ & {\em pt} &
				    		\makecell{ $Z_0 = Z_0^1 ~\dot \cup ~ Z_0^2$ \\ $Z_0^1 \cong Z_0^2 \cong S^2$ \\ 
				    		$\mathrm{PD}(Z_0^1) = \mathrm{PD}(Z_0^2) = x + y - E_1$}
				     &{\em pt} & $S^2$ & $4$ &$38$ \\ \hline
				    {\bf (IV-2-2.1)} & \makecell{$(E_{S^2} \# ~\overline{\p^2},$ \\$3x + 2y - E_1)$} & $-x-y$  &$S^2$ & {\em pt} & 
				    		\makecell{ $Z_0 \cong S^2$  \\ $\mathrm{PD}(Z_0) = x + y$}						    
				    &{\em pt} & $S^2$ & $3$ &$42$ \\ \hline
				    {\bf (IV-2-2.2)} & \makecell{$(E_{S^2} \# ~\overline{\p^2},$ \\$3x + 2y - E_1)$} & $-x-y$  &$S^2$ & {\em pt} & 
				    		\makecell{ $Z_0 = Z_0^1 ~\dot \cup ~ Z_0^2$ \\ $Z_0^1 \cong Z_0^2 \cong S^2$ \\ 
				    		$\mathrm{PD}(Z_0^1) = y$ \\ $\mathrm{PD}(Z_0^2)= x + y - E_1$}				    				    
				    &{\em pt} & $S^2$ & $4$ &$42$ \\ \hline				    
				    {\bf (IV-2-3)} & \makecell{$(E_{S^2} \# ~\overline{\p^2},$ \\$3x + 2y - E_1)$} & $-x-y$  &$S^2$ & {\em pt} &
				    		\makecell{ $Z_0 \cong S^2$  \\ $\mathrm{PD}(Z_0) = x$}				    
				     &{\em pt} & $S^2$ & $3$ &$46$ \\ \hline
				    {\bf (IV-2-4)} & \makecell{$(E_{S^2} \# ~\overline{\p^2},$ \\$3x + 2y - E_1)$} & $-x-y$  &$S^2$ & {\em pt} &
				    		\makecell{ $Z_0 \cong S^2$ 
				    		 \\ $\mathrm{PD}(Z_0) = E_1$}			    
				     &{\em pt} & $S^2$ & $3$ &$50$ \\ \hline
				    {\bf (IV-2-5)} & \makecell{$(S^2 \times S^2  \# ~\overline{\p^2},$ \\$2x + 2y - E_1)$} & $-y$  &$S^2$ & {\em pt} & 
				    		\makecell{ $Z_0 = Z_0^1 \dot \cup Z_0^2$ \\ $Z_0^1 \cong Z_0^2 \cong S^2$ \\ 
				    		$\mathrm{PD}(Z_0^1) = x - E_1$ \\  $\mathrm{PD}(Z_0^2) = y - E_1$  \\ }				    
				    &{\em pt} & $S^2$ & $4$ &$46$ \\ \hline
				    {\bf (IV-2-6)} & \makecell{$(S^2 \times S^2  \# ~\overline{\p^2},$ \\$2x + 2y - E_1)$} & $-y$  &$S^2$ & {\em pt} &
				    		\makecell{ $Z_0 \cong S^2$  \\ $\mathrm{PD}(Z_0) = x - E_1$}					    
				     &{\em pt} & $S^2$ & $3$ &$50$ \\ \hline
			\end{tabular}		
			\vs{0.5cm}			
			\caption{\label{table_IV_2} Topological fixed point data for $\mathrm{Crit} H = \{-2, -1,0,1, 2\}$ with $|Z_{-1}| = 1$.}
		\end{table}				   
	\end{theorem}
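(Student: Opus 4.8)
The plan is to follow the same blueprint as the proof of Theorem~\ref{theorem_IV_1}, now with a single index-two point at level $-1$ and a single co-index-two point at level $+1$. First I would pin down the topology of the central reduced space $M_0$. Since $m=1$, Proposition~\ref{proposition_GS} shows that crossing the level $-1$ blows up $M_{-2+\epsilon}$ at one point, while Corollary~\ref{corollary_Euler_class_zero_level} shows that crossing the level $0$ leaves the diffeomorphism type unchanged; hence $M_0 \cong M_{-2+\epsilon}\,\#\,\overline{\p^2}$ with a single exceptional class $E_1$. By Lemma~\ref{lemma_Euler_extremum}, the underlined values $b_{\min}=-1$ and $b_{\min}=0$ occurring in \eqref{equation_8_solutions} force $M_{-2+\epsilon}\cong E_{S^2}$ and $M_{-2+\epsilon}\cong S^2\times S^2$ respectively, giving $e(P_{-2}^+)=-x-y$ or $-y$ as recorded in Table~\ref{table_IV_2}.

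Next, writing $\mathrm{PD}(Z_0)=ax+by+cE_1$, I would run the Duistermaat--Heckman theorem \eqref{equation_DH} together with Lemma~\ref{lemma_Euler_class} to express $e(P_0^+)=e(P_0^-)+\mathrm{PD}(Z_0)$ and hence $[\omega_1]$ in terms of $a,b,c$. The single co-index-two point at level $+1$ corresponds to exactly one blow-down, so the vanishing cycle $C$ satisfies $\langle[\omega_1],C\rangle=0$ and is an exceptional class; thus $\mathrm{PD}(C)$ must appear on McDuff's list in Lemma~\ref{lemma_list_exceptional}. For each of the six underlined $m=1$ solutions of \eqref{equation_m} (those with $b_{\min}\le b_{\max}$, per \eqref{equation_assumption}), I would then solve the resulting system: the volume constraint $\langle[\omega_0],[Z_0]\rangle=\mathrm{Vol}(Z_0)$, the Euler-square constraint $\langle e(P_2^-)^2,[M_{2-\epsilon}]\rangle=-b_{\max}$ translated via Lemma~\ref{lemma_Euler_extremum} into a condition on $\langle e(P_0^+)^2,[M_0]\rangle$, and the vanishing condition $\langle[\omega_1],C\rangle=0$, discarding every choice that yields a non-integral or inconsistent solution for $(a,b,c)$.

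Once each surviving $\mathrm{PD}(Z_0)$ is isolated, I would apply the adjunction formula \eqref{equation_adjunction}. Because each component of $Z_0$ is a symplectic surface whose area is bounded above by $\mathrm{Vol}(Z_0)$, adjunction simultaneously bounds the number of components and forces each to be a sphere; invoking Lemma~\ref{lemma_list_exceptional} once more then identifies the individual classes $\mathrm{PD}(Z_0^i)$. This is exactly the mechanism that splits a single volume solution into its connected and disconnected subcases: e.g. $\mathrm{Vol}(Z_0)=4$, $(b_{\min},b_{\max})=(-1,-1)$ yields both {\bf (IV-2-1.1)} (with $Z_0$ connected) and {\bf (IV-2-1.2)} (with $Z_0=Z_0^1\,\dot\cup\,Z_0^2$), and similarly $\mathrm{Vol}(Z_0)=3$ produces {\bf (IV-2-2.1)} and {\bf (IV-2-2.2)}, while the remaining four solutions give the connected cases {\bf (IV-2-3)}--{\bf (IV-2-6)}. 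The self-intersection data together with the orthogonality $[Z_0^1]\cdot[Z_0^2]=0$ are what pin each $\mathrm{PD}(Z_0^i)$ down uniquely (here only the single exceptional class $E_1$ is present, so no permutation ambiguity arises).

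Finally, the Chern numbers follow mechanically from localization via Remark~\ref{remark_localization_surface}: the two extremal spheres contribute $(24+4b_{\min})+(24+4b_{\max})$, the vanishing factor $(c_1^{S^1}(TM)|_{Z_0})^3=0$ kills the $Z_0$-term, and the single index-two/co-index-two pair contributes $-2$ as in \eqref{equation_Chern_IV_1_1} (with the factor $2m$ replaced by $2$), giving $c_1^3=48+4(b_{\min}+b_{\max})-2$, which matches every row of Table~\ref{table_IV_2}. I expect the main obstacle to be the bookkeeping of the case analysis — six volume solutions over two possible base surfaces with up to six exceptional classes each — and, more subtly, the correct determination of the connectivity of $Z_0$: distinguishing when a fixed homology class is realized by a single connected sphere versus splitting as two embedded spheres requires a careful combination of adjunction, the exceptional-class list, and the orthogonality relation, precisely the step that generated the extra subcases in Theorem~\ref{theorem_IV_1}.
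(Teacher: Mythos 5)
Your proposal is correct and follows essentially the same route as the paper's proof: the same constraint system (Duistermaat--Heckman together with the Euler-square condition $\langle e(P_2^-)^2,[M_{2-\epsilon}]\rangle=-b_{\max}$ from Lemma~\ref{lemma_Euler_extremum}, transported to $\langle e(P_0^+)^2,[M_0]\rangle$ through the blow-down at level $1$), the same identification of the single vanishing cycle via Lemma~\ref{lemma_list_exceptional}, the same case division over the six underlined $m=1$ solutions of \eqref{equation_8_solutions}, adjunction plus the exceptional-class list to settle connectivity of $Z_0$, and the same localization count $c_1^3 = 48 + 4(b_{\min}+b_{\max}) - 2$. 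Two small cautions: \textbf{(IV-2-5)} is in fact a disconnected case ($Z_0 = Z_0^1 \,\dot\cup\, Z_0^2$ with $\mathrm{PD}(Z_0^1) = x - E_1$, $\mathrm{PD}(Z_0^2) = y - E_1$), not connected as you assert --- though your machinery, properly executed, yields exactly this --- and your ``inconsistent solution'' filter must explicitly include checking that no \emph{second} cycle acquires zero $[\omega_1]$-area, since the paper eliminates several otherwise-integral solutions (e.g.\ $(a,b,c)=(2,1,-2)$ in case \textbf{(IV-2-2)}, which adjunction alone does not exclude) precisely because they would force two simultaneous blow-downs at $M_1$, contradicting $|Z_1|=1$.
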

	
	\begin{proof}
	
		As we have seen in \eqref{equation_8_solutions}, $b_{\min}$ is either  $-1$ or $0$.  For each cases, we have 
		\begin{equation}\label{equation_bmin_IV_2}
			\begin{cases}
				M_{-2 + \epsilon} \cong E_{S^2}, \quad c_1(TM_0) = [\omega_0] = 3x + 2y - E_1, \quad e(P_{-2}^+) = kx - y = -x -y & \text{if $b_{\min} = -1$} \\ \vs{0.1cm}
				M_{-2 + \epsilon} \cong S^2 \times S^2, \quad c_1(TM_0) = [\omega_0] = 2x + 2y - E_1, \quad e(P_{-2}^+) = kx - y = -y & \text{if $b_{\min} = 0$} 
			\end{cases}
		\end{equation}
		by Lemma \ref{lemma_Euler_extremum}, where $M_0$ is a one point blow-up of $M_{-2 + \epsilon}$ and $E_1$ is the dual class of the 
		exceptional divisor on $M_0$.
		
		Let $\mathrm{PD}(Z_0) = ax + by + cE_1$ for some $a,b,c \in \Z$. By the Duistermaat-Heckman theorem \eqref{equation_DH}, we have 
		\[
				[\omega_1] = [\omega_0] - e(P_0^+) = \begin{cases}
						(4-a)x + (3-b)y - (2+c)E_1 & \text{if $b_{\min} = -1$}\\ \vs{0.1cm}
						(2-a)x + (3-b)y - (2+c)E_1 & \text{if $b_{\min} = 0$}.				
				\end{cases} 
		\] 
		Moreover, only one blow-down occurs at $M_1$ with the vanishing cycle $C$ so that 
		\begin{equation}\label{equation_vanishing_IV_2}
			\langle [\omega_1], C \rangle = 0. 
		\end{equation}
		By Lemma \ref{lemma_list_exceptional}, 
		the list of all possible $\mathrm{PD}(C)$ is given by
		\[
			u - E_1 - E_2, \quad E_1, \quad E_2
		\]
		or equivalently, in terms of $\{x,y,E_1\}$, 
		\begin{itemize}
			\item if $b_{\min} = -1$, then 
				\[
					x - E_1, \quad E_1, \quad y.
				\] 		
			\item if $b_{\min} = 0$, then 
				\[
					E_1, \quad x - E_1, \quad y - E_1.
				\]
		\end{itemize}
		
		Now we compute the fixed point data for remaining six cases (on the right of \eqref{equation_8_solutions}) as follows.
		(Note that the Chern number computation can be easily obtained from the localization theorem \ref{theorem_localization} and Remark \ref{remark_localization_surface}.)
		
		\vs{0.3cm}
				
		\noindent
		{\bf (IV-2-1) : $m = 1, \mathrm{Vol}(Z_0) = 4, (b_{\min}, b_{\max}) = (-1,-1)$} \vs{0.3cm}				
		
		\noindent
		Because $\mathrm{Vol}(Z_0) = 4$ and $b_{\max} = -1$, it follows that 
		\begin{equation}\label{equation_IV_2_1}
			\mathrm{Vol}(Z_0) = 2a+b+c= 4, \quad \langle e(P_2^-)^2, [M_{2-\epsilon}] \rangle = 1 ~\text{so that $\langle e(P_0^+)^2, [M_0] \rangle = 0$}
		\end{equation}
		by Lemma \ref{lemma_Euler_extremum}. The latter equation can be re-written as
		\[
			2(a-1)(b-1) - (b-1)^2 - (c+1)^2 = 0 \quad \quad \text{as \quad $e(P_0^+) = (a-1)x + (b-1)y + (c+1)E_1$.}
		\]
		\vs{0.1cm}
		
		\noindent
		{\bf Case (1) :} $\mathrm{PD}(C) = x - E_1$. \vs{0.1cm}
		
		\noindent
		Since $b+c = 1$ by \eqref{equation_vanishing_IV_2}, we have $2a=3$ by \eqref{equation_IV_2_1}, and hence no such manifold exists. \vs{0.3cm}
		
		\noindent
		{\bf Case (2) :} $\mathrm{PD}(C) = E_1$.  \vs{0.1cm}

		\noindent
		In this case, we have $c = -2$ by \eqref{equation_vanishing_IV_2}. Then \eqref{equation_IV_2_1} implies that 
		\[
			2a + b = 6, \quad 2(a-1)(b-1) - (b-1)^2 = (b-1)(2a - b -1) = 1
		\]
		which has the unique integeral solution $(a,b,c) = (2,2,-2)$. So, $\mathrm{PD}(Z_0) = 2x + 2y - 2E_1$ and $[Z_0] \cdot [Z_0] = 0$.
		Then the adjunction formula \eqref{equation_adjunction} implies that 
		\[
			[Z_0] \cdot [Z_0] + \sum (2 - 2g_i) = 4 \quad \text{(sum is taken over connected components of $Z_0$)}. 
		\]
		Thus there are at least two spheres, namely $Z_0^1$ and $Z_0^2$. Moreover, they satisfy (again by \eqref{equation_adjunction})
		\[
			[Z_0^1] \cdot [Z_0^1] \geq -1 \quad \text{and } \quad [Z_0^2] \cdot [Z_0^2] \geq -1.
		\]
		Note that if $[Z_0^i] \cdot [Z_0^i] = -1$, then $([Z_0] - [Z_0^i]) \cdot [Z_0^i] \neq 0$ by Lemma \ref{lemma_list_exceptional}.  So, 
		\[
			[Z_0^1] \cdot [Z_0^1] \geq 0  \quad \text{and } \quad [Z_0^2] \cdot [Z_0^2] \geq 0.
		\]		
		In particular, we have $\mathrm{Vol}(Z_0^i) = [Z_0^i] \cdot [Z_0^i] + 2 \geq 2$ so that the only possibility is that 
		\[
			[Z_0^i] \cdot [Z_0^i] = 0, \quad i=1,2.
		\]
		One can easily see that  $\mathrm{PD} (Z_0^1) = \mathrm{PD}(Z_0^2) = x + y - E_1$. See Table \ref{table_IV_2} : {\bf (IV-2-1.2)}. 
		
		\vs{0.3cm}
		
		\noindent
		{\bf Case (3) :} $\mathrm{PD}(C) = y$.  \vs{0.1cm}
		
		\noindent
		From \eqref{equation_vanishing_IV_2}, we get $a = b + 1$. Then, by \eqref{equation_IV_2_1}, 
		\[
			3b + c = 2, \quad 2b(b-1) - (b-1)^2 - (c+1)^2 = 0, 
		\]
		whose solution is $(a,b,c) = (2, 1, -1)$, that is, $\mathrm{PD}(Z_0) = 2x + y  - E_1$ (and so $[Z_0] \cdot [Z_0] = 2$). Then the adjunction formula 
		\[
			[Z_0] \cdot [Z_0] + \sum (2 - 2g_i) = 4 
		\]
		implies that 
		there exists a sphere component, say $Z_0^1$, of $Z_0$. If we denote by $\mathrm{PD}(Z_0^1) = \alpha x + \beta y + \gamma E_1$, it satisfies 
		\[
			 2\alpha\beta - \beta^2 - \gamma^2 + 2 = [Z_0^1] \cdot [Z_0^1] + 2 = \langle c_1(TM_0), [Z_0^1] \rangle = 2\alpha + \beta + \gamma.
		\]
		Also, since $([Z_0] - [Z_0^1]) \cdot [Z_0^1] = 0$, 
		\[
			\left( (2 - \alpha)x + (1 - \beta)y - (1+\gamma)E_1) \right) \cdot (\alpha x + \beta y + \gamma E_1) = -2\alpha\beta + \alpha + \beta + \gamma + \beta^2 + \gamma^2 = 0.
		\]
		Combining those two equations above, we get $\alpha = 2$ and 
		\[
			\beta^2 + \gamma^2 - 3\beta + \gamma + 2 = 0 \quad \Leftrightarrow \quad (\beta - \frac{3}{2})^2 + (\gamma + \frac{1}{2})^2 - \frac{1}{2} = 0.
		\]
		Therefore, $(\beta, \gamma) = (2, 0), (2, -1), (1, 0), (1, -1)$. In any case, $\mathrm{Vol}(Z_0^1) \geq 4$ which is impossible unless $Z_0^1 = Z_0$. 
		This implies that $Z_0$ is connected and is a sphere. See {\bf (IV-2-1.1)}. 
		
		 \vs{0.3cm}
			
		\noindent
		{\bf (IV-2-2) : $m = 1, \mathrm{Vol}(Z_0) = 3, (b_{\min}, b_{\max}) = (-1,0)$} \vs{0.3cm}				
		
		\noindent
		By Lemma \ref{lemma_Euler_extremum}, it follows that 
		\begin{equation}\label{equation_IV_2_2}
			\mathrm{Vol}(Z_0) = 2a+b+c= 3, \quad \langle e(P_2^-)^2, [M_{2-\epsilon}] \rangle = 0 ~\text{so that $\langle e(P_0^+)^2, [M_0] \rangle = -1$}
		\end{equation}
		where the latter equation is equivalent to 
		\[
			2(a-1)(b-1) - (b-1)^2 - (c+1)^2 = -1.
		\]
		\vs{0.1cm}
		
		\noindent
		{\bf Case (1) :} $\mathrm{PD}(C) = x - E_1$. \vs{0.1cm}
		
		\noindent
		By \eqref{equation_vanishing_IV_2}, we have $b+c = 1$ so that $a = 1$ and $(b-1)^2 + (c+1)^2 = 1$ (and so $(b,c) = (1, 0)$ or $(2, -1)$). \vs{0.1cm}
		\begin{itemize}
			\item If $(a,b,c) = (1,1,0)$, then $\mathrm{PD}(Z_0) = x + y$ and $[Z_0] \cdot [Z_0] = 1$ so that there exists at least one sphere component, denote by $Z_0^1$,
			 in $Z_0$. 

				Suppose that $Z_0$ is not connected. Then $\mathrm{Vol}(Z_0^1) = 1$ or $2$. If $\mathrm{Vol}(Z_0^1) = 1$, then $[Z_0^1] \cdot [Z_0^1] = -1$
				by the adjunction formula, and hence $\mathrm{PD}(Z_0^1) = E_1, y, x - E_1$ by Lemma \eqref{lemma_list_exceptional}.
				In either case, it follows that 
				\[
					[Z_0^1] \cdot ([Z_0] - [Z_0^1]) \neq 0
				\]
				which leads to a contradiction. So, $\mathrm{Vol}(Z_0^1) \neq 1$.
				
				On the other hand, if $\mathrm{Vol}(Z_0^1) = 2$, then $[Z_0^1] \cdot [Z_0^1] = 0$ by the adjunction formula. If we let 
				$\mathrm{PD}(Z_0^1) = \alpha x + \beta y + \gamma E_1$, then 
				\begin{itemize}
					\item $2\alpha\beta - \beta^2 - \gamma^2 = 0$, \quad ($\because ~[Z_0^1]\cdot [Z_0^1] = 0$), 
					\item $\alpha - 2\alpha\beta + \beta^2 + \gamma^2 = 0$, \quad ($\because ~[Z_0^1] \cdot ([Z_0] - [Z_0^1]) = 0$), 
					\item $2\alpha + \beta + \gamma = 2$ \quad ($\because ~\mathrm{Vol}(Z_0^1) = 2$)
				\end{itemize}
				whose (real) solution does not exist. Thus $Z_0$ is connected and we have $Z_0 \cong S^2$. See Table \ref{table_IV_2}: {\bf (IV-2-2.1)}.\vs{0.2cm}
			\item If $(a,b,c) = (1, 2, -1)$, i.e., $\mathrm{PD}(Z_0) = x + 2y - E_1$, then we have $[Z_0] \cdot [Z_0] = -1$ and there are at least two 
			sphere components $Z_0^1$ and $Z_0^2$ in $Z_0$ by the adjunction formula. Since $\mathrm{Vol}(Z_0^1) + \mathrm{Vol}(Z_0^2) \leq 3$,
			we may assume that $\mathrm{PD}(Z_0^1) = 1$ (so that $[Z_0^1] \cdot [Z_0^1] = -1$).
			Then we obtain $\mathrm{PD}(Z_0^1) = y$ by the fact that $([Z_0] - [Z_0^1]) \cdot [Z_0^1] = 0$ and Lemma \ref{lemma_list_exceptional}. So, 
			\[
				Z_0^1 \cong S^2 ~(\mathrm{PD}(Z_0^1) = y) \quad \text{and} \quad Z_0^2 \cong S^2 ~(\mathrm{PD}(Z_0^2) = x + y - E_1)
			\]
			See Table \ref{table_IV_2}: {\bf (IV-2-2.2)}. (Note that $\mathrm{Vol}(Z_0^2) \neq 1$ otherwise $\mathrm{PD}(Z_0^2)$ also should be $y$ which contradicts 
			that $[Z_0^1] \cdot [Z_0^2] = 0$.)
		\end{itemize}
		
		\vs{0.3cm}
		
		\noindent
		{\bf Case (2) :} $\mathrm{PD}(C) = E_1$.  \vs{0.1cm}

		\noindent
		Since $c=-2$ by \eqref{equation_vanishing_IV_2}, we have 
		\[
			2a + b = 5 \quad \text{and} \quad 2(a-1)(b-1) - (b-1)^2 = 0
		\]
		where it has a unique integral solution $(a,b,c) = (2,1,-2)$. However, since
		\[
			[\omega_1] \cdot y = (2x + 2y) \cdot y = 0, 
		\]
		the exceptional divisor representing $y$ vanishes at $M_1$, i.e., two simultaneous blow-downs occur at $M_1$. 
		Thus no such manifold exists.
		
%
		\vs{0.3cm}
		
		\noindent
		{\bf Case (3) :} $\mathrm{PD}(C) = y$.  \vs{0.1cm}
		
		\noindent
		Now we have $a = b+1$ and so 
		\[
			3b+c = 1 \quad \text{and} \quad 2b(b-1) - (b-1)^2 - (c+1)^2 = -1
		\]
		by \eqref{equation_IV_2_2}. This has a unique integral solution $(a,b,c) = (2,1,-2)$. This case is exactly 
		the same as in {\bf Case (2)} above and we have $[\omega_1] \cdot E_1 = 0$. 
		Then two simultaneous blow-downs occur at $M_1$ which is impossible. 
		Therefore there is no such manifold.

		 \vs{0.3cm}

		\noindent
		{\bf (IV-2-3) : $m = 1, \mathrm{Vol}(Z_0) = 2, (b_{\min}, b_{\max}) = (-1,1)$} \vs{0.3cm}				

		\noindent
		In  this case, we have 
		\begin{equation}\label{equation_IV_2_3}
			\mathrm{Vol}(Z_0) = 2a+b+c= 2, \quad \langle e(P_2^-)^2, [M_{2-\epsilon}] \rangle = -1 ~\text{so that $\langle e(P_0^+)^2, [M_0] \rangle = -2$}
		\end{equation}
		where the latter one is 
		\[
			2(a-1)(b-1) - (b-1)^2 - (c+1)^2 = -2.
		\]
		\vs{0.1cm}
		
		\noindent
		{\bf Case (1) :} $\mathrm{PD}(C) = x - E_1$. \vs{0.1cm}
		
		\noindent
		Using $b+c = 1$ by \eqref{equation_vanishing_IV_2}, we have $a = \frac{1}{2}$. Thus no such manifold exists.
		
		\vs{0.3cm}
		
		\noindent
		{\bf Case (2) :} $\mathrm{PD}(C) = E_1$.  \vs{0.1cm}

		\noindent
		Substituting $c = -2$, we have 
		\[
			2a + b = 4, \quad 2(a-1)(b-1) - (b-1)^2 = -1
		\]
		and therefore the only possible solution is $(a,b) = (1,2)$, i.e., $\mathrm{PD}(Z_0) = x + 2y - 2E_1$. However, the adjunction formula \eqref{equation_adjunction}
		implies that 
		\[
			[Z_0] \cdot [Z_0] + \sum (2 - 2g_i) = -4 + \sum (2 - 2g_i) = 2, 
		\]
		i.e., there are three sphere components $Z_0^1, Z_0^2, Z_0^3$ and hence $\mathrm{Vol}(Z_0) \geq 3$ which leads to a contradiction.
		So, no such manifold exists.

		\vs{0.3cm}
		
		\noindent
		{\bf Case (3) :} $\mathrm{PD}(C) = y$.  \vs{0.1cm}
		
		\noindent
		In this case, $a = b+1$ so that 
		\[
			3b + c = 0, \quad 2b(b-1) - (b-1)^2 - (c+1)^2 = -2
		\]
		and it has a unique solution $(a,b,c) = (1,0,0)$. If $Z_0$ is not connected, then the adjunction formula implies that $Z_0$ consists of two spheres $Z_0^1$ and $Z_0^2$
		each of which has symplectic area $1$ (so that it is an exceptional sphere). On the other hand, by the fact that $[Z_0^1] \cdot [Z_0^2] = 0$ and Lemma \ref{lemma_list_exceptional}
		imply that the dual classes of $Z_0^1$ and $Z_0^2$ are 
		$y$ and $E_1$, respectively. Then it follows that $\mathrm{PD}(Z_0) = x \neq \mathrm{PD}(Z_0^1) + \mathrm{PD}(Z_0^2)$. So, $Z_0$ is connected and 
		\[
			Z_0 \cong S^2, \quad \mathrm{PD}(Z_0) = x.
		\]
		See Table \ref{table_IV_2}: {\bf (IV-2-3)}.
		 \vs{0.3cm}

		\noindent
		{\bf (IV-2-4) : $m = 1, \mathrm{Vol}(Z_0) = 1, (b_{\min}, b_{\max}) = (-1,2)$} \vs{0.3cm}				

		\noindent
		As $\mathrm{Vol}(Z_0) = 1$, $Z_0$ is connected. Also, 
		\begin{equation}\label{equation_IV_2_4}
			\mathrm{Vol}(Z_0) = 2a+b+c= 1, \quad \langle e(P_2^-)^2, [M_{2-\epsilon}] \rangle = -2 ~\text{so that $\langle e(P_0^+)^2, [M_0] \rangle = -3$}
		\end{equation}
		i.e., 
		\[
			2(a-1)(b-1) - (b-1)^2 - (c+1)^2 = -3.
		\]
		\vs{0.1cm}
		
		\noindent
		{\bf Case (1) :} $\mathrm{PD}(C) = x - E_1$. \vs{0.1cm}
		
		\noindent
		We have $b+c = 1$ so that $(a,b,c) = (0, 2,-1)$ or $(0,0,1)$. If $(a,b,c) = (0, 2,-1)$, then $\mathrm{PD}(Z_0) = 2y - E_1$ and $[Z_0] \cdot [Z_0] = -5$. 
		This is impossible by the adjunction formula since $Z_0$ is connected. So, no such manifold exists. On the other hand, if $(a,b,c) = (0,0,1)$, i.e., $\mathrm{PD}(Z_0) = E_1$, 
		then we have 
		\[
			Z_0 \cong S^2, \quad \mathrm{PD}(Z_0) = E_1. 
		\]
		See Table \ref{table_IV_2}: {\bf (IV-2-4)}.
		
		\vs{0.3cm}
		
		\noindent
		{\bf Case (2) :} $\mathrm{PD}(C) = E_1$.  \vs{0.1cm}

		\noindent
		Now, we have $c = -2$ and \eqref{equation_IV_2_4} implies that 
		\[
			2a + b = 3, \quad 2(a-1)(b-1) - (b-1)^2 = -2
		\]		
		which has no integral solution. Thus there is no such manifold.
		\vs{0.3cm}
		
		\noindent
		{\bf Case (3) :} $\mathrm{PD}(C) = y$.  \vs{0.1cm}
		
		\noindent
		It follows that $a = b+1$, and we obtain
		\[
			3b + c = -1, \quad 2b(b-1) - (b-1)^2 - (c+1)^2 = -3
		\]
		where no integral solution exists. Thus there is no such manifold.
		 \vs{0.3cm}
		
		\noindent
		{\bf (IV-2-5) : $m = 1, \mathrm{Vol}(Z_0) = 2, (b_{\min}, b_{\max}) = (0,0)$} \vs{0.3cm}				

		\noindent
		Since $b_{\min} = 0$, we have $M_{-2 + \epsilon} \cong S^2 \times S^2$ and so $e(P)_{-2}^+ = -y$ and $c_1(TM_0) = 2x + 2y - E_1$, see \eqref{equation_bmin_IV_2}.  
		Also, Lemma \ref{lemma_Euler_extremum} implies that 
		\begin{equation}\label{equation_IV_2_5}
			\mathrm{Vol}(Z_0) = 2a+2b+c= 2, \quad \langle e(P_2^-)^2, [M_{2-\epsilon}] \rangle = 0 ~\text{so that $\langle e(P_0^+)^2, [M_0] \rangle = -1$}
		\end{equation}
		where the latter equation can be re-written by 
		\[
			2a(b-1) - (c+1)^2 = -1.
		\]
		Note that if $Z_0$ is connected, then $[Z_0] \cdot [Z_0] = 0$ by the adjunction formula.
		Also, if $Z_0$ is disconnected with two components $Z_0^1$ and $Z_0^2$ such that $\mathrm{Vol}(Z_0^1) = \mathrm{Vol}(Z_0^2) = 1$, then 
		the adjunction formula implies that $[Z_0^1] \cdot [Z_0^1] = [Z_0^2] \cdot [Z_0^2] = -1$. In particular, $[Z_0] \cdot [Z_0] = -2$.
		
		Recall that a possible dual class of the cycle $C$ vanishing at the reduced space $M_1$ is $x - E_1$, $E_1$, or $y - E_1$ by Lemma \ref{lemma_list_exceptional}.
		\vs{0.1cm}
		
		\noindent
		{\bf Case (1) :} $\mathrm{PD}(C) = x - E_1$. \vs{0.1cm}
		
		\noindent
		By \eqref{equation_vanishing_IV_2}, we have $b+c = 1$ so that 
		\[
			2a -c  = 0, \quad -2ac - (c+1)^2 = -1
		\]
		where it has a unique integral solution $(a,b,c) = (0,1,0)$. However, in this case, a cycle representing $y - E_1$ is also vanishing at $M_1$. In other words, 
		two blow-downs occur at $M_1$. So, no such manifold exists.
		\vs{0.3cm}
		
		\noindent
		{\bf Case (2) :} $\mathrm{PD}(C) = E_1$.

		\noindent
		In this case, $c = -2$ so that 
		\[
			a+b = 2, \quad 2a(b-1) = 0
		\]
		where the solution is $(a,b,c) = (0,2,-2)$ or $(1,1,-2)$. If $(a,b,c) = (0,2,-2)$, then $[Z_0] \cdot [Z_0] = -4$ so that there are at least three spheres in $Z_0$ by the adjunction formula, 
		which is impossible since 
		$\mathrm{Vol}(Z_0) = 2$. Thus there is no such manifold. 
		
		If $(a,b,c) = (1,1,-2)$, then $[Z_0] \cdot [Z_0] = -2$ and so $Z_0$ consists of two spheres, say $Z_0^1$ and $Z_0^2$, each of which has self-intersection number $-1$ by the 
		adjunction formula. 
		By Lemma \ref{lemma_list_exceptional}, we get 
		\[
			Z_0^1 \cong Z_0^2 \cong S^2, \quad \mathrm{PD}(Z_0^1) = x - E_1, \quad \mathrm{PD}(Z_0^2) = y - E_1.
		\]
		See Table \ref{table_IV_2}: {\bf (IV-2-5)}.
		\vs{0.3cm}
		
		\noindent
		{\bf Case (3) :} $\mathrm{PD}(C) = y - E_1$.
		
		\noindent
		From \eqref{equation_vanishing_IV_2}, we have $a + c = 0$ and so 
		\[
			a + 2b = 2, \quad 2a(b-1) - (1-a)^2 = -1
		\]
		and it has the unique solution $(a,b,c) = (0, 1, 0)$. Similar to {\bf Case (1)}, a cycle representing $x - E_1$ also vanishes at $M_1$ so that two blow-downs occur simultaneously 
		at $M_1$. Therefore there is no such manifold.
		 \vs{0.3cm}
		
		\noindent
		{\bf (IV-2-6) : $m = 1, \mathrm{Vol}(Z_0) = 1, (b_{\min}, b_{\max}) = (0,1)$} \vs{0.3cm}	

		\noindent
		Note that $Z_0$ is connected and the condition $b_{\min} = 0$ implies that $e(P)_{-2}^+ = -y$ by Lemma \ref{lemma_Euler_extremum}. 
		Moreover, $\mathrm{Vol}(Z_0) = 1$ and $b_{\max} = 1$ implies that 
		\begin{equation}\label{equation_IV_2_5}
			\mathrm{Vol}(Z_0) = 2a+2b+c= 1, \quad \langle e(P_2^-)^2, [M_{2-\epsilon}] \rangle = -1 ~\text{so that $\langle e(P_0^+)^2, [M_0] \rangle = -2$}
		\end{equation}
		where the latter one is equivalent to 
		\[
			2a(b-1) - (c+1)^2 = -2.
		\]
		\vs{0.1cm}
		
		\noindent
		{\bf Case (1) :} $\mathrm{PD}(C) = x - E_1$. \vs{0.1cm}
		
		\noindent
		Since $b+c = 1$, we have 
		\[
			2a + b = 0, \quad 2a(-2a-1) - (2 + 2a)^2 = -2
		\]
		so that $(a,b,c) = (-1,2,-1)$. That is, $\mathrm{PD}(Z_0) = -x + 2y - E_1$ and so $[Z_0] \cdot [Z_0] = -5$. This contradicts the fact that $Z_0$ is conencted by the adjunction formula.
		So, there is no such manifold.
		\vs{0.3cm}
		
		\noindent
		{\bf Case (2) :} $\mathrm{PD}(C) = E_1$.

		\noindent
		We have $c = -2$ by \eqref{equation_vanishing_IV_2} which implies that $2a + 2b = 3$. Thus no such manifold exists.
		\vs{0.3cm}
		
		\noindent
		{\bf Case (3) :} $\mathrm{PD}(C) = y - E_1$.
		
		\noindent
		In this case, we have $a + c = 0$ so that 
		\[
			a + 2b = 1, \quad 2a(b-1) - (1-a)^2 = -2.
		\]							
		It has a unique solution $(a,b,c) = (1,0,-1)$, i.e., 
		\[
			Z_0 \cong S^2, \quad \mathrm{PD}(Z_0) = x - E_1.
		\] 
		See Table \ref{table_IV_2}: {\bf (IV-2-6)}.
	\end{proof}

	\begin{example}[Fano variety of type {\bf (IV-2)}]\label{example_IV_2} In this example, we describe Fano varieties of type {\bf (IV-2)} listed in Theorem \ref{theorem_IV_2}.
		
		\begin{itemize}
	           	 \item {\bf (IV-2-1.1)} \cite[20th in Section 12.4]{IP}  : Recall that a smooth quadric in $\p^4$, isomorphic to a coadjoint orbit of $\mathrm{SO}(5)$, admits a 
	           	 maximal torus $T^2$ action whose moment map image is given on the left of Figure \ref{figure_IV_2_1_1} (see also \cite[Example 6.4]{Cho}). Let $M$ be 
	           	 the blow-up of the smooth quadric along two disjoint $T^2$-invariant spheres with the induced $T^2$-action. Then the corresponding moment map can be 
	           	 described as on the right of Figure \ref{figure_IV_1_1_1}. 
	           	 
	           	 	\begin{figure}[h]
	           	 		\scalebox{1}{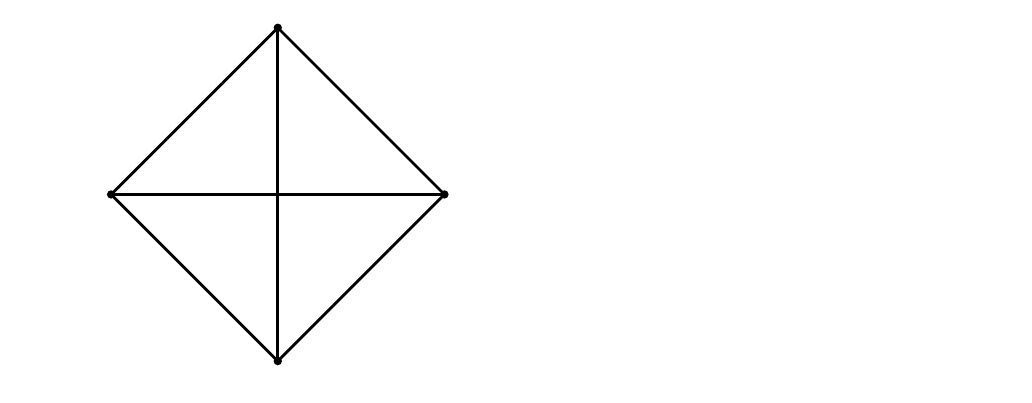}
	           	 		\caption{\label{figure_IV_2_1_1} Blow up of the smooth quadric along two disjoint lines}
	           	 	\end{figure}
	           	 	
			Now, we take the $S^1$-subgroup of $T^2$ generated by $\xi = (0,1) \in \frak{t}$. Then the fixed point set consists of 
	            				\begin{itemize}
	            					\item $Z_{-2} = S^2$ with  $\mu(Z_{-2}) = \overline{(0,-2) ~(1,-2)}$ and $\mathrm{Vol}(Z_{-2}) = 1$,
		            				\item $Z_{-1} = \mathrm{pt}$ with $\mu(Z_{-1}) = (2,-1)$,
		            				\item $Z_{0} = S^2$ with $\mu(Z_{0}) = \overline{(-2,0) ~(2,0)}$ and $\mathrm{Vol}(Z_{0}) = 4$,
	           	 				\item $Z_1 = \mathrm{pt}$ with $\mu(Z_1) = (-2,1)$,
	            					\item $Z_2 = S^2$ with $\mu(Z_2) = \overline{(-1,2) ~(0,2)})$ and $\mathrm{Vol}(Z_{2}) = 1$.
	            				\end{itemize}
			\vs{0.5cm}
			
	           	 \item {\bf (IV-2-1.2)} \cite[8th in Section 12.5]{IP} : Consider $X = \p^1 \times \p^1 \times \p^1$ equipped with $T^2$-action defined by 
	           	 \[
	           	 	(t_1, t_2) \cdot ([x_0 : x_1], [y_0 : y_1], [z_0 : z_1]) := ([t_1x_0 : x_1], [t_2y_0 : y_1], [t_2z_0 : z_1])
	           	 \]
	           	 with respect to the normalized monotone K\"{a}hler form on $X$, the moment map image is give in the middle of Figure \ref{figure_IV_2_1_2}. 
	           	 (Note that the red double line in the middle  indicates the image of the upper-left and lower-right red edges in the first of Figure \ref{figure_IV_2_1_2}.)
	           	 
	           	 Let $C$ be the $T$-invariant sphere given by 
	           	 \[
	           	 	C = \{ ([1:0], [y_0 : y_1],  [y_0 : y_1]) ~|~ [y_0:y_1] \in \p^1\}
			\]
	           	whose moment map image is indicated by the blue line in Figure \ref{figure_IV_2_1_2}. Then, let $M$ be the $T^2$-equivariant blow-up of $X$ whose moment map 
	           	is described in the third of Figure \ref{figure_IV_2_1_2}. The fixed point set consists of 
	            				\begin{itemize}
	            					\item $Z_{-2} = S^2$ with  $\mu(Z_{-2}) = \overline{(1,-2) ~(2,-2)}$ and $\mathrm{Vol}(Z_{-2}) = 1$,
		            				\item $Z_{-1} = \mathrm{pt}$ with $\mu(Z_{-1}) = (0,-1)$,
		            				\item $Z_{0} = S^2 ~\dot \cup~ S^2$ with $\mu(Z_{0}^1) =\mu(Z_{0}^2) = \overline{(0,0) ~(2,0)}$ and $\mathrm{Vol}(Z_{0}^1) = \mathrm{Vol}(Z_{0}^2) = 2$,
	           	 				\item $Z_1 = \mathrm{pt}$ with $\mu(Z_1) = (0,1)$,
	            					\item $Z_2 = S^2$ with $\mu(Z_2) = \overline{(1,2) ~(2,2)})$ and $\mathrm{Vol}(Z_{2}) = 1$.
	            				\end{itemize}
	           	 	\vs{0.3cm}
	           	 
	           	 	\begin{figure}[h]
	           	 		\scalebox{1}{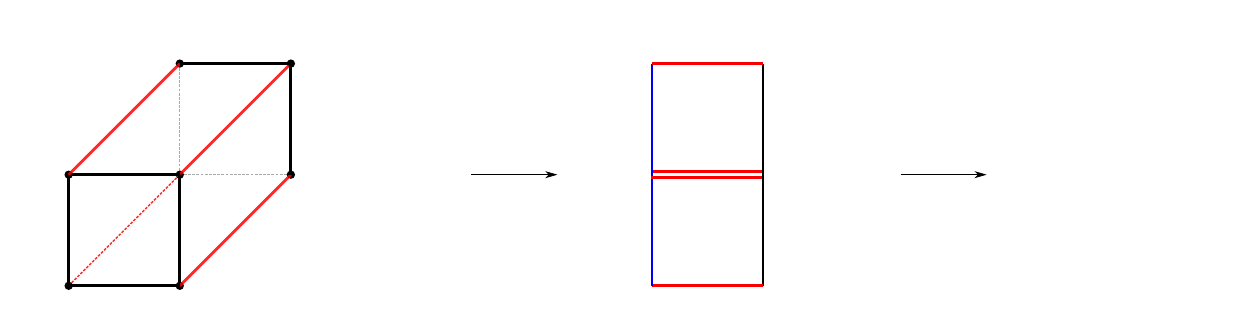}
	           	 		\caption{\label{figure_IV_2_1_2} Blow up of $\p^1 \times \p^1 \times \p^1$ along $C$}
	           	 	\end{figure}	           	 
	           	 
	           	 \item {\bf (IV-2-2.1)} \cite[24th in Section 12.4]{IP} : Consider the complete flag variety $\mcal{F}(3) \cong U(3) / T^3$ together with the induced $T^2$-action whose moment map 
	           	 image is given in the first of Figure \ref{figure_IV_2_2_1}. (See also Example \ref{example_II_1}.) Let $C$ be a $T$-invariant sphere (for instance, take a sphere whose moment map
	           	 image is $\overline{(0,0) ~(0,2)}$ as in Figure \ref{figure_IV_2_2_1}). Let $M$ be the $T^2$-equivariant blow-up of $\mcal{F}(3)$ along $C$. Then the moment map image 
	           	 for the induced $T^2$-action on $M$ can be depicted as in the second in Figure \ref{figure_IV_2_2_1}. 
		           The fixed point set consists of 
                           				\begin{itemize}
	            					\item $Z_{-2} = S^2$ with  $\mu(Z_{-2}) = \overline{(1,0) ~(2,0)}$ and $\mathrm{Vol}(Z_{-2}) = 1$,
		            				\item $Z_{-1} = \mathrm{pt}$ with $\mu(Z_{-1}) = (1,1)$,
		            				\item $Z_{0} = S^2$ with $\mu(Z_{0}) = \overline{(1,2) ~(4,2)}$ and $\mathrm{Vol}(Z_{0}) = 3$,
	           	 				\item $Z_1 = \mathrm{pt}$ with $\mu(Z_1) = (1,3)$,
	            					\item $Z_2 = S^2$ with $\mu(Z_2) = \overline{(2,4) ~(4,4)})$ and $\mathrm{Vol}(Z_{2}) = 2$.
	            				\end{itemize}
	           	 	           	 	\vs{0.3cm}
	           	 	           	 	
	           	 	\begin{figure}[H]
	           	 		\scalebox{1}{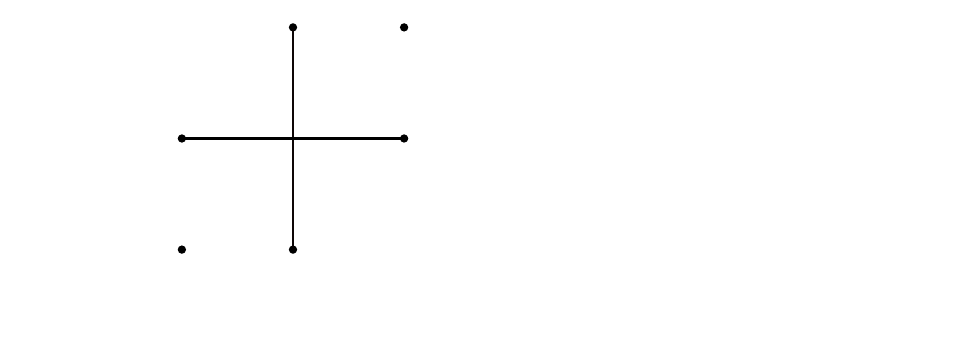}
	           	 		\caption{\label{figure_IV_2_2_1} Blow up of $\mcal{F}(3)$ along $C$}
	           	 	\end{figure}
	            
                          	\item {\bf (IV-2-2.2)} \cite[10th in Section 12.5]{IP} : Consider $\C P^1 \times ~X_2$ with the standard $T^3$-action, where $X_k$ is the $k$-times blow-up of $\p^2$. 
                          	The corresponding moment polytope is given in Figure \ref{figure_IV_2_2_2}. Take a circle subgroup of $T^3$ generated by $\xi = (-1,1,0)$. Then one can easily check
                          	that the $S^1$-action is semifree and the fixed point set consists of 
                           				\begin{itemize}
	            					\item $Z_{-2} = S^2$ with  $\mu(Z_{-2}) = \overline{(2,0,0) ~(2,0,1)}$ and $\mathrm{Vol}(Z_{-2}) = 1$,
		            				\item $Z_{-1} = \mathrm{pt}$ with $\mu(Z_{-1}) = (1,0,2)$,
		            				\item $Z_{0} = S^2$ with $\mu(Z_{0}) = \overline{(2,2,0) ~(2,2,1)}$ and $\mathrm{Vol}(Z_{0}) = 1$,
	           	 				\item $Z_1 = \mathrm{pt}$ with $\mu(Z_1) = (1,2,2)$,
	            					\item $Z_2 = S^2$ with $\mu(Z_2) = \overline{(0,2,0) ~(0,2,2)})$ and $\mathrm{Vol}(Z_{2}) = 2$.
	            				\end{itemize}
	           	 	           	 	\vs{0.3cm}
                          	
	           	 	\begin{figure}[H]
	           	 		\scalebox{1}{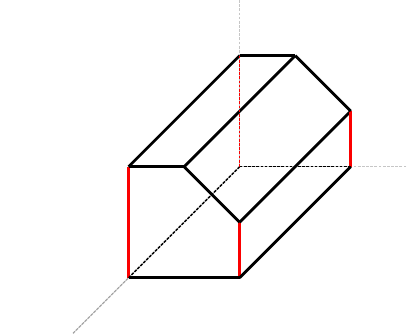}
	           	 		\caption{\label{figure_IV_2_2_2} $S^2 \times X_2$}
	           	 	\end{figure}
                          	                          	
	           	\item {\bf (IV-2-3)} \cite[26th in Section 12.4]{IP} : Consider $\p^3$ with the standard $T^3$-action and let $M$ be the $T^3$-equivariant blow-up of 
	           	$\p^3$ along a disjoint union of a fixed point and a $T^3$-invariant sphere. Then the moment map image of $M$ is described in Figure \ref{figure_IV_2_3}. 
	           	If we take a circle subgroup of $T^3$ generated by $\xi = (0,-1,-1)$, then the $S^1$-action becomes semifree and the fixed point set is give by 
                           				\begin{itemize}
	            					\item $Z_{-2} = S^2$ with  $\mu(Z_{-2}) = \overline{(0,2,2) ~(0,3,1)}$ and $\mathrm{Vol}(Z_{-2}) = 1$,
		            				\item $Z_{-1} = \mathrm{pt}$ with $\mu(Z_{-1}) = (0,3,0)$,
		            				\item $Z_{0} = S^2$ with $\mu(Z_{0}) = \overline{(0,0,2) ~(2,0,2)}$ and $\mathrm{Vol}(Z_{0}) = 2$,
	           	 				\item $Z_1 = \mathrm{pt}$ with $\mu(Z_1) = (3,0,1)$,
	            					\item $Z_2 = S^2$ with $\mu(Z_2) = \overline{(0,0,0) ~(3,0,0)})$ and $\mathrm{Vol}(Z_{2}) = 3$.
	            				\end{itemize}
	           	 	           	 	\vs{0.3cm}
	           	
	           	\textbf{(e.g. the blow-up of $\C P^3$ with center a disjoint union of a point and a line with $c_1^3(M) = 46$)}
	           	(with $\xi = (0,-1,-1)$.)
	           	 	\begin{figure}[H]
	           	 		\scalebox{0.8}{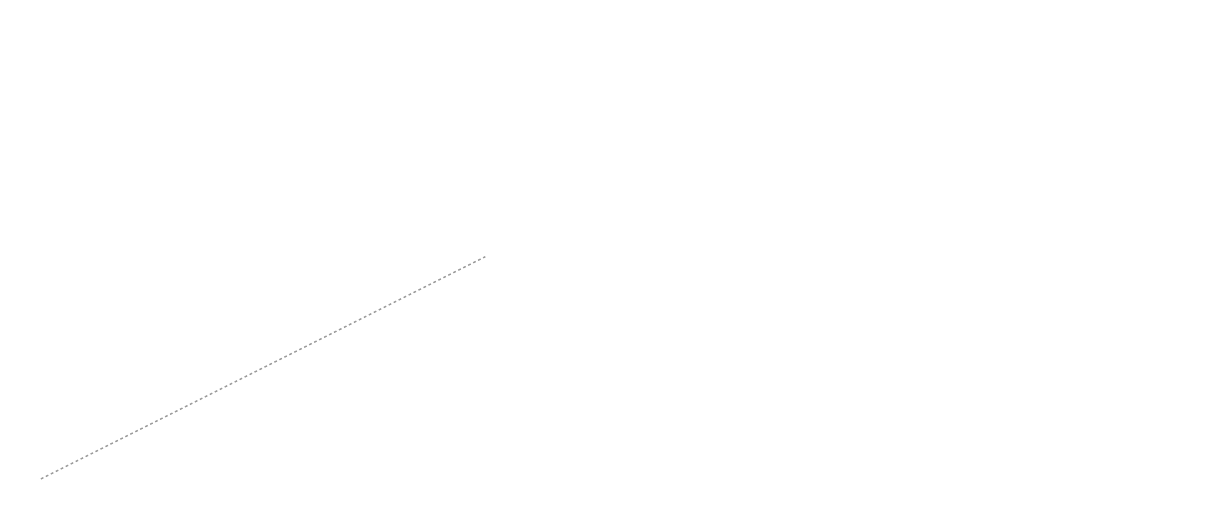}
	           	 		\caption{\label{figure_IV_2_3} Toric blow up of $\p^3$ along a fixed point and a $T^3$-invariant sphere}
	           	 	\end{figure}

	           	\item {\bf (IV-2-4)} \cite[29th in Section 12.4]{IP} : Consider $V_7$, the $T^3$-equivariant blow-up of $\p^3$ at a fixed point. (See also Example \ref{example_II_1}.) 
	           	Take $C$ be any $T^3$-invariant sphere lying on the exceptional divisor of the blow-up $V_7 \rightarrow \p^3$. Then the moment map image is given in 
	           	Figure \ref{figure_IV_2_4}. Take a circle subgroup generated by $\xi = (0,-1,-1)$. 
	           	 	\begin{figure}[H]
	           	 		\scalebox{0.7}{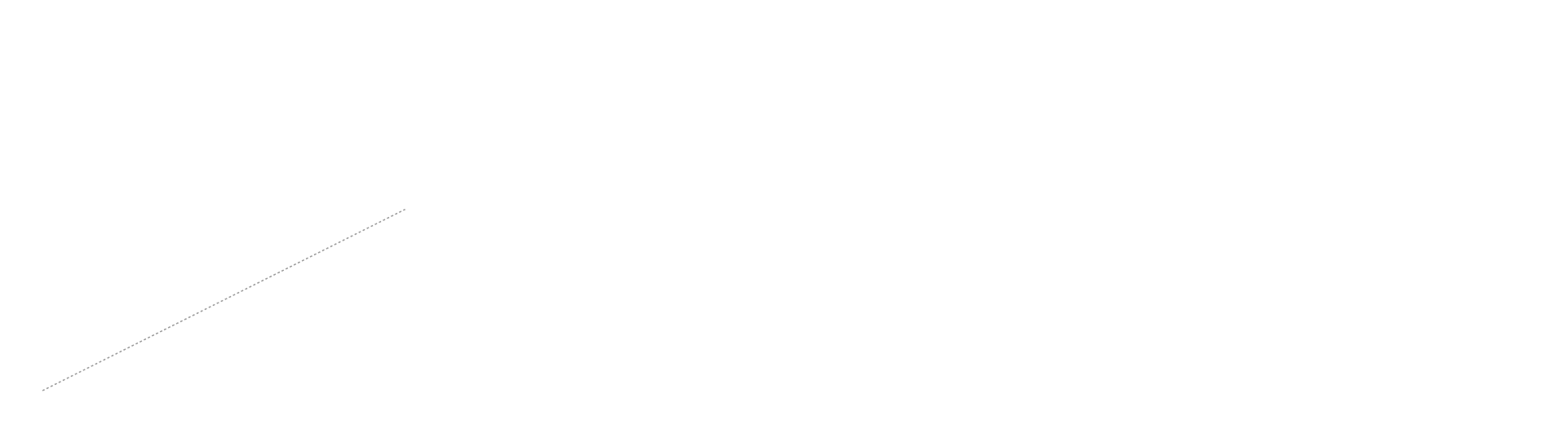}
	           	 		\caption{\label{figure_IV_2_4} Blow up of $V_7$ along a $T$-invariant sphere on the exceptional divisor}
	           	 	\end{figure}
	           	 	\noindent
			The $S^1$-action is semifree and the fixed point set consists of 
                           				\begin{itemize}
	            					\item $Z_{-2} = S^2$ with  $\mu(Z_{-2}) = \overline{(0,4,0) ~(0,3,1)}$ and $\mathrm{Vol}(Z_{-2}) = 1$,
		            				\item $Z_{-1} = \mathrm{pt}$ with $\mu(Z_{-1}) = (0,1,2)$,
		            				\item $Z_{0} = S^2$ with $\mu(Z_{0}) = \overline{(0,0,2) ~(1,0,2)}$ and $\mathrm{Vol}(Z_{0}) = 1$,
	           	 				\item $Z_1 = \mathrm{pt}$ with $\mu(Z_1) = (3,0,1)$,
	            					\item $Z_2 = S^2$ with $\mu(Z_2) = \overline{(0,0,0) ~(4,0,0)})$ and $\mathrm{Vol}(Z_{2}) = 4$.
	            				\end{itemize}
	           	 	           	 	\vs{0.3cm}
	           		           	
	           	\item {\bf (IV-2-5)} \cite[12th in Section 12.5]{IP} : We consider $Y$, the blow-up of $\p^3$ along a $T^3$-invariant line (see Example \ref{example_III}).
	           	Let $C_1$ and $C_2$ be two $T^3$-invariant disjoint lines lying on the exceptional divisor of $Y \rightarrow \p^3$. See Figure \ref{figure_IV_2_5} (a).
	           	Let $M$ be the $T^3$-equivariant blow-up of $Y$ along $C_1$ and $C_2$. Then the moment map image of the induced $T^3$-action is given by 
	           	Figure \ref{figure_IV_2_5}. 
	           	
	           	Take an $S^1$ subgroup of $T^3$ generated by $\xi = (1,0,1)$. One can easily check that the $S^1$-action is semifree and the fixed point set is given by
                           				\begin{itemize}
	            					\item $Z_{-2} = S^2$ with  $\mu(Z_{-2}) = \overline{(0,4,0) ~(0,2,0)}$ and $\mathrm{Vol}(Z_{-2}) = 2$,
		            				\item $Z_{-1} = \mathrm{pt}$ with $\mu(Z_{-1}) = (0,1,1)$,
		            				\item $Z_{0} = S^2 ~\dot \cup ~ S^2$ with 
		            				\[
		            					\mu(Z_{0}^1) = \overline{(0,1,2) ~(0,2,2)},  \quad \mu(Z_{0}^2) = \overline{(1,0,1) ~(2,0,0)}, 
		            					\quad \quad \mathrm{Vol}(Z_{0}^1) = \mathrm{Vol}(Z_{0}^2) = 1,
							\]
	           	 				\item $Z_1 = \mathrm{pt}$ with $\mu(Z_1) = (1,0,2)$,
	            					\item $Z_2 = S^2$ with $\mu(Z_2) = \overline{(2,0,2) ~(4,0,0)})$ and $\mathrm{Vol}(Z_{2}) = 2$.
	            				\end{itemize}
	           	 	           	 	\vs{0.3cm}
	           	
	           	 	\begin{figure}[H]
	           	 		\scalebox{0.7}{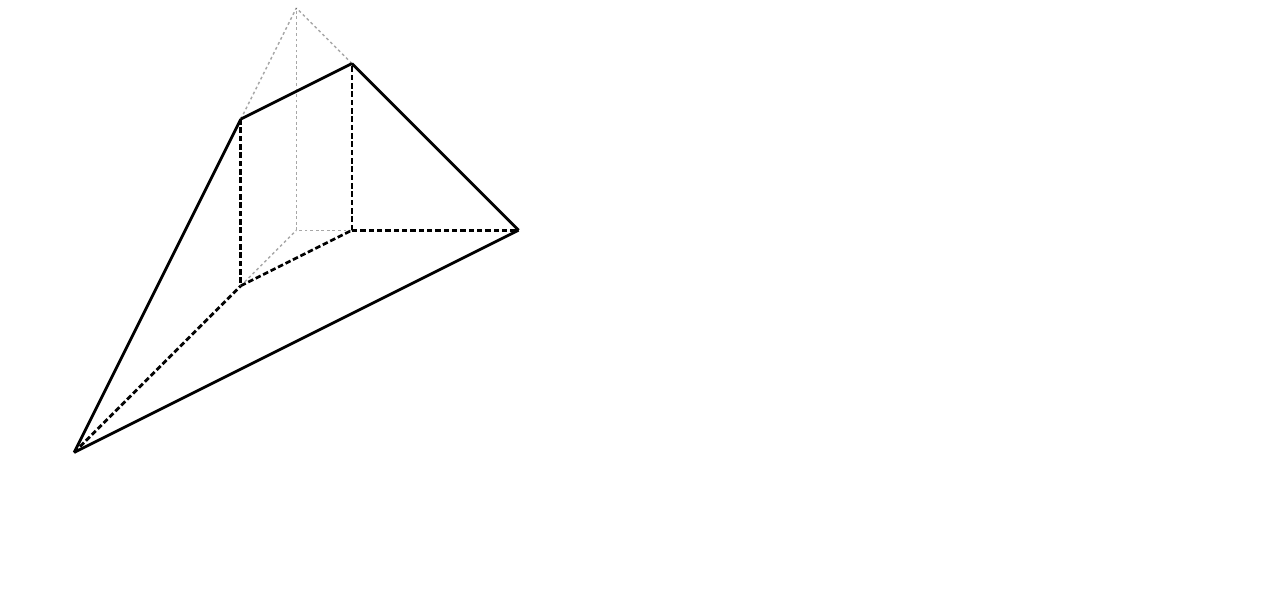}
	           	 		\caption{\label{figure_IV_2_5} Blow up of $Y$ along disjoint $T$-invariant two spheres on the exceptional divisor}
	           	 	\end{figure}

	           	\item {\bf (IV-2-6)} \cite[30th in Section 12.4]{IP} : Consider the $T^3$-equivariant blow-up $V_7$ of $\p^3$ at a fixed point and let $M$ be the blow-up of $V_7$
	           	along a $T^3$-invariant sphere passing through the exceptional divisor of $V_7 \rightarrow \p^3$. Then the moment map image of $M$ with respect to the induced 
	           	action is given by Figure \ref{figure_IV_2_6}. 

	           	 	\begin{figure}[H]
	           	 		\scalebox{0.7}{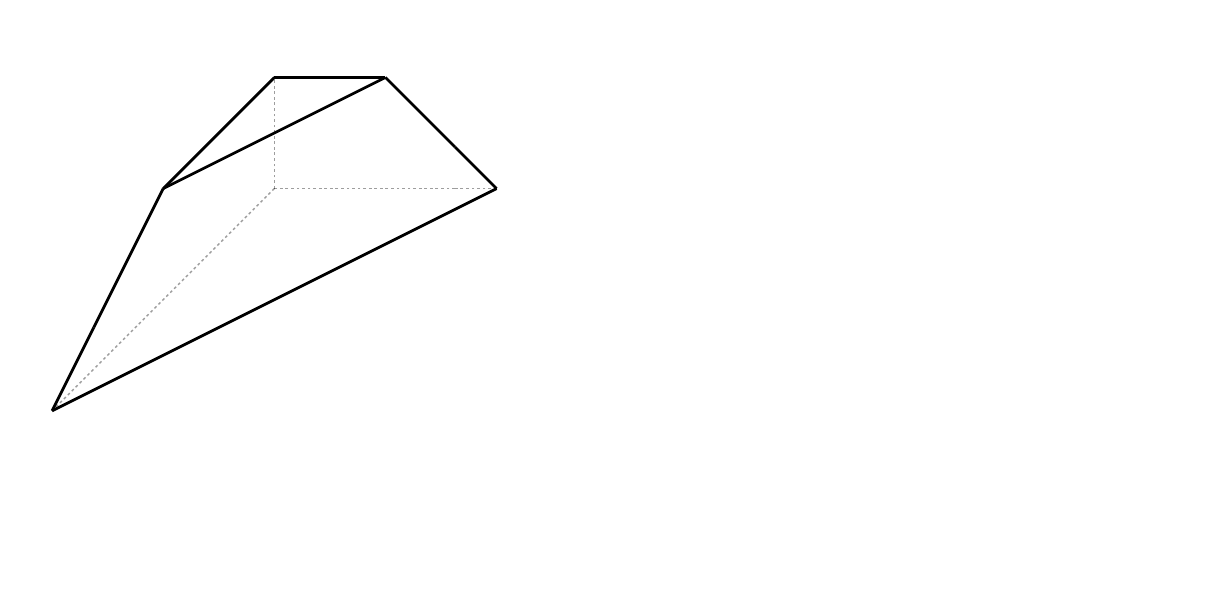}
	           	 		\caption{\label{figure_IV_2_6} Blow up of $V_7$ along a $T$-invariant sphere passing through the exceptional divisor}
	           	 	\end{figure}

			Take a circle subgroup of $T^3$ generated by $\xi = (-1,0,-1)$. Then the action is semifree and the fixed point set 
	           	consists of 
                           				\begin{itemize}
	            					\item $Z_{-2} = S^2$ with  $\mu(Z_{-2}) = \overline{(4,0,0) ~(2,0,2)}$ and $\mathrm{Vol}(Z_{-2}) = 2$,
		            				\item $Z_{-1} = \mathrm{pt}$ with $\mu(Z_{-1}) = (1,0,2)$,
		            				\item $Z_{0} = S^2$ with 
		            				$
		            					\mu(Z_{0}) = \overline{(0,1,2) ~(0,2,2)} 
							$ with $\mathrm{Vol}(Z_{0}^2) = 1$, 
	           	 				\item $Z_1 = \mathrm{pt}$ with $\mu(Z_1) = (1,0,0)$,
	            					\item $Z_2 = S^2$ with $\mu(Z_2) = \overline{(0,1,0) ~(0,4,0)})$ and $\mathrm{Vol}(Z_{2}) = 3$.
	            				\end{itemize}
	           	 	           	 	\vs{0.3cm}
	          \end{itemize}		
	\end{example}

\section{Main Theorem}
\label{secMainTheorem}

	In this section, we prove our main theorem \ref{theorem_main}.
	\begin{theorem}[Theorem \ref{theorem_main}]
		Let $(M,\omega)$ be a six-dimensional closed monotone symplectic manifold equipped with a semifree Hamiltonian 
		circle action. Suppose that the maximal and the minimal fixed component of the action are both 2-dimensional. 
		Then $(M,\omega)$ is $S^1$-equivariantly symplectomorphic to some 
		K\"{a}hler Fano manifold with a certain holomorphic Hamiltonian circle action. 
	\end{theorem}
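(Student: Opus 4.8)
The plan is to assemble the three steps outlined in Section~\ref{ssecOutlineOfProofOfTheoremRefTheoremmain} and then invoke Gonzalez's Theorem~\ref{theorem_Gonzalez_5}. First I would normalize so that $c_1(TM)=[\omega]$ and replace the given moment map by the balanced one $H$ of Definition~\ref{definition_balanced}; by Remark~\ref{remark_bminbmax} I may assume $b_{\min}\le b_{\max}$, the remaining situations being recovered by reversing the action. Since both extrema are two-dimensional, Lemma~\ref{lemma_possible_critical_values} forces $Z_{\max}\cong Z_{\min}\cong S^2$ at the levels $\pm 2$, and the only possible interior critical values lie in $\{-1,0,1\}$. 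Hence $\mathrm{Crit}\,\mathring H$ is one of $\emptyset,\{0\},\{-1,1\},\{-1,0,1\}$, and the classification Theorems~\ref{theorem_I_1}, \ref{theorem_II_1}, \ref{theorem_II_2}, \ref{theorem_III}, \ref{theorem_IV_1}, \ref{theorem_IV_2} exhaust every topological fixed point data, producing exactly the entries recorded in Table~\ref{table_list}. Reading off the reduced spaces from those tables, every $M_c$ is diffeomorphic to $\p^1\times\p^1$ or to $X_k$ with $k\le 4$ (a del Pezzo surface, consistently with Corollary~\ref{corollary_monotone_reduced_space}), and these are precisely the surfaces known to be symplectically rigid in the sense of Definition~\ref{definition_rigid}; I would quote this rigidity rather than reprove it.

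The substantive step, and the one I expect to be the main obstacle, is Step~2: upgrading each topological fixed point data to a genuine fixed point data, i.e.\ showing that the classes $[\omega_c]$ and $\mathrm{PD}(Z_c^i)$ determine $(M_c,\omega_c)$ together with its embedded fixed components up to symplectomorphism. Uniqueness of the reduced form $\omega_c$ in its cohomology class is exactly the first clause of symplectic rigidity, so the delicate point is that the homology class $\mathrm{PD}(Z_c^i)$ pins down the symplectic isotopy class of the submanifold $Z_c^i$ (and hence, via the symplectic neighbourhood theorem, its normal data as well). Each non-point fixed component in the tables is an embedded symplectic sphere whose class $A$ is either an exceptional class (Lemma~\ref{lemma_list_exceptional}) or satisfies $A\cdot A\ge 0$; for such classes in a rational surface the space of embedded symplectic spheres representing $A$ is connected, so any two representatives are carried into one another by a symplectomorphism isotopic to the identity. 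I would secure this by invoking the $J$-holomorphic curve results of McDuff on symplectic spheres in rational surfaces, and in the cases where $Z_0$ splits into two disjoint components (for instance {\bf (II-1.3)}, {\bf (II-2.1)}, {\bf (IV-1-1.1)}, {\bf (IV-1-1.2)}, {\bf (IV-2-1.2)}, {\bf (IV-2-2.2)}, {\bf (IV-2-5)}) I would use the corresponding statement for a disjoint configuration, the two classes having vanishing mutual intersection by construction, so that both spheres are normalized simultaneously. Isolated fixed points impose no constraint, since $\mathrm{Symp}_0(M_c,\omega_c)$ acts transitively on points of the connected surface $M_c$. This is where the argument is genuinely technical and where most of the effort would go.

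With Steps~1--3 in hand the conclusion is formal. Given $(M,\omega)$ as in the statement, its balanced moment map yields a topological fixed point data which, by Step~1, coincides with one of the entries of Table~\ref{table_list}; by Step~3 the Fano examples constructed in Examples~\ref{example_I_1}, \ref{example_II_1}, \ref{example_II_2}, \ref{example_III}, \ref{example_IV_1}, \ref{example_IV_2} exhibit a K\"ahler Fano manifold $(X,\omega_X,J)$ carrying a holomorphic semifree Hamiltonian $S^1$-action with the same topological fixed point data. By Step~2 the two manifolds then share the same fixed point data, and since every reduced space is symplectically rigid, Gonzalez's Theorem~\ref{theorem_Gonzalez_5} furnishes an $S^1$-equivariant symplectomorphism $(M,\omega)\cong(X,\omega_X)$, which is what we wanted. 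Collapsing each table entry with its orientation-reversed partner via Remark~\ref{remark_bminbmax} then accounts for the count of distinct types asserted in the introduction.
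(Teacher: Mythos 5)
Your proposal is correct and follows essentially the same route as the paper: the three-step outline (classify TFD via the case analysis on $\mathrm{Crit}\,\mathring{H}$, upgrade TFD to FD, match with the Fano examples) followed by Gonzalez's Theorem~\ref{theorem_Gonzalez_5}, with symplectic rigidity of $\p^1\times\p^1$ and $X_k$, $k\le 4$, supplied by Theorems~\ref{theorem_uniqueness} and \ref{theorem_symplectomorphism_group}. The only difference is cosmetic: where you invoke connectedness of the space of embedded symplectic spheres in a fixed class via McDuff-type $J$-holomorphic arguments, the paper routes the same Step~2 through Theorem~\ref{theorem_Z} (isotoping symplectic spheres of self-intersection $\ge -1$ to algebraic curves) combined with Lemma~\ref{lemma_isotopic} on homologous smooth curves in a projective surface with $H^1(X,\mcal{O}_X)=0$, and through \cite{ST} for the transitivity on isolated points.
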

	
	We list all possible topological fixed point data in Table \ref{table_list}. Notice that our classification implies that 
	any reduced space of $(M,\omega)$ in Theorem \ref{theorem_main} is either $\p^1 \times \p^1$, or $\p^2 \#~k~ \overline{\p^2}$ for $1 \leq k \leq 4$. 
	The following theorems then imply that those spaces 
	are symplectically rigid (in the sense of \cite[Definition 2.13]{McD2} or \cite[Definition 1.4]{G}). (See also Section \ref{secFixedPointData} or \cite[Section 5]{Cho}.)

	\begin{table}[h]
	\begin{adjustbox}{width=1\textwidth}
		\begin{tabular}{|c|c|c|c|c|c|c|c|c|c|}
			\hline
			         & $(M_0, [\omega_0])$           & $e(P_{-2+\epsilon})$ & $Z_{-2}$ & $Z_{-1}$ &  $Z_0$ & $Z_1$ & $Z_2$ & $b_2$ &  $c_1^3$ \\ \hline \hline
	          {\bf (I-1)}      & $(S^2 \times S^2, 2x + 2y)$ & $x-y$                         &  $S^2$    &               &              &            & $S^2$ & $1$ & $64$\\ \hline    
		{\bf (II-1.1)} & $(S^2 \times S^2, 2x + 2y)$ & $-y$                           &$S^2$ & &$Z_0 \cong S^2, ~\mathrm{PD}(Z_0) = x+y$ & & $S^2$ & $2$ &$48$ \\ \hline
		{\bf (II-1.2)} & $(S^2 \times S^2, 2x + 2y)$ & $-y$                           &$S^2$ & & $Z_0 \cong S^2, ~\mathrm{PD}(Z_0) = x$ & & $S^2$ & $2$ & $56$\\ \hline    
		{\bf (II-1.3)} & $(S^2 \times S^2, 2x + 2y)$ & $-y$                           &$S^2$ &  & \makecell{ $Z_0 = Z_0^1 ~\dot \cup ~ Z_0^2$ \\
						$Z_0^1 \cong Z_0^2 \cong S^2$ \\ $\mathrm{PD}(Z_0^1) = \mathrm{PD}(Z_0^2) = y$}   &  & $S^2$ & $3$ & $48$\\ \hline
		{\bf (II-2.1)} & $(E_{S^2}, 3x + 2y)$ & $-x -y$  &$S^2$ & &
				    	\makecell{ $Z_0 = Z_0^1 ~\dot \cup ~ Z_0^2$ \\
					    $Z_0^1 \cong Z_0^2 \cong S^2$ \\ $\mathrm{PD}(Z_0^1) = y$, $\mathrm{PD}(Z_0^2) = x+y$}  & & $S^2$ & $3$ & $48$\\ \hline    
		{\bf (II-2.2)} & $(E_{S^2}, 3x + 2y)$ & $-x-y$  &$S^2$ & & $Z_0 \cong S^2, ~\mathrm{PD}(Z_0) = 2x+2y$ &  & $S^2$ & $2$ &$40$ \\ \hline
		{\bf (III.1)} & \makecell{$(E_{S^2} \# ~\overline{\p^2},$ \\$3x + 2y - E_1)$} & $-y$  &$S^2$ & 
				    	{ pt} & &{pt} & $S^2$ & $2$ & $54$\\ \hline    
		{\bf (III.2)} & \makecell{$(S^2 \times S^2 \# ~2\overline{\p^2},$ \\ $2x + 2y - E_1 - E_2)$} & $-y$  &$S^2$ & 
				    	{2 pts} & &{2 pts}  & $S^2$ & $3$ & $44$\\ \hline    
		{\bf (III.3)} & \makecell{$(E_{S^2} \# ~\overline{\p^2},$ \\ $3x + 2y - E_1)$} & $-x-y$  &$S^2$ & {3 ~pts} & &{3 ~pts} & $S^2$ & $4$ &$34$ \\ \hline
		{\bf (IV-1-1.1)} & \makecell{$(E_{S^2} \# ~2\overline{\p^2},$ \\$3x + 2y - E_1-E_2)$} & $-x-y$  &$S^2$ & 
				    	{ 2 pts} &
				    		\makecell{ $Z_0 = Z_0^1 ~\dot \cup ~ Z_0^2$ \\ $Z_0^1 \cong Z_0^2 \cong S^2$ \\ 
				    		$\mathrm{PD}(Z_0^1) = x+y-E_1 - E_2$ \\ $\mathrm{PD}(Z_0^2) = x - E_1$}
					     & { 2 pts} & $S^2$ & $5$ & $36$\\ \hline    
		{\bf (IV-1-1.2)} & \makecell{$(E_{S^2} \# ~2\overline{\p^2},$ \\$3x + 2y - E_1-E_2)$} & $-x-y$  &$S^2$ & 
				    	{ 2 pts} &
				    		\makecell{ $Z_0 = Z_0^1 ~\dot \cup ~ Z_0^2$ \\ $Z_0^1 \cong Z_0^2 \cong S^2$ \\ $\mathrm{PD}(Z_0^1) = y$ \\ 
				    		$\mathrm{PD}(Z_0^2) = x+y-E_1 - E_2$}
					     & { 2 pts} & $S^2$ & $5$ & $36$\\ \hline    					     
		{\bf (IV-1-1.3)} & \makecell{$(E_{S^2} \# ~2\overline{\p^2},$ \\$3x + 2y - E_1-E_2)$} & $-x-y$  &$S^2$ & 
				    	{ 2 pts} &
				    		\makecell{ $Z_0 \cong S^2$  \\ $\mathrm{PD}(Z_0) = x+y-E_1$}
					     & { 2 pts} & $S^2$ & $4$ & $36$\\ \hline    
		{\bf (IV-1-2)} & \makecell{$(E_{S^2} \# ~2\overline{\p^2},$ \\$3x + 2y - E_1-E_2)$} & $-x-y$  &$S^2$ & 
				    	{ 2 pts} & 
				    		\makecell{ $Z_0 \cong S^2$  \\ $\mathrm{PD}(Z_0) = x - E_1$}
				    	& { 2 pts}  & $S^2$ & $4$ & $40$\\ \hline 
		{\bf (IV-2-1.1)} & \makecell{$(E_{S^2} \# ~\overline{\p^2},$ \\$3x + 2y - E_1)$} & $-x-y$  &$S^2$ & { pt} &
				    		\makecell{ $Z_0 \cong S^2$  \\ $\mathrm{PD}(Z_0) = 2x + y - E_1$}				    
				     &{ pt} & $S^2$ & $3$ &$38$ \\ \hline
		{\bf (IV-2-1.2)} & \makecell{$(E_{S^2} \# ~\overline{\p^2},$ \\$3x + 2y - E_1)$} & $-x-y$  &$S^2$ & { pt} &
				    		\makecell{ $Z_0 = Z_0^1 ~\dot \cup ~ Z_0^2$ \\ $Z_0^1 \cong Z_0^2 \cong S^2$ \\ 
				    		$\mathrm{PD}(Z_0^1) = \mathrm{PD}(Z_0^2) = x + y - E_1$}
				     &{ pt} & $S^2$ & $4$ &$38$ \\ \hline
		{\bf (IV-2-2.1)} & \makecell{$(E_{S^2} \# ~\overline{\p^2},$ \\$3x + 2y - E_1)$} & $-x-y$  &$S^2$ & { pt} & 
				    		\makecell{ $Z_0 \cong S^2$  \\ $\mathrm{PD}(Z_0) = x + y$}						    
				    &{ pt} & $S^2$ & $3$ &$42$ \\ \hline
		{\bf (IV-2-2.2)} & \makecell{$(E_{S^2} \# ~\overline{\p^2},$ \\$3x + 2y - E_1)$} & $-x-y$  &$S^2$ & { pt} & 
				    		\makecell{ $Z_0 = Z_0^1 ~\dot \cup ~ Z_0^2$ \\ $Z_0^1 \cong Z_0^2 \cong S^2$ \\ 
				    		$\mathrm{PD}(Z_0^1) = y$ \\ $\mathrm{PD}(Z_0^2)= x + y - E_1$}				    				    
				    &{ pt} & $S^2$ & $4$ &$42$ \\ \hline				    
		{\bf (IV-2-3)} & \makecell{$(E_{S^2} \# ~\overline{\p^2},$ \\$3x + 2y - E_1)$} & $-x-y$  &$S^2$ & { pt} &
				    		\makecell{ $Z_0 \cong S^2$  \\ $\mathrm{PD}(Z_0) = x$}				    
				     &{ pt} & $S^2$ & $3$ &$46$ \\ \hline
		{\bf (IV-2-4)} & \makecell{$(E_{S^2} \# ~\overline{\p^2},$ \\$3x + 2y - E_1)$} & $-x-y$  &$S^2$ & { pt} &
				    		\makecell{ $Z_0 \cong S^2$ 
				    		 \\ $\mathrm{PD}(Z_0) = E_1$}			    
				     &{ pt} & $S^2$ & $3$ &$50$ \\ \hline
		{\bf (IV-2-5)} & \makecell{$(S^2 \times S^2  \# ~\overline{\p^2},$ \\$2x + 2y - E_1)$} & $-y$  &$S^2$ & { pt} & 
				    		\makecell{ $Z_0 = Z_0^1 \dot \cup Z_0^2$ \\ $Z_0^1 \cong Z_0^2 \cong S^2$ \\ 
				    		$\mathrm{PD}(Z_0^1) = x - E_1$ \\  $\mathrm{PD}(Z_0^2) = y - E_1$  \\ }				    
				    &{ pt} & $S^2$ & $4$ &$46$ \\ \hline
		{\bf (IV-2-6)} & \makecell{$(S^2 \times S^2  \# ~\overline{\p^2},$ \\$2x + 2y - E_1)$} & $-y$  &$S^2$ & { pt} &
				    		\makecell{ $Z_0 \cong S^2$  \\ $\mathrm{PD}(Z_0) = x - E_1$}					    
				     &{ pt} & $S^2$ & $3$ &$50$ \\ \hline
		\end{tabular}
		\end{adjustbox}
		\vs{0.1cm}
		\caption {List of topological fixed point data} \label{table_list} 
	\end{table}

	\begin{theorem}\cite[Theorem 1.2]{McD4}\label{theorem_uniqueness}
		Let $M$ be the blow-up of a rational or a ruled symplectic four manifold. Then any two cohomologous and deformation equivalent\footnote{Two symplectic forms $\omega_0$ and $\omega_1$
		are said to be {\em deformation equivalent} if there exists a family of symplectic forms $\{ \omega_t  ~|~  0 \leq t \leq 1 \}$ connecting $\omega_0$ and $\omega_1$. We also say that 
		$\omega_0$ and $\omega_1$ are {\em isotopic} if such a family can be chosen such that $[\omega_t]$ is a constant path in $H^2(M; \Z)$.}
		symplectic forms on $M$ are isotopic.
	\end{theorem}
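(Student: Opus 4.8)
The plan is to prove the statement by converting a given deformation into an isotopy, the engine being the Gromov--Taubes theory of pseudoholomorphic curves on rational and ruled $4$-manifolds together with the Lalonde--McDuff inflation technique. By Moser's theorem, two symplectic forms lying in a common cohomology class $a$ are isotopic precisely when they lie in the same path-component of the space $\mathcal{S}_a$ of symplectic forms representing $a$; since $\omega_0$ and $\omega_1$ are deformation equivalent they already lie in one component of the larger space $\mathcal{S}$ of all symplectic forms in the ambient deformation class. First I would fix a deformation path $\{\omega_t\}_{t\in[0,1]}$ with $[\omega_0]=[\omega_1]=a$ and a generic family $\{J_t\}$ of $\omega_t$-tame almost complex structures, and then correct the (a priori nonconstant) cohomology class $[\omega_t]$ back to the constant class $a$ by inflating along suitable $J_t$-holomorphic curves.

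The second ingredient is a supply of embedded pseudoholomorphic curves in distinguished classes. On a minimal ruled surface the fiber class $F$ (with $F\cdot F=0$) is represented, for every tame $J$, by an embedded sphere, and these sweep out a ruling; on $\p^2$ the line class and on $S^2\times S^2$ the two fiber classes play the analogous role. After blowing up, each exceptional class $E_i$ (with $E_i^2=-1$ and $\langle c_1, E_i\rangle = 1$) is likewise represented by an embedded $J$-holomorphic sphere for every tame $J$. These existence statements are open conditions that persist along the generic path $\{J_t\}$, with degenerations (bubbling) at isolated times controlled by positivity of intersections and by the dimension counts special to real dimension four. The key analytic fact is the inflation lemma: if $Z$ is a $J$-holomorphic curve with $[Z]^2\geq 0$, then $\omega$ may be deformed through symplectic forms in the classes $[\omega]+s\,\mathrm{PD}[Z]$, $s\geq 0$, keeping $J$ tamed and leaving the form unchanged outside a neighborhood of $Z$.

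With these in hand I would argue in two stages. First, blow down all the exceptional spheres simultaneously in the family --- using that each $E_i$ is represented by an embedded symplectic sphere for every member of the path --- to reduce to the minimal rational or ruled case, where the symplectic cone and the connectivity of $\mathcal{S}_a$ are known explicitly. In the minimal case one inflates along a fixed nonnegative class (the fiber or line class) and chooses parameters $s_t\geq 0$ making $[\omega_t]+s_t\,\mathrm{PD}[Z]$ constant; this produces a path of symplectic forms in a single (possibly inflated) class, which after a matching deflation at the endpoints becomes an honest isotopy between $\omega_0$ and $\omega_1$. Finally one blows the exceptional spheres back up consistently across the family to recover an isotopy on $M$ itself.

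The main obstacle is the parametric control. The compatible curves can bubble at isolated times $t$, the relevant moduli spaces may fail to be transversally cut out, and the inflation and blow-down/blow-up operations must be carried out smoothly in $t$ so that the corrections assemble into a single smooth isotopy rather than a piecewise one. Concretely, the difficulty is to choose the inflation parameters $s_t$ continuously, to perform the family blow-down/blow-up coherently, and to verify that the target class $a$ stays in the open symplectic cone throughout the deformation. These are exactly the four-dimensional inputs --- positivity of intersections, automatic transversality for embedded spheres of the appropriate index, and Gromov compactness governing the bubbling --- that restrict the argument to rational and ruled surfaces and that would fail in higher dimensions.
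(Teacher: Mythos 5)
Your proposal cannot be compared against an in-paper proof, because the paper offers none: Theorem \ref{theorem_uniqueness} is imported verbatim from McDuff \cite{McD4} and used as a black box in Section \ref{secMainTheorem} to establish symplectic rigidity of the reduced spaces. Measured against McDuff's original argument, your outline follows the same route in broad strokes: Moser's theorem reduces the problem to connecting $\omega_0$ and $\omega_1$ within the space of symplectic forms in the fixed class $a$; one takes the deformation $\{\omega_t\}$ with a generic taming family $\{J_t\}$ and corrects the wandering cohomology class by Lalonde--McDuff inflation along $J_t$-holomorphic curves, whose existence on rational and ruled surfaces is supplied by the Gromov--Taubes theory (Taubes' SW $\Rightarrow$ Gr together with the four-dimensional structure results). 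That is indeed the engine of \cite{McD4}.

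That said, two steps in your plan, as written, would fail. First, the claim that each exceptional class $E_i$ (and likewise the fiber class) is represented by an \emph{embedded} $J$-holomorphic sphere for \emph{every} tame $J$ is false: nonvanishing of the Gromov invariant yields embedded representatives only for generic $J$, and at isolated times $t$ along the path the representative degenerates into a nodal configuration. Handling those isolated times is not a peripheral technicality; it is the actual content of McDuff's proof. Second, and consequently, your reduction to the minimal case by blowing down \emph{in the family} does not go through: because of precisely these degenerations, the exceptional spheres need not fit together into a smooth family over $t$, so there is no coherent family blow-down followed by a family blow-up. McDuff instead stays on the non-minimal manifold and inflates there, along embedded curves in carefully chosen classes of nonnegative square (obtained from Taubes-type existence results and the known structure of the symplectic cone, as in Li--Liu), choosing inflation parameters piecewise in $t$ and then smoothing the resulting concatenation. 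Your closing paragraph correctly names the remaining difficulties --- parametric transversality, bubbling at isolated times, continuous choice of $s_t$, and keeping the target class inside the symplectic cone --- but the proposal does not resolve any of them, and together they constitute the substance of the theorem rather than routine verification.
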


	\begin{theorem}\cite[Lemma 4.2]{G}\label{theorem_symplectomorphism_group}
	For any of the following symplectic manifolds, the group of symplectomorphisms  which act trivially on homology is path-connected. 
		\begin{itemize}
			\item $\p^2$ with the Fubini-Study form. \cite[Remark in p.311]{Gr}
			\item $\p^1 \times \p^1$ with any symplectic form. \cite[Theorem 1.1]{AM}
			\item $\p^2 \# ~k~\overline{\p^2}$ with any blow-up symplectic form for $k \leq 4$. \cite[Theorem 1.4]{AM}, \cite{E}, \cite{LaP}, \cite{Pin} \cite{LLW}. 
		\end{itemize}		
	\end{theorem}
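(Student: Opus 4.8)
The plan is to treat the three families through a single mechanism: Gromov's analysis of the action of the symplectomorphism group on the contractible space $\mathcal{J}_\omega$ of $\omega$-compatible almost complex structures, extracting path-connectedness of the homology-trivial subgroup from fibration sequences built out of moduli of embedded pseudoholomorphic spheres. Throughout I write $G := \text{Symp}_h(B,\omega)$ for the group of symplectomorphisms acting trivially on $H_*(B;\Z)$ (this is exactly the third bullet of Definition \ref{definition_rigid}), and the goal is to show $\pi_0(G) = 0$ in each case.

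For $\p^2$ the statement reduces to Gromov's original theorem \cite{Gr}. Since $H_2(\p^2;\Z) \cong \Z$ is generated by the line class and every symplectomorphism fixes $[\omega]$ together with the intersection form, the homological action is automatically trivial, so $G = \text{Symp}(\p^2,\omega_{\mathrm{FS}})$. Evaluating at the unique $J$-holomorphic line through a generic ordered pair of points exhibits this group as deformation retracting onto $PU(3)$; in particular it is connected, giving $\pi_0(G)=0$. For $\p^1\times\p^1$ I would follow Abreu--McDuff \cite{AM}. Fixing the two ruling classes $A,B$, a generic compatible $J$ makes each of $A,B$ representable by a unique embedded sphere through each point, so $B$ carries two transverse $J$-holomorphic foliations. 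An element of $G$ preserves $A$ and $B$ individually, hence maps these foliations to foliations for the pushed-forward structure; composing with a path in $\mathcal{J}_\omega$ reduces the connectivity of $G$ to that of a group of fibration-preserving maps whose components are controlled by the connected groups $\text{Diff}^+(S^2)$ and $\text{Symp}(S^2)$. The result is path-connectedness of $G$ for every symplectic form, the factor-interchanging involution in the monotone case being excluded precisely because it swaps $A$ and $B$ and is therefore not homology-trivial.

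For $\p^2 \# k\,\overline{\p^2}$ with $k \le 4$ the same philosophy applies but the bookkeeping is heavier, and this is where the main difficulty lies. One stratifies $\mathcal{J}_\omega$ according to which exceptional classes $E_i$ (and their partners in Lemma \ref{lemma_list_exceptional}) are represented by embedded $J$-spheres, and one uses the inflation technique to deform symplectic forms within a fixed cohomology class while keeping control of these curves. The homology-trivial hypothesis pins down each exceptional class individually, killing the combinatorial (Weyl-group) ambiguity in the set of $E_i$ that would otherwise manufacture extra components. For small $k$ the set of exceptional classes is finite and the corresponding moduli spaces are connected of the expected dimension, so one assembles a finite tower of blow-down fibrations, removing one exceptional sphere at a time to descend to $\p^2$ or $\p^1\times\p^1$ whose connectivity was just established, and reads off $\pi_0(G)=0$ from the resulting long exact homotopy sequences.

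The genuinely hard part is guaranteeing that the relevant curve moduli remain connected and that each blow-down fibration has connected fiber as $k$ climbs toward $4$; this is the delicate enumerative/transversality input secured case-by-case by Abreu--McDuff \cite{AM}, Evans \cite{E}, Lalonde--Pinsonnault \cite{LaP}, Pinsonnault \cite{Pin}, and Li--Li--Wu \cite{LLW}. Beyond $k=4$ the number of exceptional classes and the dimension of the deformation space outgrow the range in which these connectivity statements are available, which is exactly why the theorem (and hence the list of reduced spaces appearing in Table \ref{table_list}) is restricted to $k \le 4$.
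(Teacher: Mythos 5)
Your outline is accurate and matches the paper's treatment: the paper gives no proof of this statement, quoting it directly from Gonzalez's Lemma 4.2 together with exactly the sources you invoke (Gromov for $\p^2$, Abreu--McDuff for $\p^1\times\p^1$, and Evans, Lalonde--Pinsonnault, Pinsonnault, Li--Li--Wu for $\p^2\#\,k\,\overline{\p^2}$ with $k\le 4$), and your sketch of their methods --- automatic homological triviality on $\p^2$ with Gromov's retraction to $PU(3)$, the two transverse $J$-holomorphic foliations on $\p^1\times\p^1$ with the factor swap excluded by homology-triviality, and the stratification of $\mathcal{J}_\omega$ by configurations of exceptional spheres with inflation and blow-down fibrations for the blow-ups --- correctly identifies where the genuine work lies while deferring those connectivity and transversality inputs to the same references. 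Since both your write-up and the paper ultimately rest on citing this case-by-case literature rather than reproving it, your proposal is consistent with the paper's approach.
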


	\begin{remark}
		In \cite[Theorem 9.3]{Cho}, the author only mentioned the symplectic rigidity of $X_k = \p^2 \# k \overline{\p^2}$ for $k \leq 3$ since $X_k$ ($k > 3$) does not appear as a reduced space
		when an extremal fixed point set is an isolated point. On the other hand, in our case of Theorem \ref{theorem_main}, $X_4$ appears as a reduced space, see {\bf (III.3)}.  
		Recently, Li-Li-Wu proved the symplectic rigidity of $X_4$ in \cite{LLW} (where it fails from $k=5$, see \cite{Se}).
	\end{remark}
	
	To complete the proof of Theorem \ref{theorem_main}, we only need to show that each TFD determines FD uniquely. (Then the proof follows by Gonzalez theorem \ref{theorem_Gonzalez}
	from the fact that every reduced space is symplectically rigid and the existence of a Fano variety corresponding to each TFD as illustrated from Section 
	\ref{secCaseIMathrmCritMathringHEmptyset} to \ref{secCaseIVMathrmCritMathringH11}.)
	Note that a topological fixed point data only records homology classes of fixed components regarded as embedded submanifolds of reduced spaces. 
	In general, we cannot rule out the possibility that there are many distinct
	fixed point data which have the same topological fixed point data. 
	
	Recall that any non-extremal part of a topological fixed point data in Table \ref{table_list} is one of the forms
	\[
		(M_c, [\omega_c], [Z_c^1], \cdots, [Z_c^{k_c}]), \quad c = -1, 0, 1. 
	\]
	If $c = \pm 1$, then all $Z_c^i$'s are isolated points. In this case, the topological fixed point data determines a fixed point data uniquely, since if 
	\[
		(M_c, \omega_c, p_1, \cdots, p_r) \quad \text{and} \quad (M_c, \omega_c' , q_1, \cdots, q_r), \quad \quad p_i, q_j : \text{points},\quad  [\omega_c] = [\omega_c'],
	\]
	then it follows from the symplectic rigidity of $M_c$ (obtained by Theorem \ref{theorem_uniqueness} and Theorem \ref{theorem_symplectomorphism_group})
	that there exists a symplectomorphism $\phi : (M_c, \omega_c) \rightarrow (M_c, \omega_c')$ sending $p_i$ to $q_i$ for $i=1,\cdots,r$. 
	(See \cite[Proposition 0.3]{ST}.)  
	
	For $c= 0$, 
	we note that every $Z_0^i$ in Table \ref{table_list} is a sphere with self intersection greater than equal to $-1$. 
	Then the following theorems guarantee that any symplectic embedding $Z_0 \hookrightarrow M_0$ in Table \ref{table_list}
	can be identified with an algebraic embedding. 
				
	\begin{theorem}\cite[Proposition 3.2]{LW}\cite[Theorem 6.9]{Z}\label{theorem_Z}
		Any symplectic sphere $S$ with self-intersection $[S]\cdot[S] \geq 0$ in a symplectic four manifold $(M,\omega)$ is symplectically isotopic to an (algebraic) rational curve.
		Any two homologous spheres with self-intersection $-1$ are symplectically isotopic to each other.
	\end{theorem}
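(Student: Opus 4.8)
The plan is to prove both statements by the standard parametric (cobordism) argument for embedded $J$-holomorphic spheres in a symplectic four manifold, exploiting automatic regularity together with positivity of intersections. Write $A = [S] \in H_2(M;\Z)$ and let $\mathcal{J}(M,\omega)$ denote the (contractible) space of $\omega$-compatible almost complex structures. First I would realize $S$ as a pseudoholomorphic curve: since $S$ is a symplectic submanifold, one can choose $J_0 \in \mathcal{J}(M,\omega)$ for which $S$ is $J_0$-holomorphic, by extending an $\omega$-compatible complex structure on the normal bundle $\nu_S$ to all of $M$. By the adjunction formula, $c_1(A) = A\cdot A + 2 - 2g = A \cdot A + 2$, which is $\geq 2$ in the first statement and $=1$ in the second; in particular $c_1(A) > 0$ in both cases.

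The crucial simplification is \emph{automatic regularity}: by the theorem of Hofer--Lizan--Sikorav in real dimension four, an embedded $J$-holomorphic sphere $C$ with $c_1([C]) \geq 1$ is automatically regular (the linearized Cauchy--Riemann operator is surjective), \emph{without} any genericity hypothesis on $J$. Consequently, for every $J \in \mathcal{J}(M,\omega)$ the moduli space $\mathcal{M}_A(J)$ of embedded $J$-holomorphic spheres in class $A$ is a smooth manifold of the expected dimension, the universal space $\mathcal{M}_A = \{(J,C) : C \in \mathcal{M}_A(J)\}$ is smooth, and the projection $\mathrm{pr} : \mathcal{M}_A \to \mathcal{J}(M,\omega)$ is a submersion. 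I would then choose a path $J_t$, $t \in [0,1]$, joining $J_0$ to a target $J_1$: for the first statement $J_1$ is taken integrable, so that $A$ is represented by an honest algebraic rational curve; for the second statement $J_1$ is taken so that the second given $(-1)$-sphere is $J_1$-holomorphic. Lifting this path through $\mathrm{pr}$ produces a family $C_t$ of embedded $J_t$-holomorphic (hence symplectic) spheres with $C_0 = S$, and this family is precisely the desired symplectic isotopy.

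The heart of the matter, and the step I expect to be the main obstacle, is showing that the lift exists for all $t$, i.e.\ that $\mathrm{pr}$ is proper along the path and that the representative stays embedded. This is where \emph{Gromov compactness} combined with positivity of intersections and the adjunction inequality is essential. A sequence $C_{t_n}$ can only degenerate to a nodal stable map whose components carry classes $A_1, \dots, A_m$ with $\sum_i A_i = A$ and positive symplectic area; positivity of intersections forces $A_i \cdot A_j \geq 0$ for $i \neq j$, while adjunction bounds each $c_1(A_i)$ by $A_i \cdot A_i + 2$. For the exceptional class of the second statement ($A\cdot A = -1$, $c_1(A)=1$) these constraints are incompatible with any nontrivial splitting, so no bubbling occurs, the unique $(-1)$-representative varies smoothly in $t$, and uniqueness identifies the two given spheres up to isotopy. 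For the non-negative self-intersection case I would argue analogously, ruling out every splitting allowed by the genus and area constraints (passing to a minimal model if necessary so that $A$ admits no decomposition into classes with the forbidden intersection pattern); then the embedded sphere persists, and at $t=1$ the curve $C_1$ is holomorphic for the integrable $J_1$, hence an algebraic rational curve.

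Finally I would package both cases uniformly: connectedness of the fibres of $\mathrm{pr}$, which follows from automatic regularity once degenerations are excluded, shows that any two embedded $J$-holomorphic spheres in class $A$ are isotopic through such spheres. This yields simultaneously the isotopy of $S$ to an algebraic curve and the isotopy between two homologous $(-1)$-spheres. The entire technical burden thus concentrates in the compactness/no-bubbling analysis; once bubbling is excluded, the regularity theory makes the remaining moduli-theoretic argument essentially formal.
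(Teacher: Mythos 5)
Before comparing: the paper contains no proof of Theorem \ref{theorem_Z} at all --- it is imported verbatim from \cite[Proposition 3.2]{LW} and \cite[Theorem 6.9]{Z} --- so your proposal has to be measured against those proofs rather than anything internal to this paper. Your first two steps are fine: a symplectic sphere can be made $J_0$-holomorphic for some compatible $J_0$, and adjunction gives $c_1(A)=A\cdot A+2\geq 1$, so Hofer--Lizan--Sikorav automatic regularity applies for \emph{every} $J$. The genuine gap is the compactness step, which you correctly identify as ``the heart of the matter'' but then dispose of incorrectly. For $A\cdot A\geq 0$ it is simply false that positivity of intersections and adjunction rule out all splittings: already in $\p^2$ (which is minimal, so ``passing to a minimal model'' buys nothing) the conic class $2L$ splits as $L+L$ with $L\cdot L=1\geq 0$ and $c_1(L)=3\leq L\cdot L+2$, and a family of embedded conics really can degenerate to two lines along a path of almost complex structures. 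Hence $\mathrm{pr}$ is not proper along a general path, and the actual content of Zhang's Theorem 6.9 (and of Shevchishin and Siebert--Tian on the genus-zero isotopy problem) is precisely how to continue the isotopy \emph{through} such nodal limits --- via the structure of $J$-holomorphic subvarieties for non-generic tamed $J$, smoothing of nodal representatives, and induction --- none of which is ``essentially formal''. The same defect appears in your $(-1)$-sphere case on a non-minimal manifold: along a generic path $J_t$ a square $-2$, $c_1=0$ class such as $E_1-E_2$ (index $-2$) becomes holomorphic at isolated times, producing degenerations $E_1=(E_1-E_2)+E_2$ that satisfy all of your numerical constraints, so ``no bubbling occurs'' is not true as stated; Li--Wu must cross these walls using more than adjunction (their argument rests on Taubes-style existence/uniqueness for exceptional classes and inflation, not naive path lifting).

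Two further points. Your closing claim that ``connectedness of the fibres of $\mathrm{pr}$ follows from automatic regularity once degenerations are excluded'' is unjustified: for an exceptional class, uniqueness of the $J$-curve for fixed $J$ does follow from positivity of intersections (two distinct irreducible representatives would force $E\cdot E\geq 0$, contradicting $E\cdot E=-1$), but for $A\cdot A\geq 0$ the connectedness of the space of embedded $J$-spheres in class $A$ for fixed generic $J$ is a Severi-type irreducibility statement that you never establish, and it is part of what \cite{Z} proves. Finally, choosing $J_1$ integrable so that $C_1$ is an \emph{algebraic} rational curve tacitly assumes $(M,\omega)$ is a rational surface; this is legitimate here (a sphere of nonnegative square forces $M$ rational or ruled by McDuff, and in the paper's application the reduced spaces are del Pezzo surfaces), but on an arbitrary symplectic four-manifold the phrase ``algebraic rational curve'' has no meaning, so this reduction must be stated, not elided.
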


	Furthermore, we may apply the following lemma to each reduced space since every rational surface $X$ satisfies $H^1(X, \mcal{O}_X) = 0$.
	
	\begin{lemma}\label{lemma_isotopic}\cite[Lemma 9.6]{Cho}
		Suppose that $X$ is a smooth projective surface with $H^1(X, \mcal{O}_X) = 0$. Let $H_1$ and $H_2$ be two smooth curves of $X$ representing the same homology class.
		Then $H_1$ is symplectically isotopic to $H_2$ with respect to the symplectic form $\omega_X = \omega_{\mathrm{FS}}|_X$ on $X$.
	\end{lemma}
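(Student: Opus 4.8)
The plan is to reduce the statement to a connectivity fact about a complete linear system. First I would exploit the hypothesis $H^1(X, \mcal{O}_X) = 0$ via the exponential sheaf sequence $0 \to \Z \to \mcal{O}_X \to \mcal{O}_X^* \to 0$, whose long exact cohomology sequence contains the segment
\[
	H^1(X, \mcal{O}_X) \longrightarrow \mathrm{Pic}(X) = H^1(X, \mcal{O}_X^*) \stackrel{c_1}{\longrightarrow} H^2(X; \Z).
\]
The vanishing $H^1(X, \mcal{O}_X) = 0$ forces the cycle class map $c_1$ to be injective, so that $\mathrm{Pic}(X) = \mathrm{NS}(X)$ and $\mathrm{Pic}^0(X) = 0$. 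Consequently the divisors $H_1$ and $H_2$, being homologous, give isomorphic line bundles $\mcal{O}_X(H_1) \cong \mcal{O}_X(H_2) =: L$; equivalently $H_1$ and $H_2$ are \emph{linearly} equivalent, and both appear as members of the complete linear system $|L| = \pp(H^0(X, L)) \cong \p^N$.

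Next I would connect the two members inside $|L|$ through smooth curves. The locus $\Delta \subset |L|$ of singular members is cut out by a discriminant-type condition on the coefficients, hence is a Zariski-closed algebraic subset, and it is \emph{proper} because $H_1$ is already a smooth member. Thus $\Delta$ has complex codimension at least one, i.e. real codimension at least two, so its complement $|L| \setminus \Delta$ is path-connected. Since $[H_1], [H_2] \in |L| \setminus \Delta$ by the smoothness hypothesis, I may choose a smooth path $t \mapsto [C_t]$ in $|L| \setminus \Delta$ with $C_0 = H_1$ and $C_1 = H_2$. This yields a smooth one-parameter family $\{ C_t \}_{t \in [0,1]}$ of smooth complex curves; because $\omega_X = \omega_{\mathrm{FS}}|_X$ is K\"{a}hler, each $C_t$ is automatically a symplectic submanifold of $(X, \omega_X)$.

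Finally I would upgrade this family of symplectic submanifolds to an ambient symplectic isotopy by the standard symplectic isotopy extension (Moser) argument: the symplectic neighborhood theorem supplies compatible tubular neighborhoods along the family, and a Moser-type time-dependent vector field integrates the deformation $\dot{C}_t$ to a path of ambient symplectomorphisms $\psi_t$ with $\psi_0 = \mathrm{id}$ and $\psi_t(H_1) = C_t$. In particular $\psi_1(H_1) = H_2$, which is precisely the assertion that $H_1$ and $H_2$ are symplectically isotopic.

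The hard part will be the connectivity step, namely verifying that the smooth locus $|L| \setminus \Delta$ is path-connected. This rests on $\Delta$ being a proper complex-analytic (equivalently algebraic) subvariety, so that it has real codimension at least two and removing it from $\p^N$ preserves connectivity. This is exactly where the projectivity of $X$ together with the smoothness of $H_1$ and $H_2$ enters essentially: without an algebraic background the members of a ``linear system'' need not sweep out a projective space, and without smoothness of the endpoints the codimension bound could degenerate. Once connectivity is secured, the passage from a smooth family of K\"{a}hler submanifolds to a genuine ambient symplectic isotopy is routine.
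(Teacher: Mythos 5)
Your proposal is correct and follows essentially the same route as the cited proof of \cite[Lemma 9.6]{Cho}: the vanishing of $H^1(X,\mcal{O}_X)$ upgrades homological to linear equivalence via the exponential sequence, both curves then sit in $|L| \cong \p^N$, the singular members form a proper Zariski-closed subset whose complement is path-connected (real codimension at least two), and a path of smooth members gives a family of complex, hence $\omega_X$-symplectic, curves realizing the isotopy. Your final Moser/isotopy-extension step producing an ambient symplectic isotopy is a standard strengthening consistent with the paper's use of the lemma.
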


	Now we are ready to prove Theorem \ref{theorem_main}
	
	\begin{proof}[Proof of Theorem \ref{theorem_main}]
		Let $(M,\omega)$ be a six-dimensional closed monotone symplectic manifold with $c_1(TM) = [\omega]$. Also assume that $(M,\omega)$ admits a semifree Hamiltonian circle action 
		with the balanced moment map $H : M \rightarrow \R$. By Table \ref{table_list}, we know that any reduced space is either 
		\[
			\p^1 \times \p^1, \quad \p^2 \# k\overline{\p^2}, \quad \quad k \leq 4
		\]
		and hence is symplecticaly rigid by Theorem \ref{theorem_uniqueness} and Theorem \ref{theorem_symplectomorphism_group}. 
		Moreover, we also know that there exists a smooth Fano 3-fold admitting semifree holomorphic Hamiltonian $S^1$-action whose topological fixed point data 
		equals $\frak{F}_{\mathrm{top}}(M,\omega,H)$. 
		So, it remains to show that $\frak{F}_{\mathrm{top}}(M,\omega,H)$ determines $\frak{F}(M,\omega,H)$ uniquely.
		
		By Theorem \ref{theorem_Z}, we may assume that every $(M_c, \omega_c, Z_c) \in \frak{F}(M,\omega,H)$ is an algebraic tuple, that is, 
		$Z_c$ is a complex (and hence K\"{a}hler) submanifold of $M_c$ for every critical value $c$ of the balanced moment map $H$. 
		Moreover, since any reduced space is birationally equivalent to $\p^2$, we see that $H^1(M_c, \mcal{O}_{M_c}) = 0$ and therefore we may apply Lemma \ref{lemma_isotopic}
		so that $(M_c, \omega_c, Z_c)$ is equivalent to the fixed point data $(X_c, (\omega_X)_c, (Z_X)_c)$ of $X$ at level $c$. 
		This completes the proof.
	\end{proof}

\bibliographystyle{annotation}

\end{document}